\documentclass[11pt]{amsart}
\usepackage[margin=1.5in]{geometry}
\usepackage{mathrsfs}
\usepackage{amsfonts}
\usepackage{latexsym,epsfig}
\usepackage{mathabx}
\usepackage{amsmath, amscd, amsthm,amssymb}
\usepackage[pdftex]{color}
\usepackage{comment}
\usepackage{marginnote}
\usepackage[colorlinks,citecolor=blue,linkcolor=red]{hyperref}
\usepackage[nameinlink]{cleveref}
\usepackage{enumitem}
\usepackage{enumitem}

\newcommand{\ZZ}[0]{\mathbb{Z}}

\newcommand{\wt}{\widetilde}
\newcommand{\mc}{\mathcal}

\newcommand{\wh}{\widehat}

\newcommand{\mr}{\mathring}

\usepackage{todonotes}

\usepackage{hyperref}

\usepackage{graphicx}
\usepackage{pinlabel}

\newtheorem{theorem}{Theorem}[section]
\newtheorem{lemma}[theorem]{Lemma}
\newtheorem{proposition}[theorem]{Proposition}
\newtheorem{corollary}[theorem]{Corollary}

\newtheorem{claim}{Claim}
\newtheorem{definition}[theorem]{Definition}

\theoremstyle{definition}

\newtheorem{remark}[theorem]{Remark}
\newtheorem{convention}[theorem]{Convention}

\newcommand{\FF}{\mathcal{F}}

\newcommand{\Aut}{\mathrm{Aut}}

\newcommand{\orb}{\mathcal{O}}

\newcommand\tsim{\kern-.4em\sim}

\newcommand\ssm{\smallsetminus}

\renewcommand{\phi}{\varphi}
\renewcommand{\epsilon}{\varepsilon}

\title[Pseudo-Anosov orbit spaces via bifoliated planes]{A characterization of pseudo-Anosov orbit spaces via bifoliated planes}
\author{Kathryn Mann and Samuel J. Taylor}

\begin{document}

\begin{abstract}
We characterize the actions on bifoliated planes that arise as orbits spaces of transitive pseudo-Anosov flows on orientable closed $3$-manifold. We use branched covers, veering triangulations, and a new compactness criterion to extend previous work that handled the special case where the plane has no odd-prong singularities and the action preserves a leafwise orientation on the foliations.
\end{abstract}

\maketitle

\setcounter{tocdepth}{1}
\tableofcontents

\section{Introduction}
A {\em bifoliated plane} is a plane $\mc P$ together with a pair of topologically transverse foliations $\FF^h, \FF^v$, possibly with isolated prong singularities.  See \Cref{sec_prelim} for a formal description.  
We assume throughout this work that bifoliated planes have no {\em infinite horizontal rectangles}, that is, no properly embedded subset isomorphic to $[0,\infty) \times [0, 1]$ with its horizontal and vertical product foliation.  
When we say a group $G$ acts on $(\mc P, \FF^h, \FF^v)$, we mean that it acts by homeomorphisms of $\mc P$ preserving the two foliations.  

The main motivating example is the {\em orbit space} of a pseudo-Anosov flow on a compact $3$--manifold $M$.  This space is a bifoliated plane with an action of $\pi_1(M)$, which Barbot \cite{barbot1995caracterisation} showed completely determines the flow up to orbit equivalence.  Thus, such actions have become a key tool in the study of pseudo-Anosov flows.  However, Barbot's proof that the action determines the flow is non-constructive.  The {\em reconstruction problem} asks what dynamical conditions on a group action precisely characterize the orbit space actions of such pseudo-Anosov flows.  
This has been partially answered in recent work of Barthelm\'e--Fenley--Mann \cite{BFM25} and Baik--Wu--Zhao \cite{baik2024reconstruction}, however both papers needed to assume a leafwise orientation condition that excluded planes from having odd degree prongs.   

In this work, we treat the general case, giving a complete dynamical characterization of actions 
that come from orbit spaces of transitive flows on compact, orientable\footnote{for simplicity, we restrict to the case where manifolds are orientable, however the same tools should apply in the nonorientable setting}
$3$-manifolds, and describe a reconstruction process independent from Barbot's theorem.

\begin{theorem} \label{th:main}
Suppose a group $G$ acts on a
bifoliated plane $(\mc P, \FF^h, \FF^v)$. There exists a closed, oriented 3-manifold $M$ with $\pi_1(M) =G$ supporting a transitive pseudo-Anosov flow $\phi$ whose orbit space $\orb_\phi$ is equivariantly isomorphic to $(\mc P, \FF^h, \FF^v)$ if and only if $G$ is torsion-free, the action $G \curvearrowright \mc P$ is transitive, preserves an orientation, and has:
\begin{itemize}
\item the closing property,
\item uniformly hyperbolic fixed points, and 
\item the finite rectangle condition.
\end{itemize}
\end{theorem}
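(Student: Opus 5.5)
The forward direction is essentially known, and I would dispatch it first. If $\phi$ is a transitive pseudo-Anosov flow on a closed oriented $M$, then $\pi_1(M)$ is torsion-free, since $M$ is aspherical: its universal cover is $\mathbb{R}^3$, fibered by the lifted flow over $\orb_\phi$. The action on $\orb_\phi$ preserves orientation because $M$ and the flow direction are oriented. It is transitive in the sense of having a dense orbit of leaves or points, because $\phi$ is transitive. The closing property and the uniformly hyperbolic fixed points come from periodic orbits and the local hyperbolicity of the flow near closed orbits, exactly as in \cite{BFM25}. The finite rectangle condition is a compactness statement, which I would deduce from compactness of $M$ via Markov partitions or the standard rectangle arguments.

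For the converse, I would reduce to the orientable case treated in \cite{BFM25} and \cite{baik2024reconstruction}. The obstruction to leafwise orientability is the presence of odd-prong singularities, together with elements that reverse leaf orientations.
\begin{enumerate}
\item Choose a finite-index subgroup $G_0 \le G$ preserving the transverse orientations where possible; this is torsion-free and has finite index.
\item Construct a branched double cover $\wh{\mc P} \to \mc P$ branched over the $G_0$-orbits of odd-prong singular points. Lift the bifoliation so that every prong becomes even, and lift the action to an extension $\wh G$ of $G_0$ by $\ZZ/2$, passing to an index-two subgroup so that leafwise orientations are preserved.
\item Verify that the hypotheses (closing, uniformly hyperbolic fixed points, finite rectangles, transitivity, no infinite horizontal rectangles) survive passage to the branched cover.
\item Apply the earlier reconstruction theorem to get a manifold $\wh M$ with a pseudo-Anosov flow whose orbit space is $\wh{\mc P}$.
\item Descend to $M$, a closed manifold with $\pi_1(M) = G$, by quotienting by the deck involution and then by $G/G_0$. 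The branch locus becomes a union of closed orbits, and the branched cover of $M$ is undone by a Dehn-filling-type modification.
\end{enumerate}

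That descent is awkward. My preferred alternative is to build $M$ directly. I would remove the singular orbits, use the action of $G$ on the complement of the singular points together with the rectangle structure to build a veering triangulation on a cusped manifold $N$ in the style of Agol–Guéritaud and Schleimer–Segerman, and then obtain $M$ by Dehn filling along the singular orbits. Torsion-freeness and orientation ensure the quotient is a manifold. The flow would come from the Landry–Minsky–Taylor or Schleimer–Segerman correspondence between veering triangulations and flows.

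I expect the main obstacle to be compactness: showing that $(\mc P \ssm \text{sing}) \times \mathbb{R}$ modulo $G$, or the triangulation quotient, has only finitely many tetrahedra, so that $N$ is of finite type and the filled $M$ is closed. My first attempt would be to prove a new compactness criterion showing that the finite rectangle condition bounds the number of $G$-orbits of maximal rectangles, which correspond to tetrahedra. I would then use uniform hyperbolicity at the fixed points to control the cusps, and finally check that $\pi_1(M) = G$ and that the orbit space of the resulting flow is equivariantly $\mc P$, via the universality of the veering construction.
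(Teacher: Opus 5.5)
Your forward direction agrees with the paper's: Markov partitions give the finite rectangle condition, and closing and uniform hyperbolicity come from \cite{BFM25}. The converse has a genuine gap, and it sits in your preferred direct veering route. The Agol--Gu\'eritaud construction needs a $G$-invariant, closed, discrete set $\kappa\subset\mc P$ with the following properties: it consists of finitely many $G$-orbits, those orbits have infinite cyclic stabilizers, and $\mc P$ has no perfect fits relative to $\kappa$. Maximal rectangles are exactly those with a point of $\kappa$ on each side. Removing only the singular orbits does not achieve this. If $\mc P$ has no singularities (e.g.\ a suspension Anosov flow), there are no maximal rectangles at all. Even with singularities, there are typically perfect fit rectangles meeting no singular point. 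The paper gets such a $\kappa$ from the theorem of Fried and Brunella (see also Tsang) that finitely many closed orbits kill all perfect fits. That theorem requires an actual transitive pseudo-Anosov flow on a closed manifold, so you need a flow before you can build the triangulation. Likewise, finiteness of $G$-orbits of maximal rectangles is left as a hoped-for criterion with no argument. In the paper, both inputs are outputs of the branched-cover route, not alternatives to it.

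That branched-cover route, as you wrote it, also needs repair at three points.

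\emph{The cover is not a plane.} The set $O$ of odd prongs is infinite and discrete, so a double cover of $\mc P$ branched over $O$ is an infinite-genus surface. One must instead take the universal orbifold cover $\mc P_\ast$ with order-$2$ cone points at $O$. Its deck group $H$ is a free product of copies of $\ZZ/2$, so the lifted group $G_\ast$ is an extension of $G$ by $H$, not by $\ZZ/2$. You must then check that the index-two, leafwise-orientation-preserving subgroup $G_\ast^+$ is torsion-free: all torsion lies in $H$ and reverses leaf orientation.

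\emph{Compactness is missing.} \cite{BFM25} only gives an expansive flow on a possibly noncompact $W_h/G_\ast^+$. The paper's new compactness theorem is that the finite rectangle condition plus uniform hyperbolicity let one cover $W_h$ by translates of finitely many closed good rectangle neighborhoods. Forward orbits are handled with towers of rectangles above, and backward orbits with proper discontinuity. This, not counting tetrahedra, is where compactness is proved.

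\emph{There is no direct descent.} $G_\ast^+\to G$ has infinite kernel, and elements of $G_\ast\ssm G_\ast^+$ do not act on $W_h$. The paper instead works as follows.
\begin{enumerate}
\item It takes the flow $\psi$ on the closed manifold $N$ with $\pi_1(N)=G_\ast^+$ and uses it to get $\kappa_\ast$.
\item It projects $\kappa_\ast$, together with the singular points, to $\kappa\subset\mc P$.
\item It observes that $\wt V/G$ is double covered by the veering triangulation $\wt V_\ast/G_\ast^+$ of $\psi$. This gives finitely many tetrahedra and torus cusps.
\item It fills along the core curves of the cusp cylinders of $\wt V$, so that $\pi_1(M)=G$. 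It invokes Agol--Tsang, which applies because $\langle s_T,\mc L_T\rangle$ equals the prong number, which is at least $2$.
\item It proves, via dual lines in $\wt\Gamma$, that a veering triangulation determines its bifoliated plane. This is applied to $\mc P$ relative to $\kappa$ and to $\orb_\phi$ relative to the filling cores, which yields the conjugacy.
\end{enumerate}
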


Here, \emph{transitivity} of the action $G \curvearrowright \mc P$ refers to topological transitivity, i.e. there exists a point whose $G$--orbit is dense.

\smallskip
The closing property, uniformly hyperbolic fixed points, and finite rectangle condition are dynamical conditions capturing properties satisfied by all pseudo-Anosov flows, inspired by (weak versions of) the closing lemma, hyperbolicity, and having a finite cover by flow-boxes.  See \Cref{sec_compactness} for the definitions. 
The torsion-free condition on $G$ is necessary to ensure $M$ is a manifold, because there are orbifold groups with actions satisfying the other conditions.

\smallskip
There is another closely related context where reconstruction problems for bifoliated planes have received considerable recent attention, and this is the setting of loom spaces and veering triangulations.  As part of their program to give a precise correspondence between pseudo-Anosov flow without perfect fits and certain ideal triangulations of $3$-manifolds, called \emph{veering triangulations}, Segerman and Schleimer explain how to start with an action on a loom space (a special type of singularity-free bifoliated plane), build a {veering triangulation} of a $3$-manifold with torus cusps, and from there produce pseudo-Anosov flows without perfect fits on a Dehn filled closed manifold (\cite{schleimer2024loom, schleimer2019veering, SS3}). See also \cite[Section 6]{BFM25} where a different approach is given to construct an expansive flow directly on the cusped manifold.

Our work relates these two different frameworks, using veering triangulations as a tool towards the proof of \Cref{th:main}, and describing the relationship between the reconstruction strategy of \cite{BFM25}, which builds ``coordinates" for a 3-manifold $M$ out of data of a bifoliated plane (under a leafwise orientability condition), and of \cite{schleimer2024loom, LMT21} which builds an ideal triangulation of a $3$-manifold out of {\em maximal rectangles} in the bifoliated plane. 

To relate these two approaches, we also introduce the tool of branched covers of bifoliated planes. This can be interpreted as an approach to Fried surgery, a flow preserving version of Dehn surgery along closed orbits \cite{fried1983transitive}, solely from the perspective of the associated action on the orbit space of the flow. Since Fried surgery plays an important role in the general theory, we believe this perspective will have future applications.

\subsection{Outline and strategy of proof} 
\Cref{sec_prelim} describes a framework to study branched covers of bifoliated planes, defines the key properties from \Cref{th:main}, and shows that these pass naturally to $G$--compatible normal branched covers and to finite index subgroups.  

In \Cref{sec_compactness}, without assuming transitivity, we show the following:  
\begin{theorem} \label{thm_compactness_no_transitive}
Suppose the torsion-free group $G$ acts on bifoliated plane $(\mc P, \FF^h, \FF^v)$ preserving orientations along the leaves.  Such an action comes from 
a  pseudo-Anosov flow on a compact 3-manifold if and only if it satisfies: 
\begin{itemize}
\item the closing property,
\item uniformly hyperbolic fixed points, and 
\item the finite rectangle condition.
\end{itemize}  
\end{theorem}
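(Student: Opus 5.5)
The forward direction is the easy half. I would check directly that the orbit space action of $\pi_1(M)$ for a pseudo-Anosov flow $\phi$ on a compact $M$ has each of the three properties.
\begin{itemize}
\item The \emph{closing property} follows from the Anosov closing lemma for pseudo-Anosov flows. An element mapping a small rectangle nearly back over itself corresponds to a long orbit segment returning close to its starting point, which is shadowed by a periodic orbit; that orbit gives a fixed point in the rectangle.
\item \emph{Uniformly hyperbolic fixed points} follow from the uniform contraction and expansion along stable and unstable leaves of the periodic orbit, seen on the orbit space.
\item The \emph{finite rectangle condition} follows from compactness of $M$: finitely many flow boxes cover $M$, and their projections to $\orb_\phi$ give finitely many rectangles whose $\pi_1(M)$--translates cover the plane and satisfy the required local finiteness.
\end{itemize}
Torsion-freeness of $\pi_1(M)$ and orientation-preservation are automatic in the setting stated.

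For the converse I would follow the reconstruction strategy of \cite{BFM25}. That strategy requires leafwise orientability and no odd prongs, so I would first reduce to that case using the branched-cover framework of \Cref{sec_prelim}.

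\emph{Step 1 (orientation cover).} Pass to a $G$--compatible normal branched cover $\wh{\mc P}\to\mc P$, branched over the $G$--orbits of odd-prong singularities. The cover should unwrap odd prongs into even prongs and make both foliations orientable. Simultaneously pass to the extension $\wh G$ acting on $\wh{\mc P}$, followed by a finite index subgroup preserving leaf orientations. The earlier results show that the closing property, uniform hyperbolicity, and the finite rectangle condition pass to such covers and subgroups, so the hypotheses are preserved.

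\emph{Step 2 (BFM reconstruction).} Apply the construction of \cite{BFM25} to $\wh G\curvearrowright\wh{\mc P}$. Build the space $\wh{\mc P}\times\mathbb R$, or the space of ``coordinates'' given by pairs of nested rectangles or intersecting leaf pairs. Let the group act on it freely, using torsion-freeness and uniform hyperbolicity to rule out fixed points with bounded orbits, and let the flow act by the $\mathbb R$ factor. The key point is proving that the quotient is a compact $3$-manifold carrying a pseudo-Anosov flow. Properness of the action and cocompactness should follow from the finite rectangle condition: finitely many rectangle-boxes $R\times[0,1]$ form a compact fundamental domain modulo the flow direction. Uniform hyperbolicity together with the closing property supplies the expansivity that \cite{BFM25} uses.

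\emph{Step 3 (descend).} The manifold $\wh M$ carries a flow, with branching orbits corresponding to the lifted odd prongs, and a finite group action permuting them. The quotient by this finite group is a branched quotient along closed orbits. Fried surgery reverses the branching, and recovers a flow on a manifold $M$ with $\pi_1(M)=G$ whose orbit space is $\mc P$.

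The main obstacle is Step 3, together with making Step 1 genuinely normal and $G$--compatible. The quotient is a priori an orbifold or a branched object, and both its manifold structure and the identification $\pi_1(M)=G$ must come from torsion-freeness. If this route fails, I would instead prove compactness directly for $G\curvearrowright\mc P$. That would mean showing that the flow space built from maximal rectangles or a veering-type decomposition is cocompact under the finite rectangle condition, which avoids the need to descend.
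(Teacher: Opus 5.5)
\textbf{Gap: the cocompactness step, which is the whole content of the `if' direction, is asserted rather than proved.}

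The hypothesis already says $G$ preserves orientations along the leaves, and this rules out odd prongs. So your Steps~1 and~3 are vacuous for this statement, and the ``main obstacle'' you name is not part of it. Step~3 would also fail as written for the general theorem. A planar branched cover that unwraps infinitely many odd prongs has infinite deck group, so no finite quotient recovers $G$; the paper handles that case through veering triangulations instead.

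The real work is in Step~2. One works with the space $W_h=\{(x,t): t\in\FF^h_>(x)\}$ of \cite{BFM25}, modified at prongs; defining it is exactly where the leafwise orientation is used. Freeness and proper discontinuity come from the closing property and uniformly hyperbolic fixed points, not from the finite rectangle condition. \cite{BFM25} then gives an expansive flow on $W_h/G$, which is pseudo-Anosov once $W_h/G$ is compact \cite{inaba1990nonsingular, paternain1993expansive}. So everything hinges on cocompactness. Your sentence that boxes $R\times[0,1]$ form a compact fundamental domain ``modulo the flow direction'' does not supply it: modulo the flow direction is just the first bullet of the finite rectangle condition. Each orbit $\{x\}\times\FF^h_>(x)$ is a line, and you give no reason why both of its ends are covered by translates of finitely many compact boxes.

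\textbf{The missing idea} is how the above/below clauses, together with uniform hyperbolicity, control the flow direction.
\begin{enumerate}
\item Pick companion rectangles $R'$ so that $(R\times R')\cap W_h$ is compact.
\item Iterate the above-covers over a point $p\in R$ and pigeonhole, so that one $R_i$ recurs in a tower. Uniform hyperbolicity shrinks the $\FF^v$-saturations to $\FF^v(p)$. This produces translates $aR_i\ni p$ above $R$ with $aR_i'$ on the negative side of $R'$, i.e.\ closer to $p$ along $\FF^h(p)$.
\item By compactness of $R$, finitely many such translates suffice. Add the finitely many gap boxes $(A\times A')\cap W_h$, where $A'$ is the region between $aR_i'$ and $R'$.
\item Iterate this, with uniform hyperbolicity pinching $a_0\cdots a_nR_n'$ onto $\FF^v(p)$. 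This covers the whole upward ray.
\item For the downward ray, use the below-covers plus proper discontinuity. Distinct translates $g_nR_i\ni p$ force $g_nR_i'$ to escape toward the positive end of $\FF^h(p)$.
\end{enumerate}

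\textbf{Gap in the `only if' direction.} The finite rectangle condition is not a local-finiteness property, and an arbitrary flow-box cover does not give it. Suppose $aR_i$ is above $R$ and contains a point of a vertical side of $R$. Then $\FF^v(aR_i)\subseteq\FF^v(R)$ forces $aR_i$ to have a vertical side on that same leaf. Iterating and pigeonholing shows that this leaf is invariant under a nontrivial element of $G$. So the rectangles must have periodic sides, and the above/below covers must actually be constructed. The paper gets both from a Markov partition with periodic boundary and its first return map.
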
 
This builds on the work of \cite{BFM25}, which did not give a criterion for compactness, but instead discussed expansive flows on (possibly noncompact) 3-manifolds. 

In \Cref{sec:veer}, we relate the framework of \cite{BFM25} to veering triangulations, 
and show how to build a $3$--manifold with $\pi_1(M) = G$ together with a transitive pseudo-Anosov flow
 out of the data of a transitive action of 
$G$ on a bifoliated plane.  Here we assume that the action preserves an orientation on $\mc P$ but do not require a leafwise orientation.  

In \Cref{sec:same_action}, we describe a correspondence between points in the bifoliated plane and transverse paths through a veering triangulation (statement of \Cref{prop:recovering}), which allows us to show that the flow constructed in \Cref{sec:veer} has orbit space equivariantly isomorphic to  $(\mc P, \FF^h, \FF^v)$. This establishes the `if' direction of \Cref{th:main}. The easier `only if' direction is a consequence of a more general statement given in \Cref{prop:only_if}.

\subsection*{Acknowledgements}
We thank Thomas Barthelm\'e and Chi Cheuk Tsang for helpful comments on an earlier draft of the paper.

This research was supported by the National Science Foundation under Grant No. DMS--2424139, while the authors were in residence at the Simons Laufer Mathematical Sciences Institute in Berkeley, California, during the Spring 2026 semester.

Mann was also partially supported by NSF grant  DMS-2505228.
Taylor was also partially supported by NSF grant DMS--2503113 and the Simons Foundation.

\section{Dynamics on bifoliated planes and branched covers} \label{sec_prelim} 

In this section we introduce branched covers of bifoliated planes and discuss how various properties of actions on bifoliated planes can lift through branched covers. 

We begin by setting some conventions and standing assumptions to be used throughout this work.  
A {\em bifoliated plane} is a topological plane $\mc P$ with two transverse, possibly singular, foliations $\FF^h$ and $\FF^v$.  We call $\FF^h$ the {\em horizontal foliation} and $\FF^v$ the {\em vertical foliation}.  Singularities are required to be of {\em prong type}; this means they are locally modeled on the image of the horizontal and vertical coordinate foliations of $\mathbb{R}^2 \cong \mathbb{C}$ at 0 under the semi-branched cover $z \mapsto z^{k/2}$ for some $k \geq 2$, and each leaf may contain at most one singular point.  An \emph{automorphism} of $\mc P$ is a homeomorphism of $\mc P$ preserving each foliation and we denote the group of automorphisms by $\Aut(\mc P)$.

We also make the standing assumption that $(\mc P, \FF^h, \FF^v)$ has no horizontal
{\em infinite product regions}, that is, there is no subset of $\mc P$ properly homeomorphic to $[0, \infty) \times [0, 1]$ under a map sending leaves $\{p\} \times [0,1]$ to vertical leaves and $[0, \infty) \times \{p \} $ to horizontal leaves. Moving forward, we will often refer to the bifoliated plane $(\mc P, \FF^h, \FF^v)$ by simply $\mc P$. 

The $\FF^{h/v}$ {\em saturation} of a subset $A$, denoted $\FF^{h/v}(A)$, is the union of all $\FF^{h/v}$-leaves intersecting $A$.  By convention, when we write $\FF^{h/v}$ in a statement we mean that the statement is true when all instances of $\FF^{h/v}$ are replaced by $\FF^h$, and when all are replaced by $\FF^v$.

\smallskip
Finally, we will make frequent use of {\em rectangles} in $\mc P$.  For our purposes, a closed rectangle is a compact subset $R \subset \mc P$ isomorphic (as a bifoliated set) to $[0, 1]^2$ with its coordinate foliations.    
Throughout the paper, all rectangles will be closed unless otherwise stated.

\subsection{Branched covers}
\label{sec:branched}
Suppose that $\mc Q$ is a bifoliated plane and that a subgroup $H \le \Aut(\mc Q)$ acts properly discontinuously on $\mc Q$
so that the quotient $\mc P = \mc Q / H$ is itself a bifoliated plane, when considered with the foliations induced from $\mc Q$.
Then we call the quotient map $\pi \colon \mc Q \to \mc P$ a \textit{normal branched cover} of bifoliated planes. The branch points of $\mc Q$ are exactly the points fixed by nontrivial elements of $H$. 

If $G \curvearrowright \mc P$ is a group acting on $\mc P$ by automorphisms, we say that the normal branched cover $\pi \colon \mc Q \to \mc P$ is \emph{$G$--compatible} if each $g \in G$ admits a lift to an automorphism of $\mc Q$. In this case, the coset $gH$ accounts for the full set of lifts of $g$ to $\mc Q$.
Hence, if $\wh G$ denotes the group of all lifts of $g \in G$, there is an exact sequence:
\[
1 \to H \to \wh G \to G \to 1.
\]

Since $\mc P = \mc Q / H$ inherits a natural orbifold structure, the $G$-compatible branched covers of $\mc P$ arise in the following way. Let $G \curvearrowright \mc P$ be an action by automorphisms. Let $O$ be any $G$--invariant, closed, discrete subset of $\mc P$, and assign a positive integer to each point of $O$ so that this labeling of points in $O$ is invariant under the action of $G$. Now if we consider $\mc P$ as an orbifold with cone points at $O$ such that the order of $o \in O$ is given by its label, then we can take $\pi \colon \mc Q \to \mc P$ to be the associated universal orbifold cover. This is a normal orbifold cover with deck group $H$, and we can pull the foliations of $\mc P$ back to foliations on $\mc Q$. 
With this structure, $\pi \colon \mc Q \to \mc P$ is a normal branched cover of bifoliated planes. Moreover, since $G$ acts on $\mc P$ preserving its orbifold structure by construction, each $g \in G$ lifts to $\mc Q$ and so the map $\pi \colon \mc Q \to \mc P$ is also $G$--compatible. 

\begin{remark}
For those unfamiliar with orbifold covers, $\pi \colon \mc Q \to \mc P$ can also be constructed as follows. Consider the punctured plane $\mc P \ssm O$ and let $K$ be the normal subgroup of $\pi_1(\mc P \ssm O)$ generate by, for each $o \in O$ with label $k$, the loops $\gamma_o^k$. Here, $\gamma_o$ is the boundary of a disk in $\mc P$ with $o$ in its interior and not meeting any other points of $O$. Then the honest cover $\pi \colon \mc Q \ssm \pi^{-1}(O) \to \mc P \ssm O$ induced by the orbifold cover $\pi$ 
 is exactly the  
 covering space of $\mc P \ssm O$ associated to the subgroup $K$ with deck group $H = \pi_1(\mc P \ssm O) /K$. From this description, it is easy to construct $\pi \colon \mc Q \to \mc P$ directly.
\end{remark}

\subsection{Finite rectangles, uniform hyperbolicity, and closing}  
We begin with a fundamental relation between rectangles in $\mc P$.

\begin{definition}[Above/below in $\mc P$] \label{def:above} 
Let $R_1$ and $R_2$ be rectangles in $\mc P$.  We say $R_2$ is \emph{above} $R_1$ if the $\FF^v$-saturation
$R_2$ is strictly contained in the $\FF^v$-saturation of $R_1$, and the $\FF^h$-saturation of $R_1$ is strictly contained in that of $R_2$.  
Say $R_1$ is \emph{below} $R_2$ if $R_2$ is above $R_1$. 
\end{definition} 

Note that ``above" and ``below'' are transitive relations: if $R_3$ is above $R_2$ and $R_2$ above $R_2$, then $R_3$ is above $R_1$. As an alternative equivalent characterization, the rectangle $R_2$ is above $R_1$ if and only if each vertical segment of $R_2$ properly contains a vertical segment of $R_1$ and each horizontal segment of $R_1$ properly contains a horizontal segment of $R_2$.

The motivation for the definition is as follows:  
Suppose $\wt \phi$ is a lift of a pseudo-Anosov flow on a compact manifold $M$ to the universal cover $\wt M \cong \mathbb{R}^3$. We think of $\wt \phi$ as being oriented ``upwards."
Suppose $R_1$ is a local transversal to $\wt \phi$,  foliated by the unstable ($\FF^h$) and stable ($\FF^v$) foliations, and $R_2$ is a isometric image of $R_1$ situated above $R_1$ on $\wt M$, so flowing forward in time gives a (partially defined) map from $R_1$ to $R_2$.   The contracting property of $\phi$ along stable leaves, and contraction in the past along unstable leaves, implies that the projection of $R_1$ and $R_2$ to the orbit space will -- if $R_2$ and $R_1$ are centered about the same orbit, and $R_2$ is sufficiently far above $R_1$ in $\wt M$ -- be in the configuration described by \Cref{def:above}. 

 \begin{figure}
   \labellist 
  \small\hair 2pt
     \pinlabel $R_1$ at 100 100
     \pinlabel $R_2$ at 100 142
    \pinlabel $\wt M^3$ at 230 100
     \pinlabel $\mc P$ at 230 30
      \pinlabel $R_2$ at 60 7
     \pinlabel $R_1$ at 10 25
 \endlabellist
\includegraphics[width=7cm]{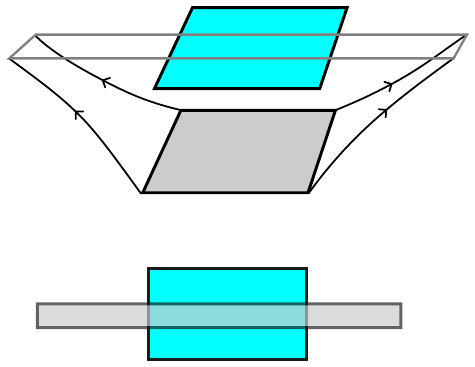}
\caption{Contraction along stable leaves means isometric translates of rectangles above or below have intersections in the orbit space $\mc P$ as in \Cref{def:above}}
 \label{fig_above} 
\end{figure}

\begin{definition}[Finite rectangle condition] \label{def:rect}
Let $G$ be a group acting on $\mc P$ by automorphisms.   We say that this action has the \emph{finite rectangle condition} if there exists a finite family $\mc R$ of closed rectangles in $\mc P$ such that: 
\begin{itemize}
\item $\mc P = \bigcup_{g \in G; R \in \mc R} g R$, i.e. translates of rectangles in $\mc R$ cover $\mc P$, 
\item each $R \in \mc R$ is covered by a finite collection $a_i R_i$ with $R_i \in \mc R$ and $a_i \in G$ each of which is above  $R$, and 
\item each $R \in \mc R$ is covered by a finite collection $b_j R_j$ with $R_j \in \mc R$ and $b_j \in G$ each of which is below $R$.
\end{itemize}
\end{definition} 

In the context of flows, this is reminiscent of the existence of a Markov partition, but much weaker than that because we allow the rectangles in our cover to overlap.  It more closely parallels the simple fact that a compact manifold with a flow can be covered by finitely many flow-box neighborhoods.   Compare also Iakovoglou's definition of {\em Markovian family} \cite{iakovoglou2025markovian} and the compactness conditions given by Remark 7.7 and Construction 5.10 of \cite{baik2024reconstruction}.  

\smallskip
We recall also two conditions from \cite{BFM25}. 
These are orbit-space formulations of (weak versions of) the fact that only finitely many periodic orbits of bounded length can meet a compact set, and the property given by the Anosov closing lemma.  

First, recall that a point $x \in \mc P$ fixed by $g \in G$ is \emph{hyperbolic} if $g$ acts as a topological contraction on one of the leaves through $x$ and as an expansion on the other. The action has \emph{hyperbolic fixed points} if this is the case for any fixed point of a nontrivial $g \in G$. The following property is a mild strengthening of this condition.

\begin{definition}[Uniformly hyperbolic fixed points]  \label{def:uniformlyhyp}
The action $G \curvearrowright \mc P$ has \emph{uniformly hyperbolic fixed points} if fixed points are hyperbolic, 
and if for any compact rectangle $R$ and any sequence $(g_n)_{n\ge 1}$ from $G$, if $g_n(R)$ is always above $g_{n-1}(R)$ (resp. $g_n(R)$ below $g_{n-1}(R))$ then $\bigcap_n \FF^v(g_n(R))$ (resp. $\bigcap_n \FF^h(g_n(R))$) is a single leaf. 
\end{definition}

The condition that $\bigcap_n \FF^v(g_n(R))$ is a single leaf (under the hypothesis that $g_n(R)$ is always above $g_{n-1}(R)$)  is equivalent to the more local condition that $\bigcap_n g_n(R)$ is a vertical segment of the rectangle $R$. 

\begin{remark} \label{rem:uniform_hyp_difference}
This formulation is a priori slightly stronger than the condition given in \cite[Definition 1.9]{BFM25}, where it is required that rectangles do not have overlapping sides, asking that $\FF^v (g_{n-1}\mathring{R}) \supset \FF^v(g_{n}R)$ rather than that 
$g_n(R)$ is above $g_{n-1}(R)$ (and analogously for $\FF^h$-saturations and belowness).  However, these conditions are in fact equivalent: if $g_n(R)$ is above $g_{n-1}(R)$ but they share a side for all, or infinitely many $n$, then this side contains a fixed point of $g_n g_{n-1}^{-1}$.  Hyperbolicity of fixed points now 
implies that either a slight enlargement $R'$ of $R$ will have the property that $\FF^v (g_{n-1}\mathring{R'}) \supset \FF^v(g_{n}R')$ for all $n$, or that the fixed point is singular. In the latter case, the conclusion follows because stabilizers of singular points are infinite cyclic (\cite[Lemma 4.13]{BFM25}).
\end{remark}

Finally, we turn to the closing property:
\begin{definition}[Closing property] \label{def:closing}
The action $G\curvearrowright \mc P$ has the \emph{closing property} if each point $x$ has two neighborhood bases $\{U_i\}$, $\{V_i\}$ with $V_i \subset U_i$
such that: 
\begin{itemize}
\item If $x$ is nonsingular and $g (V_i) \cap V_i  \neq \emptyset$, then $g$ fixes a point in $U_i$. 
\item If $x$ is singular, and $C_i$ is a connected component of $V_i \ssm (\FF^{h}(x) \cup \FF^{v} (x))$ with $g(C_i) \cap C_i \neq \emptyset$, then $g$ fixes either $x$, a point either in the component of $U_i \ssm (\FF^{h}(x) \cup \FF^{v} (x))$ containing $C_i$, or in an adjacent connected component.
\end{itemize}   
\end{definition}
This notion appears in \cite{barthelme2025pseudo}.  A slight variation of it was presented in \cite{BFM25} under the same name; however this one is more natural from the perspective of flows, and the results of \cite{BFM25} carry over unchanged with this replacement. 
Translated to the language of flows, the closing property says that a nearby first return of an orbit (nearby meaning to a same quadrant, in the singular case) implies the existence of a nearby periodic orbit, which is the essence of the closing lemma.

\smallskip

Our next goal is to show that the finite rectangle condition, uniform hyperbolicity, and closing, pass naturally to branched covers.  For this, we need the following subdivision lemma: 

\begin{lemma}[Subdividing rectangles] \label{lem_cut_rectangles_2}
Suppose that the action $G \curvearrowright \mc P$ has hyperbolic fixed points and the finite rectangle condition with respect to the collection $\mc R$. Let $O$ be any $G$--invariant, closed, discrete subset of $\mc P$, and 
let $\mc R'$ denote the collection of rectangles obtained by subdividing each rectangle $R \in \mc R$ along the horizontal segments through the points of $O \cap R$.  Then $\mc R'$ also satisfies the finite rectangle condition and has the property that each rectangle of $\mc R'$ meets $O$ only along its horizontal sides.
\end{lemma}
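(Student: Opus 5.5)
Most of the statement is bookkeeping; the real content is re-establishing the ``above'' and ``below'' covers for the pieces.

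\textbf{Finiteness, the side condition, and covering.} Since $O$ is closed and discrete and each $R\in\mc R$ is compact, $O\cap R$ is finite. So each $R$ is cut into finitely many horizontal strips $R^{(1)},\dots,R^{(m_R)}$, each a closed rectangle of full width in $R$, and $\mc R'$ is finite. By construction every point of $O\cap R$ lies on one of the cutting leaves. Hence each piece meets $O$ only along its horizontal sides. Because $O$ is $G$--invariant, $g(O\cap R)=O\cap gR$. So the translate $gR$ is subdivided exactly into the translates $gR^{(k)}$, and the translates of pieces cover $\mc P$ because the translates of the $R$ do.

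\textbf{Key geometric observation.} Let $R'=R^{(k)}$ be a piece, and let $a_iR_i$ be a translate above $R$. A horizontal leaf and a vertical leaf meet at most once. Any point lying on a horizontal leaf meeting $R$ and on a vertical leaf meeting $R$ therefore lies in $R$. Since $\FF^v(a_iR_i)\subset\FF^v(R)$, any $p\in O\cap a_iR_i$ whose horizontal leaf meets $R$ lies in $O\cap R$. Such a leaf is then either a horizontal side of some piece of $R$ or misses $\FF^h(R)$. Consequently the cutting leaves of $a_iR_i$ never pass through the interior of $\FF^h(R')$. So every piece $a_iR_i^{(l)}$ meeting the interior of $R'$ satisfies $\FF^h(R')\subseteq\FF^h(a_iR_i^{(l)})$. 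It also satisfies $\FF^v(a_iR_i^{(l)})=\FF^v(a_iR_i)\subsetneq\FF^v(R)=\FF^v(R')$. These pieces cover $R'$, because the $a_iR_i$ cover $R$ and each $a_iR_i$ is the union of its pieces. The same argument with horizontal and vertical roles exchanged handles translates below $R$. There the pieces $b_jR_j^{(l)}$ automatically have strictly taller horizontal saturation only if no cutting leaf interferes; the observation above, applied to $\FF^h(R)\subset\FF^h(b_jR_j)$, shows that $O$--points of $R$ cut $b_jR_j$ along leaves that at worst coincide with sides of $R'$.

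\textbf{Main obstacle: strictness.} The difficulty is the boundary case where a horizontal side of $R'$ (a leaf through some $q\in O\cap R$) also bounds the relevant piece of $a_iR_i$. Then we get $\FF^h(R')=\FF^h(a_iR_i^{(l)})$ on that side rather than strict containment. My first attempt is to replace the one-step cover by an iterated one. Covering each $R_i$ by its own above-translates gives covers of $R$ by rectangles $a_ia_{ij}R_{ij}$, and so on. This yields finitely many rectangles at each depth, all above $R$ by transitivity of ``above''. Suppose that at every depth some piece shares the horizontal side through $q$. Then by finiteness of $\mc R$ and of $O\cap R$, two depths give the same pair (rectangle of $\mc R$, point of $O$ on the side). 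The corresponding composite $g$ then maps the leaf $\FF^h(q)$ to itself and maps a rectangle to one strictly inside it in the vertical direction. So $g$ fixes a point of $\FF^h(q)$ and contracts transversally along the horizontal leaf direction while also preserving the side, a configuration I would rule out using hyperbolicity of fixed points, much as in \Cref{rem:uniform_hyp_difference}. Failing that, I would fall back on a slight outward enlargement of the pieces, which is harmless since pieces only need to cover. In that case it remains to recheck that enlarged pieces still meet $O$ only on horizontal sides, using discreteness of $O$. At some finite depth all ties disappear, which gives the required finite above-cover of each piece; the below-cover is symmetric.
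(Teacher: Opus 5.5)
Your bookkeeping and key observation are correct and match the paper's starting point. Since $a_iR_i$ is narrower than $R$, an $O$--point of $a_iR_i$ on a horizontal leaf through $R$ already lies in $O\cap R$, so every piece of $a_iR_i$ meeting $\mathrm{int}(R')$ has $\FF^h(R')\subseteq \FF^h(a_iR_i^{(l)})$. The gap is in what you take the obstruction to strictness to be. ``Above'' only asks that $\FF^h(R')$ be a \emph{proper subset} of $\FF^h(a_iR_i^{(l)})$, so a single shared horizontal side is harmless. Strictness fails only when \emph{both} horizontal sides of the piece lie on the leaves of the two sides of $R'$. Then the piece has exactly the height of $R'$ and, being narrower, sits inside $R'$. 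One-sided ties are moreover unavoidable. If $q\in O$ lies on a side of $R'$, every cover of $R'$ by translates of elements of $\mc R'$ has a member through $q$; since these translates meet $O$ only on horizontal sides, that member has a side on $\FF^h(q)$. So your claim that ``at some finite depth all ties disappear'' is false, and no hyperbolicity contradiction can come from a one-sided tie. For instance, an element fixing $q$ and its prongs and expanding $\FF^v(q)$ maps a small rectangle with $q$ on its bottom side to a narrower, taller one with the same bottom leaf. The fallback of enlarging pieces is not available either: $\mc R'$ is prescribed, and pushing a side across a cut leaf would put a point of $O$ in the interior.

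What you need is the paper's argument with the two-sided notion of tie. A bad piece $a_iS_i$ satisfies $\FF^h(a_iS_i)=\FF^h(S)$ and $a_iS_i\subsetneq S$. Replace it by the $a_i$--translates of those pieces of the rectangles $a_jR_j$ covering $R_i$ from above that meet $\mathrm{int}(S_i)$. The good ones are above $a_iS_i$, hence above $S$, and any new bad piece again has the height of $S$ and is nested in $a_iS_i$. If this never stopped, some $s\in\mc R'$ would recur in a nested chain, giving $g$ with $g(g_1s)=g_2s\subsetneq g_1s$ and $\FF^h(g_2s)=\FF^h(g_1s)$. Then $g$ or $g^2$ maps the top side of $g_1s$ into itself, so it fixes a point $x$ there. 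It also maps the full vertical segment of $g_1s$ through $x$ onto itself, so it fixes that segment's bottom endpoint. Two fixed points on one vertical leaf contradict hyperbolicity, and it is exactly the preservation of both sides that produces this.

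Finally, your below paragraph has the inclusions reversed. A translate below $R$ is wider and shorter, $\FF^h(b_jR_j)\subsetneq\FF^h(R)$, and this case is not a horizontal/vertical swap, since the cuts are still horizontal. Being wider, $b_jR_j$ contains every point of $O\cap R$ in its height range. So its pieces meeting $\mathrm{int}(R')$ have height contained in, and possibly equal to, that of $R'$; in the equal case $R'$ nests inside the piece. The same iteration with the nesting reversed handles this. The paper calls the below case immediate, but the equal-height situation genuinely occurs (e.g.\ $R'$ between two cut leaves and $b_jR_j$ spanning both), so your instinct that it is symmetric to the above case is the right one.
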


\begin{proof} 
Let $R$ be a rectangle of $\mc R$. Since $O$ is closed and discrete and $R$ is compact, $R \cap O$ is finite and so subdividing $R$ along the horizontal segments through points of $R \cap O$ produces a finite collection of subrectangles of $R$, each of which has its vertical sides along the vertical sides of $R$.  Let $\mc R'$ denote this collection.  

The fact that translates of rectangles in $\mc R'$ cover $\mc P$ follows immediately from the same fact for $\mc R$.  Thus, to show $\mc R'$ satisfies the finite rectangle condition, we need to check the coverings by rectangles above and below.  

To do this, fix $S$ in $\mathcal{R'}$ and let $R$ denote its parent rectangle in $\mathcal{R}$.  Consider the finite collection of rectangles $a_i R_i$ that are above $R$ and cover $R$.  For each $a_i R_i$, if $R_i$ contains points of $O$, then by $G$--invariance of $O$, their image under $a_i$ cannot be contained in the interior of $S$.  Thus, for each $i$, the intersection $S \cap a_i R_i$ lies in some subrectangle $a_i S_i$ where $S_i \in \mathcal{R'}$. 
We will show that either the $a_i S_i$ cover $S$ and are above $S$, or, up to replacing some or all of the $a_iS_i$ with a finite cover by translates of rectangles above $a_i S_i$, this new finite set covers $S$.  

To do this, first observe that since $a_i R_i$ is above $R$, either $a_i S_i$ is above $S$ (which is the desired situation),
or $a_i S_i$ has points of $O$ on its top \emph{and} bottom boundary (i.e. both horizontal sides),
each of which lies along the horizontal boundary of $S$. In this case, we say that the horizontal sides of $a_iS_i$ \emph{nest} in the horizontal sides of $S$, which implies that $a_iS_i \subset S$.
See \Cref{fig_chopping} for an illustration.

 \begin{figure}[h]
   \labellist 
     \small\hair 2pt
     \pinlabel $R_i$ at 400 70
     \pinlabel $S_i$ at 320 80
         \pinlabel $a_{i}S_i$ at 60 120
     \pinlabel $a_i$ at 200 20
      \pinlabel $R$ at -10 70
     \pinlabel $S$ at 120 120
\normalsize 
 \endlabellist
\includegraphics[width=9cm]{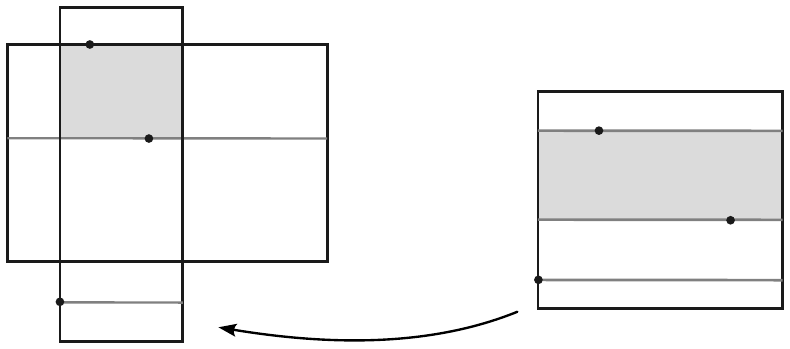}
\caption{If $a_{i}S_i$ is not above $S$, then $S$ is bounded above and below by segments containing points of $O$.}
 \label{fig_chopping} 
\end{figure}

To treat this latter case, 
thinking of the rectangle $R_i$ as fixed, 
we consider the finite collection of rectangles $a_jR_j$ that cover and are above $R_i$. By repeating the previous argument to cover the rectangle $S_i$ and translating to $S$ using $a_i$, we either obtain a finite covering of $S$ by translates of the form $a_i S_i$ and $a_ia_j S_j$, each of which is above $S$, or for some $j$, $a_ia_j S_j$ again has points of $O$ on both its horizontal sides
and these sides lie along the horizontal sides of $a_iS_i$, and hence along $S$. In other words, the horizontal sides of $a_j S_j$ nest in the horizontal sides of $S_i$.

However, the total number of times that this process can continue is bounded by the number of rectangles in $\mc R'$. Indeed, otherwise we would encounter a single $s \in \mc R'$ that appears in distinct stages, meaning that there are $g_1,g_2 \in G$ such that the horizontal sides of $g_2 s$ nest in the horizontal sides of $g_1 s$ which in turn nest in the horizontal sides of $S$. But then the element $g_2g_1^{-1} \in G$ maps the rectangle $g_1 s$ to the rectangle $g_2 s \subset g_1s$ and this generates a nonhyperbolic fixed point of $g_2g_1^{-1} \in G$ along each of the horizontal sides of $g_1s$. This contradiction implies that the above process terminates in a finite collection of translates of $\mc R'$ that are above and cover $S$.

The proof for covering by rectangles below is immediate, since no cuts were made to the vertical direction of any rectangle.   
\end{proof}

\begin{remark}
The reader may verify that, if the set $O$ in \Cref{lem_cut_rectangles_2} is the set of singularities of $\mc P$, then the condition that the action has hyperbolic fixed points is in fact not needed and, moreover, that the number of iterations in the argument is at most two.
\end{remark}

\smallskip
We can now pass our conditions to branched covers:

\begin{proposition}  \label{prop_pass_to_cover}
Suppose $G$ acts on $(\mc P, \FF^h, \FF^v)$ with any of the following properties:
\begin{itemize} 
\item uniformly hyperbolic fixed points, 
\item the finite rectangle condition, or
\item transitivity.  
\end{itemize}
If $\wh {\mc P} \to {\mc P}$ is a $G$--compatible, regular branched cover and $\wh G$ is the group of all lifts of elements of $G$ to $\wh{{\mc P}}$, then the action of $\wh{G}$ on $\wh{{\mc P}}$ also has that same property.  
\end{proposition}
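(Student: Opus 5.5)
We treat the three properties separately. Throughout, let $\pi\colon \wh{\mc P}\to\mc P$ be the branched cover with deck group $H$, and let $O\subset\mc P$ be the (closed, discrete, $G$--invariant) set of branch values. The basic tool is the following observation. If $R\subset\mc P$ is a rectangle whose interior and vertical sides avoid $O$, then $R$ minus its horizontal sides is simply connected and misses $O$. Hence each component of $\pi^{-1}(R)$ is a rectangle mapped isomorphically onto $R$ by $\pi$, and $H$ permutes these lifts transitively. The same conclusion holds if $R$ meets $O$ only along its horizontal sides, since such a point is a prong-type endpoint and $R$ is a half-neighborhood of it, so $R$ still lifts homeomorphically. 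We also use that a lift $\wh R$ of a rectangle carries the $\FF^{h/v}$-saturation structure faithfully: $\pi$ restricted to $\FF^v(\wh R)$ near $\wh R$ is a local isomorphism of leaf spaces.

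\textbf{Finite rectangle condition.} First apply \Cref{lem_cut_rectangles_2} with our branch set $O$. This replaces $\mc R$ by a family $\mc R'$ of rectangles meeting $O$ only along horizontal sides. For each $R\in\mc R'$, choose one lift $\wh R$, and let $\wh{\mc R}$ be this finite family. The translates cover $\wh{\mc P}$: any $\wh x$ projects into some $gR$, the lift $\hat g\wh R$ of $gR$ can be adjusted by an element of $H\subset\wh G$ to contain $\wh x$. For the above-covering of $\wh R$, project: $R$ is covered by $a_iR_i$ above $R$. For each $i$, $a_iR_i\cap R$ is connected, so there is a unique lift of $a_iR_i$ meeting $\wh R$, say $\hat a_i h_i\wh R_i$. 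We claim that being above lifts: the saturations of $\wh R$ and of this lift are contained in a simply connected region mapping isomorphically onto the union $R\cup a_iR_i$, which is a cross-shaped, hence contractible, union avoiding $O$ in its interior. Strict inclusions of saturations therefore transfer. Finitely many such lifts cover $\wh R$, and belowness is symmetric.

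\textbf{Uniform hyperbolicity.} Let $\hat g\in\wh G$ fix $\wh x$, and let $g$ be its image, which fixes $x=\pi(\wh x)$. Near $\wh x$ the map $\pi$ is either a homeomorphism or a branched map $z\mapsto z^k$ compatible with foliations. Contraction or expansion along the leaves of $x$ pulls back to the corresponding behavior on the leaves through $\wh x$, provided $\hat g$ is not of finite order. If $\hat g$ is nontrivial but $g$ is trivial, then $\hat g\in H$ acts as a rotation at a branch point, which is not hyperbolic. So the statement must be understood for the relevant elements, or else $H$-elements must be excluded. I expect to have to treat this point carefully; it is the main subtlety.

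For the nested sequences: if $\hat g_n\wh R$ is above $\hat g_{n-1}\wh R$, first subdivide $\wh R$ into small rectangles that project injectively. Then each projection $g_nR$ is above $g_{n-1}R$, by the local isomorphism of saturations established above. The intersection of the $\FF^v$-saturations is then a single leaf downstairs, and its lift is a single leaf upstairs, since $\pi$ is injective on the relevant saturation.

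\textbf{Transitivity.} Take $x$ with dense $G$-orbit. We may take $x\notin O$, since the set of points with dense orbit is dense $G_\delta$ for a transitive action on a second countable Baire space. Take any lift $\wh x$. Given an open set $\wh U$, shrink it to avoid $\pi^{-1}(O)$ and to map homeomorphically onto $U$. Some $gx\in U$; choose any lift $\hat g$, and then correct by an element of $H$ so that $\hat g\wh x\in\wh U$. Since $H$ acts transitively on the fibers, the $\wh G$-orbit of $\wh x$ is dense.

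The main obstacle is the transfer of "above" through lifts in the finite rectangle step: justifying that the union of the two overlapping rectangles, together with their relevant saturated neighborhoods, lifts isomorphically.
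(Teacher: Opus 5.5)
Your three-part plan matches the paper's: push the nesting condition down to $\mc P$, subdivide with \Cref{lem_cut_rectangles_2} and take one lift of each rectangle for the finite rectangle condition, and lift a dense orbit.

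The subtlety you flag is genuine, and it is a defect of the statement rather than of your argument. The paper's proof says hyperbolicity of fixed points ``obviously holds for a lifted action''. But once the cover actually branches, each branch point is fixed by a nontrivial $h\in H\le\wh G$. By proper discontinuity $h$ has finite order, so it contracts no leaf; for instance, take the order-two deck transformations about lifts of odd prongs in $G_*$. So that bullet is false for $\wh G$ as stated, and you should not try to prove it.

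What is true is the following. The nesting clause of \Cref{def:uniformlyhyp} lifts to all of $\wh G$. Every fixed point of an infinite-order element of $\wh G$ is hyperbolic: if the element lies in $H$ it has no fixed points, and otherwise its image in $G$ is nontrivial and your local argument applies. Hence every torsion-free subgroup of $\wh G$ has uniformly hyperbolic fixed points. That is what \Cref{cor_N_psi} actually needs, since it applies the property to $G_*^+$, which is torsion-free by \Cref{lem:torsion_free}.

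Several local steps need repair.

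\emph{Do not subdivide in the nesting argument.} The hypothesis that $\hat g_n\wh R$ is above $\hat g_{n-1}\wh R$ does not pass to subrectangles of $\wh R$. Subdividing is also unnecessary, since $\pi$ is injective on every rectangle of $\wh{\mc P}$; this is what the paper uses when it says the image of a rectangle is a rectangle.

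\emph{Transfer ``above'' through segments, not saturations.} Both in the nesting argument and in the finite rectangle step, use the segment characterization stated after \Cref{def:above}, which involves only the two rectangles. Your appeal to saturations does not work as written. Saturations consist of whole leaves, they are not contained in the cross $\wh R\cup\hat a_ih_i\wh R_i$, and $\pi$ need not be injective on them.

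\emph{Finish the nesting argument with injectivity on $\hat g_0\wh R$.} If $\bigcap_n\hat g_n\wh R$ had positive width, so would its image inside $\bigcap_n g_nR$.

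Smaller points:
\begin{itemize}
\item When $R$ meets $O$, the components of $\pi^{-1}(R)$ are several lifts glued at branch points. Use closures of components of $\pi^{-1}(R\ssm O)$ instead.
\item The lift of $a_iR_i$ you want is the one containing the lift of $R\cap a_iR_i$ inside $\wh R$. It need not be the only lift meeting $\wh R$, since other lifts can touch $\wh R$ at a branch point on a shared horizontal leaf.
\item \Cref{lem_cut_rectangles_2} assumes hyperbolic fixed points, which the finite-rectangle bullet alone does not provide. The paper has the same dependence. It disappears when branch values are singular, since then no rectangle contains one in its interior and no subdivision is needed.
\item For transitivity no Baire argument is needed: a point of the $G$-invariant, closed, discrete set $O$ cannot have a dense orbit.
\end{itemize}
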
 

In the proofs of \Cref{th:main}, we will only need to consider branched covers $\wh {\mc P} \to {\mc P}$ for which each branch value is a singularity of ${\mc P}$. This implies that each rectangle of ${\mc P}$ lifts to a rectangle of $\wh {\mc P}$. However, since we believe that more general branched covers could be useful in future work, we prove \Cref{prop_pass_to_cover} in its  general setting.  

\begin{proof} 
The fact that fixed points are hyperbolic obviously holds for a lifted action.  For uniformity, since the image of a rectangle in $\wh{{\mc P}}$ is a rectangle in ${\mc P}$, the failure of uniform hyperbolicity in $\wh{{\mc P}}$ pushes down to give failure in ${\mc P}$. 

If $\mc{R}$ is a finite collection of rectangles in ${\mc P}$ witnessing the finite rectangle condition, then 
we can first apply \Cref{lem_cut_rectangles_2} to replace $\mc R$ so that the branch values of $\wh {\mc P} \to {\mc P}$ occur only along the horizontal sides of rectangles in $\mc R$. Then
any choice of lift of each $R \in \mc{R}$ to $\wh{{\mc P}}$ forms a finite collection satisfying the finite rectangle condition.

Finally, if $G \cdot x$ is dense in ${\mc P}$, then $\wh G \cdot \wh{x}$ is dense in $\wh{{\mc P}}$ for any lift $\wh{x}$ of $x$.   Hence, transitivity also lifts, as required.
\end{proof} 

Because the closing property is framed differently around singular and nonsingular points, it respects branching over singular points:
 
\begin{proposition} \label{pass_to_cover2}
Suppose $G$ acts on $(\mc P, \FF^h, \FF^v)$ with the closing property and $\wh {\mc P} \to {\mc P}$ is a $G$--compatible, regular branched cover, branched over singular points.  Then the lifted action of $G$ on $\wh{{\mc P}}$ also has the closing property.  
\end{proposition}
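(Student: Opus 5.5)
The plan is to use the neighborhood bases in $\mc P$ and pull them back to $\wh{\mc P}$ by choosing, at each point, a small lifted neighborhood. Write $\pi\colon \wh{\mc P}\to\mc P$ with deck group $H$. The lifted action is by the group $\wh G$, since a lift of $g$ is only defined up to $H$. Fix $\wh x\in\wh{\mc P}$ and set $x=\pi(\wh x)$.

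First, suppose $\wh x$ is not a branch point. Then $\pi$ is a homeomorphism near $\wh x$. We may shrink the bases $\{U_i\},\{V_i\}$ at $x$ from the closing property for $\mc P$; for large $i$ they are evenly covered. Let $\wh U_i,\wh V_i$ be the components of $\pi^{-1}(U_i)$ and $\pi^{-1}(V_i)$ containing $\wh x$, so $\wh V_i\subset\wh U_i$.

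\begin{itemize}
\item \emph{Nonsingular $x$.} Suppose $\wh g\in\wh G$ satisfies $\wh g\wh V_i\cap\wh V_i\neq\emptyset$. Its image $g$ satisfies $gV_i\cap V_i\neq\emptyset$, so $g$ fixes some $y\in U_i$. Let $\wh y\in\wh U_i$ be the lift of $y$. Then $\wh g\wh y$ lies over $y$, so $\wh g\wh y=h\wh y$ for some $h\in H$. The issue is whether $h$ is trivial. Since $\wh g\wh V_i$ meets $\wh V_i$, the lift $\wh g$ carries the sheet over $V_i$ through $\wh x$ to itself up to an overlap. I would then argue by connectedness of $U_i$ that $\wh g\wh U_i\cap\wh U_i\neq\emptyset$. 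The goal is to choose the $U_i$ so that this forces $h=1$. One must be careful here, since $g$ may map $V_i$ far outside $U_i$. What I'd try first is to use a connected path from a point $p\in V_i\cap g^{-1}V_i$ to $y$ inside $U_i$. Its image under $g$ is a path from $gp\in V_i$ to $y$. If that image path also stays in a simply connected region free of branch values, the lifts match. I would attempt to guarantee this by using rectangle neighborhoods and the fact that $g$ fixes $y$ hyperbolically.
\item \emph{Singular $x$ that is not a branch value.} Run the same argument quadrant by quadrant, lifting each component $C_i$ to the sheet through $\wh x$.
\end{itemize}

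The remaining case is a branch point $\wh x$. Here $x$ is singular by hypothesis, and $\wh x$ is a singular point with more prongs. Near $\wh x$, $\pi$ is modeled on $z\mapsto z^m$, so each quadrant of $\wh x$ maps homeomorphically onto a quadrant of $x$. Take $\wh U_i,\wh V_i$ to be the components of the preimages containing $\wh x$; these are neighborhood bases of $\wh x$. Let $\wh C_i$ be a quadrant component of $\wh V_i$ with $\wh g\wh C_i\cap\wh C_i\neq\emptyset$. Its image $C_i$ satisfies $gC_i\cap C_i\neq\emptyset$, so one of two things holds.
\begin{itemize}
\item \emph{$g$ fixes $x$.} Then $\wh g$ permutes the lifts of $x$. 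I would show $\wh g$ fixes $\wh x$ by the same sheet argument as above, using that $\wh g\wh C_i$ meets $\wh C_i$, which lies near $\wh x$.
\item \emph{$g$ fixes $y$ in the quadrant of $U_i$ containing $C_i$ or in an adjacent one.} Adjacent quadrants of $x$ lift to adjacent quadrants of $\wh x$, namely those adjacent to $\wh C_i$'s quadrant. Lift $y$ into the corresponding quadrant of $\wh U_i$ and argue that $\wh g$ fixes that lift.
\end{itemize}

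The main obstacle throughout is the same step: showing that the particular lift $\wh g$, and not merely $h\wh g$ for some deck transformation $h$, fixes the lifted point. I expect to handle it via a monodromy argument. Because each quadrant, together with its adjacent quadrants, is simply connected and contains no branch values in its interior, $\wh g$ agrees with the unique lift determined by the intersecting sheet. This requires choosing the $U_i$ inside a neighborhood of $x$ whose only possible branch value is $x$ itself; such a choice is possible since the set of branch values is closed and discrete. If that fails because of $g$-images leaving $U_i$, the fallback is to use the fact that a fixed point $y$ of $g$ near $x$ has $g$ hyperbolic there. Its leaves then give a $g$-invariant rectangle-like path from $C_i$ to $y$ to which one can apply path lifting.
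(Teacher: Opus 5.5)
\textbf{There is a gap at the crux.} Your outline matches the paper's. You lift the neighborhood bases (a sheet at non-branch points, the component through $\wh x$ at branch points). You push an intersection $\wh g\wh C\cap \wh C\neq\emptyset$ down to $\mc P$ and invoke closing there. Then you must show that the \emph{given} lift $\wh g$, rather than some $h\wh g$ with $h\in H$, fixes the lifted point. The paper handles that last step in one sentence: the lift fixing a point in the corresponding quadrant ``is then the unique lift moving $R_q$ to $\hat g R_q$''. At non-branch points it does not raise the issue at all. You rightly isolate this step as the crux, but you do not close it. Your main justification is that a quadrant together with an adjacent quadrant is simply connected and free of branch values. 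That controls only the path $\gamma$ from $y$ to $z$, not its image $g\gamma$, which a priori lies only in $gW$ and could wind around branch values. You note this yourself and leave the repair as a tentative fallback, so as written the decisive step is missing.

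\textbf{How to close it.} Your fallback is the right idea, and hyperbolicity is not needed. Write $C=C_i$ and take a tail of the basis so that $U_i$ lies in a chart around $x$ containing no branch value other than possibly $x$. Let $W$ be an open rectangle in that chart that contains $y$ and $C$ and lies in the union of the two relevant quadrants and the half-leaf between them. At a nonsingular non-branch point, take instead any branch-free chart rectangle containing $U_i$. Then $\pi^{-1}(W)$ is a disjoint union of homeomorphic copies of $W$; let $\wh W$ be the copy containing $\wh C$.

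Pick $\wh z\in \wh C$ with $\wh g\wh z\in\wh C$ and set $z=\pi(\wh z)$. Let $\gamma$ run from $y$ along $\FF^v(y)$ to $\FF^h(z)$, then along $\FF^h(z)$ to $z$; it lies in $W$. Since $g$ fixes $y$, and a vertical leaf meets a horizontal leaf at most once, $g\gamma$ is exactly the analogous path from $y$ to $gz\in C$. Hence $g\gamma$ also lies in the rectangle $W$. By uniqueness of path lifting over the branch-free set $W$, $\wh g$ carries the lift of $\gamma$ in $\wh W$ to the lift of $g\gamma$ ending at $\wh g\wh z\in\wh W$. That lift lies inside $\wh W$. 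Comparing initial points, $\wh g$ fixes the lift of $y$ in $\wh W$, which is the required fixed point in $\wh U_i$. The case $g(x)=x$ is handled the same way, with $x$ in place of $y$ as an endpoint on the boundary of $W$.
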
 

\begin{proof}
Whenever $p$ is a non-branch point, we can lift a pair of neighborhood bases, taken small enough to be disjoint from all branch points, witnessing the closing property in ${\mc P}$ to a pair in $\wh {\mc P}$.  
 
For the case of singular points, let $q$ be the pre-image in $\wh {\mc P}$ of a singular branch point $p$.   Let $V_p \subset U_p$ be a pair of neighborhoods as in the definition of the closing property for $p$.  We may without loss of generality assume that each $V_p$ and $U_p$ restricts to a rectangle in each quadrant.  Let $V_q \subset U_q$ be lifts of $V_p$ and $U_p$ to $\wh {\mc P}$. 

Suppose that for some connected component $R_q$ of $V_q \ssm (\FF^h(q) \cup \FF^v(q))$ and $\hat{g}$ lift of $g$ to $\wh G$, we have $R_q \cap \hat{g}R_q \neq \emptyset$.   Let $R_p$ be the projection of $R_q$ to $\mc P$, then $R_p \cap gR_p \neq \emptyset$ so $g$ either fixes $p$ (in which case $\hat{g}$ fixes $q$ and we are done) or has a fixed point in the quadrant of $U_p \ssm (\FF^h(p) \cup \FF^v(p))$ containing $R_p$, or an adjacent component.   The lift of $g$ which has a fixed point in the corresponding quadrant of $q$ (adjacent to or containing $R_q$) is then the unique lift moving $R_q$ to $\hat{g}R_q$, and thus agrees with $\hat{g}$. Thus $V_q \subset U_q$ satisfies the closing property for $q$. 
\end{proof}

\begin{remark}[Passing to finite index subgroups] \label{rem_finite_index}
Observe also that uniformly hyperbolic fixed points, the finite rectangle condition, the closing property, and transitivity all persist under passing to a finite index subgroup $G'$ in $G$ -- the only non-obvious step is in the finite rectangles condition, where one should enlarge $\mc{R}$ to a collection 
\[
\{ hR : R \in \mc{R}, h \text{ a right-coset representative of } G' \}.
\]  
\end{remark} 

\section{Flows with leafwise orientation} \label{sec_compactness} 
Recall our standing assumption that $(\mc P, \FF^h, \FF^v)$ has only prong singularities (at most one on any leaf) and no 
properly embedded $[0, \infty) \times [0,1]$.  Our goal in this section is to prove \Cref{thm_compactness_no_transitive}.
This theorem parallels the setting of  \cite{BFM25}, but with the added certification that the manifold is compact, which was missing from that work. 

To prove it, we first recall the framework of \cite{BFM25}, in which a (not necessarily compact) 3-manifold with an expansive flow is constructed out of an action on a bifoliated plane. 
In \Cref{subsec_compact} we show that under the hypotheses of \Cref{thm_compactness_no_transitive}, this manifold is compact in the nonsingular case, in \Cref{subsec_compact_singular} we treat the generalization to the singular case, and \Cref{subsec_reverse_implicaiton} shows the reverse implication, namely that orbit space actions have the uniformly hyperbolic fixed points, the closing property, and the finite rectangle condition.

\subsection{Construction of a 3-manifold} \label{subsec_3manifold}

Suppose that $G$ acts on $(\mc P, \FF^h, \FF^v)$ preserving orientation along leaves of $\FF^h$.  For $x \in \mc P$, denote by $\FF^h_>(x) \subset \FF^h(x)$ the set of points on the positive side of $x$. 
If $\mc P$ is nonsingular, we define 

 \[W_h := \{(x,t) \in \mc P \times \mc P \mid t \in \FF^h_>(x)\} \]
equipped with the subset topology from $\mc P \times \mc P$.

In the singular case, Lemma 4.13 of \cite{BFM25} shows that stabilizers of singularities are infinite cyclic.  Using this, for each $2k$ prong singularity $p \in {\mc P}$, let $r_1(p), \ldots, r_k(p)$ denote the rays in $\mc F^h_>(p)$  of the prong, and fix a proper homeomorphism $\sigma^p_j\colon r_1(p) \to r_j(p)$ that is equivariant with respect to $\mathrm{Stab}_G(p)$ in the following sense:  if $g \in \mathrm{Stab}_G(p)$ fixes all rays through $p$, then $\sigma^p_j(g(x)) = g \sigma^p_j(x)$.   One can define such homeomorphisms arbitrarily on a fundamental domain for the stabilizer of $r_1$ and then extend equivariantly; fixing $\sigma_1 = \mathrm{id}$. We then can extend the definition of $W_h$ above as follows.  

\begin{definition}
Define a space $W_h \subset {\mc P} \times 2^{\mc P}$ as follows. 
We say $(x, Y) \in W_h$ if: 
\begin{itemize}
\item $Y = \{y\}, \, y \in \FF^h_>(x)$, $x$ is not singular, and $\FF^h_>(x)$ has no singular point between $x$ and $y$, 
or
\item $Y = \{y, \sigma^p_2(y), \ldots, \sigma^p_k(y) \}$, $y \in \FF^h_>(x)$  and  $p \in \FF^h(x)$ is a 2k-prong, either between $x$ and $y$  or equal to $x$. 
\end{itemize} 
We topologize $W_h$ by saying that $(x_n, Y_n)$ converges to $(x, Y)$ if $x_n \to x$, and {\em some point} of $Y_n$ converges to some point of $Y$.  
\end{definition} 

Corollary 1.11 of \cite{BFM25} shows that, if $G$ has uniformly hyperbolic fixed points and the closing property,
 then the action of $G$ on $W_h$ is properly discontinuous and free, and $W_h/G$ is a 3-manifold $M_{\mc P}$ with expansive flow $\psi_{\mc P}$ whose orbit space is equivariantly isomorphic to $(\mc P, \FF^h,\FF^v)$.  By \cite{inaba1990nonsingular, paternain1993expansive},
 expansive flows on compact 3-manifolds are pseudo-Anosov, and thus our goal is to show that under the additional hypothesis of the finite rectangle condition $W_h/G$ is compact.  

For simplicity, we do this first in the case where $\mc P$ is nonsingular, and then explain the modification for the general case. 

\subsection{Compactness}  \label{subsec_compact}
We begin by describing the flow $\psi_{\mc P}$ from the construction of \cite{BFM25}.  First, observe that for every point $x \in {\mc P}$, the space of points in $W_h$ with first coordinate $x$ is homeomorphic to a line (this is also true in the singular case).  This line corresponds to an orbit of the lift $\widetilde{\psi}_{\mc P}$ of $\psi_{\mc P}$ to $W_h \cong \mathbb{R}^3$, the universal cover of $M_{\mc P}$.  
We orient $W_h$ and the orbits of $\widetilde{\psi}_{\mc P}$ so that orbits are vertical lines in $\mathbb{R}^3$ and, as $t$ moves towards $x$, (or as $Y$ moves towards $x$ in the singular case), we travel vertically up the flowline $(x,t)$, or $(x, Y)$. This is consistent with the conventions established following \Cref{def:above}.

In the nonsingular case, every point of $W_h$ has a neighborhood of the following form:
A \emph{closed good rectangle neighborhood} (or closed good neighborhood for short) is 
a subset $V = (R \times R') \cap W_h$ such that:
\begin{enumerate}[label=(\roman*)]
\item $R$ and $R'$ are closed rectangles 
\item The saturations of $R$ and $R'$ by $\FF^h$ leaves agree, and 
\item $R \cap R' = \emptyset$.
\end{enumerate}
An open good rectangle neighborhood is defined in the same way, where $R$ and $R'$ are open rectangles.  

\begin{proposition} \label{prop_nonsingular}
Assume ${\mc P}$ is nonsingular and satisfies the hypotheses of \Cref{thm_compactness_no_transitive}. Then there is a finite collection of closed good rectangle neighborhoods whose $G$-translates cover $W_h$.  Consequently, $W_h/G$ is compact. 
\end{proposition}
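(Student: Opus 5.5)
Let $\mc R$ be the finite family of rectangles given by the finite rectangle condition. The plan is to show that every point $(x,t)\in W_h$ can be moved by some $g\in G$ into one of finitely many closed good neighborhoods of the form $(R\times R')\cap W_h$. These neighborhoods will be built from pairs $(R, aR_i)$ in which the translate $aR_i$ lies "above" or "below" $R$ in a controlled way. Once such a finite family exists, compactness of $W_h/G$ is immediate: each closed good neighborhood $(R\times R')\cap W_h$ is compact, since $R$ and $R'$ are disjoint compact rectangles with the same $\FF^h$-saturation. Hence the quotient is a finite union of compact images.

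\textbf{Step 1: normalize the first coordinate.} First, using the covering property, translate $(x,t)$ so that $x\in R$ for some $R\in\mc R$. The second coordinate $t$ lies on the positive ray $\FF^h_>(x)$ at some possibly very large distance. The whole difficulty is to control $t$. Geometrically, moving $t$ toward $x$ is flowing upward, and moving $t$ away from $x$ is flowing downward.

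\textbf{Step 2: iterate the above/below coverings.} The idea is to apply the "above" coverings iteratively. Choose $R_1=a_1R_{i_1}$ above $R$ containing $x$, then $R_2$ above $R_1$ containing $x$, and so on.
\begin{itemize}
\item Because each new rectangle is above the previous one, the $\FF^h$-saturations strictly grow while the $\FF^v$-saturations shrink.
\item By uniform hyperbolicity, applied after translating back so that we compare $g_n(R_*)$ for a fixed $R_*\in\mc R$ appearing infinitely often (pigeonhole on the finite family), the $\FF^v$-saturations shrink to the single leaf $\FF^v(x)$.
\end{itemize}
I then want the $\FF^h$-saturations, restricted to the horizontal leaf through $x$, to exhaust $\FF^h_>(x)$. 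For that I will use the symmetric statement with "below" coverings together with the absence of infinite product regions. If the horizontal segments of the $R_n$ through $x$ stayed within a bounded subray, the nested rectangles would limit onto a vertical leaf while their horizontal extent stayed bounded. I would then argue this forces either a nonhyperbolic fixed point or a properly embedded half-infinite horizontal strip; this is the delicate step.

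Given exhaustion, there is some $n$ with $t$ in the horizontal segment of $R_n$ through $x$, but not in that of $R_{n-1}$. So $t$ lies in $R_n\setminus R_{n-1}$, more precisely in the part of $R_n$ lying in $\FF^h(R_{n-1})$ beyond $R_{n-1}$. Writing $R_{n-1}=gS$ and $R_n=g a S'$ with $S,S'\in\mc R$ and $a$ among the finitely many elements used in the above-coverings of $S$, we apply $g^{-1}$. This places $(g^{-1}x,g^{-1}t)$ in $(S\times aS')\cap W_h$, with $aS'$ one of finitely many rectangles above $S$.

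\textbf{Step 3: extract good neighborhoods.} For each of the finitely many pairs $(S,aS')$ with $aS'$ above $S$, the set of relevant pairs $\{(y,s): y\in S,\ s\in aS',\ s\in\FF^h_>(y)\}$ with $s$ outside (or at the edge of) $S$ is covered by finitely many closed good neighborhoods. To see this, subdivide $aS'\cap\FF^h(S)$ into the parts lying to the positive side of $S$. These parts are rectangles disjoint from $S$, or from a slight shrinking of it, with matching $\FF^h$-saturation after cutting $S$ horizontally. The case $t$ close to $x$ is handled symmetrically by the below-coverings: flow downward until $t$ lies on the far side of a rectangle containing $x$.

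\textbf{Main obstacle.} The main obstacle is the exhaustion claim in Step 2: that iterated above-coverings eventually swallow any given point $t\in\FF^h_>(x)$. I would first try to derive it from the below-version of uniform hyperbolicity, applied to the reversed sequence, combined with the no-infinite-product-region assumption.
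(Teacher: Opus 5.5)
There is a genuine gap, and it starts with an inversion of the geometry of ``above'' in Step 2. If $R_n$ is above $R_{n-1}$, then $\FF^v(R_n)\subsetneq\FF^v(R_{n-1})$, so $R_n$ is horizontally \emph{narrower}. Since $x\in R_n\cap R_{n-1}$, the horizontal segment of $R_n$ through $x$ lies inside that of $R_{n-1}$. By the very pigeonhole/uniform-hyperbolicity argument you give, these segments shrink to $x$. Consequences:
\begin{itemize}
\item The exhaustion you want is false.
\item There is no $n$ with $t\in R_n\ssm R_{n-1}$.
\item The set in Step 3 has no points with $s$ outside $S$, because each horizontal segment of $aS'$ lies inside one of $S$.
\end{itemize}
The roles must be swapped. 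Above-towers handle $t$ close to $x$ (points far up a flow line); this is the paper's Claim~\ref{claim:good_position}. Below-towers, whose horizontal segments grow, are what handle $t$ far from $x$. Your remark that ``the case $t$ close to $x$ is handled symmetrically by the below-coverings'' is reversed for the same reason.

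After the swap, the real content is exhaustion of $\FF^h_>(x)$ along below-towers (the paper's Claim~\ref{claim:good_position2}). Your proposed tools do not give it. The ``below'' half of Definition~\ref{def:uniformlyhyp} only says the vertical extents $\FF^h(g_nR)$ collapse onto $\FF^h(x)$. A bounded, nested family of flattening rectangles does not produce a properly embedded half-infinite strip, so the no-product-region hypothesis does not help.

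The paper instead uses that $G\curvearrowright W_h$ is properly discontinuous (Corollary 1.11 of \cite{BFM25}). That result requires the closing property, a hypothesis your proposal never uses. If the positive sides of the distinct translates $g_nR_i$ stayed in a compact set, infinitely many $g_n$ would move a fixed compact subset of $W_h$ to meet a fixed compact subset.

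Finally, even with the correct tower, the set $\{(y,s): y\in S\cap aS',\ s\in S \text{ just beyond } aS'\}$ is not compact in $W_h$. It accumulates on the diagonal of $\mc P\times\mc P$, which corresponds to flow lines running off to $+\infty$, so you need a buffer. The paper builds one in with companion rectangles $R'$ disjoint from $R$, together with gap-filling neighborhoods $(A\times A')\cap W_h$, where $A=R\cap a_iR_i$ and $A'$ is the region between $a_iR_i'$ and $R'$. Your ``slight shrinking'' would need a separate argument that the shrunken rectangles still cover.
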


\begin{proof}
Let $\mc R = \{R_1, \ldots R_n \}$ be a family of rectangles witnessing the finite rectangle condition of the action.  Let $\mc A$ and $\mc B$ denote the finite sets of elements of $G$ taking rectangles above (resp. below) each other as specified in the condition. 
For each $R_j$, choose $R_j'$ such that $V_j = (R_j \times R_j') \cap W_h$ is a good neighborhood.  Such an $R_j'$ exists because a compact rectangle can always be included in a slightly larger rectangle. 

If $G (\bigcup_j  V_j)$ already covers $W_h$, we are done, otherwise we will iteratively add finitely many more good neighborhoods to the collection. For this, we will use the following consequence of uniform hyperbolicity. 
\begin{claim} \label{claim:good_position}
Let $R_j \in \mc R$.  For each point $p \in R$, and $t \in \FF^h_>(p)$ there exists $R_i \in \mc R$ and a translate $g R_i$ containing $p$, where $g$ is a word in $\mc A$, such that $g R'_i$ is on the negative side of $t$.  In particular, we can find such a $g R_i'$ on the negative side of $R_j'$. 
\end{claim}

\begin{proof}[Proof of claim]
Let $R_j$ be given and $p \in R_j$.  To ease notation, we notate the pair $R_j, R_j'$ by $R$ and $R'$ instead. 
For each good neighborhood $V_i$, let $S_i$ denote the minimal rectangle containing $R_i$ and $R_i'$.  
 By iterative applications of the finite rectangle criterion, we can find a tower $w_{k}R_{i_k}$ where  $w_{k}R_{i_k}$ is above $w_{k-1}R_{i_{k-1}}$ and all contain $p$.  Each $w_k$ is a word in $\mc A$.  By pigeonhole, some fixed rectangle appears infinitely often in this sequence.  Let $R_i$ be such a rectangle, and let $\wh R = w R_i$ be its first appearance in the tower. 
Thus, there are words $g_n$ in the $w_k$ such that $g_n \wh R$ is above $g_{n_1} \wh R$.  Since fixed points are hyperbolic, this implies also that $g_n( wS_i)$ is above $g_{n-1}(w S_i)$.  By uniform hyperbolicity, the $\FF^v$ saturation of the $g_n( wS_i)$ limits to a single leaf (necessarily containing $p$). Since $g_n(w R'_i) \subset g_n(w S_i)$, we are done. 
\end{proof}

Fix $R \in \mc R$.   Using the claim, for each $p \in R$ we have a rectangle $a(p) R_{i(p)}$ above $R$, containing $p$ and such that 
 $a(p) R'_{i(p)}$ is on the negative side of $R'$.  We next argue how to reduce this to a finite collection:  First, the vertical boundaries of $R$ can each be covered by a single rectangle of the form $a(p) R_{i(p)}$, fix these.  Take compact $S \subset R$ so that the interior of $S$ and the interior of the chosen boundary rectangles still covers $R$.  Consider the open cover of $S$ given by the two fixed boundary rectangles, and all sets of the form 
 \[
 \mathrm{int} \left ( a(p_1) R_{i(p_1)} \cup a(p_2) R_{i(p_2)} \right ),
 \]
 where $p_1, p_2 \in S$, and $\mathrm{int}$ denotes relative interior (in $S$). This allows us to consider adjacent rectangles which meet only along their boundary, and merge them into a single open set.  By compactness of $S$, finitely many such sets cover, which means that there is a finite collection of $a(p) R_{i(p)}$ which cover $R$.   
 
 \smallskip
Fix this finite collection, and call it $\mathcal{C}(R)$.  
For rectangle $a_i R_i$ in $\mathcal{C}(R)$, we add a good neighborhood $U = (A \times A') \cap W_h$ where $A = R_1 \cap a_{i}R_{i}$ and $A'_{i}$ is the region of $\FF^h(R)$ bounded between $a_{i}R'_{i}$ and $R'$.  See 
 \Cref{fig_new_rectangles}.  
These are the finitely many new closed good neighborhoods associated to $R$, and we repeat this procedure for all $R \in \mc R$. Let $\mc U$ denote the collection of all $V_j$ and all these new closed good neighborhoods.   

 \begin{figure}[h]
   \labellist 
     \small\hair 2pt
     \pinlabel $A$ at 92 70
     \pinlabel $A'$ at 235 70
    \pinlabel $R$ at 50 70
     \pinlabel $R'$ at 290 70
    \pinlabel $a_{i}R_i$ at 91 -8
     \pinlabel $a_{i}R_i'$ at 200 -8
 \endlabellist
\includegraphics[width=7cm]{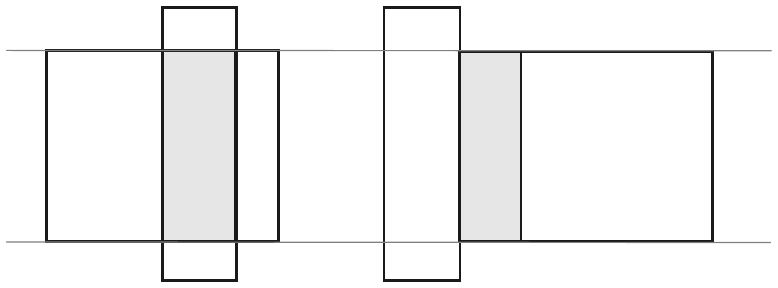}
\caption{New rectangles $A$ and $A'$ for the proof of \Cref{prop_nonsingular} }
 \label{fig_new_rectangles} 
\end{figure}

We claim now that, for each $(p_0, t_0)$ in any $V = V_j$, the entire ray $\{ (p_0, t) : t_0 \in \FF^h_>(t) \}$ corresponding to the ``upwards" orbit of $(p_0, t_0)$ under the flow $\psi_{\mc P}$, is covered by $G$-translates of elements of $\mc U$. 

To see this, we argue as follows.  Let $(p_0, t_0) \in V = (R \times R') \cap W_h$ be given.  As argued above, there is some $R_0 \in \mc R$ and $a_0 \in G$ such that $a_0 R'_0$ is on the negative side of $R'$, and in the previous step we created also a neighborhood of the form $A_0 \times A'_0 \cap W_h$ such that $A'_0$ contains the segment of $\FF^h(p)$ between $a_0 R'_0$ and $R'$. Thus, every point $(p_0, t)$ where $t$ lies either in $a_0 R'_0$ or between $a_0 R'_0$ and $t_0$, is covered by a translate of one of the good neighborhoods in $\mc U$. 

Now consider $a_0^{-1}(p_0)$ which lies in $R_0$. Repeating the argument above with $R_0$ playing the role of $R$, we can find $R_1 \in \mc R$ and $a_1$ such that $a_1 R'_1$ is on the negative side of $R_0'$ and every point of the form 
$(a_0^{-1}p_0, t)$ where $t$ lies either in $a_1 R'_1$ or between $a_1 R'_1$ and $a_0^{-1}t_0$, is covered by a translate of one of our finite set of good neighborhoods.  Applying $a_0$, we have now that every point of the form $(p_0, t)$ where $t$ is between the negative (in the orientation) boundary of $a_0 a_1 R_1'$ and $t_0$ is now covered.  Repeat this procedure inductively. 

As in \Cref{claim:good_position}, uniform hyperbolicity implies that the $\FF^v$ saturation of the iterates $a_0 a_1.... a_n (R_n')$ approaches the $\FF^v$ leaf of $p$.  This shows that the ray $\{ (p_0, t) : t_0 \in \FF^h_>(t) \}$ is contained in the union of translates of our good neighborhoods.  
\smallskip

By using a similar procedure, considering translates below the elements of $\mc R$ and adding finitely many additional closed good neighborhoods, we will next ensure that the downwards orbit of any point in any $V_i$ is covered by $G$-translates of a finite collection of closed good neighborhoods.
For this, we use the following analog of \Cref{claim:good_position}. 

\begin{claim} \label{claim:good_position2}
Let $R \in \mc R$.  For each point $p \in R$, and $t \in \FF^h_>(p)$ there exists $R_i \in \mc R$ and a translate $g R_i$ containing $p$, where $g$ is a word in $\mc B$. 
such that $g R'_i$ is on the positive side of $t$.  In particular, we can find such a $g R_i'$ entirely contained on the positive side of $R'$. 
\end{claim}

\begin{proof}[Proof of claim]
As in the proof of \Cref{claim:good_position}, the finite rectangle condition and pigeonhole principle can be used to find a single rectangle $R_i \in \mc R$ and a sequence of translates $g_n R_i$ containing $p$, where $g_n R_i$ is below $g_{n-1} R_i$.  In particular the elements $g_n$ are all distinct.  Since the action of $G$ on $W_h$ is properly discontinuous, the positive sides of the $g_n R_i$ must escape all compact sets. Since $g_n R'_i$ is on the positive side of $g_n R_i$, this proves the claim.  
\end{proof} 

With this claim in hand, we can imitate the construction above, adding finitely many good rectangle neighborhoods to cover the ``gaps" between successive layers of translates of the existing neighborhoods.  In detail, 
using \Cref{claim:good_position2}, for each $R \in \mc R$, we can find a finite collection of translates $b_{i} R_{i}$ which are all below $R$, cover $R$, and such that $b_{i} R'_{i}$ is on the positive side of $R'$.  
For each element of this finite collection, we add a good neighborhood of the form $U' = (B_{i} \times B'_{i}) \cap W_h$ where $B_{i} = R \cap b_{i}R_{i}$ and $B'_{i}$ is the region of $\FF^h(b_{i_k}R_{i_k})$ bounded between $R'$ and $b_{i}R'_{i}$.  
We repeat this procedure for all $R \in \mc R$, and let $\mc U'$ denote the collection of all $U'$ and all these new closed good neighborhoods. 

The argument to show that, for each $(p_0,t_0)$ in a good neighborhood, the ray $\{ (p_0, t) : t \in \FF^h_>(t_0) \}$ corresponding to its downwards orbit is contained in the union of the translates of the elements of $\mc U'$ is exactly the same as the previous case. 

Thus, we can cover the {\em full orbit} of each point in a good neighborhood $V_i$ by $G$-translates of a collection of finitely many compact sets.   
Since by hypothesis the translates of $R_i$ cover the orbit space ${\mc P}$, we conclude that $W_h$ is covered by translates of finitely many compact sets, as desired.  
\end{proof} 

\subsection{Modifications when $\mc P$ is singular}  \label{subsec_compact_singular} 
In this section we describe the necessary modifications to the proof of \Cref{prop_nonsingular} to treat the singular case, proving the following: 

\begin{proposition} \label{prop_singular_case}
Suppose $\mc P$ satisfies the hypotheses of \Cref{thm_compactness_no_transitive}. Then there is a finite collection of closed good rectangle neighborhoods whose $G$-translates cover $W_h$.  Consequently, $W_h/G$ is compact. 
\end{proposition}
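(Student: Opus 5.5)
The plan is to run the proof of \Cref{prop_nonsingular} essentially unchanged. The one real change is that a ``good neighborhood'' near a singular leaf must now be built from the multi-valued second coordinate $Y$ of $W_h$. The first step is a preliminary reduction: apply \Cref{lem_cut_rectangles_2} with $O$ the set of singular points, which is $G$--invariant, closed and discrete. This lets us assume every $R \in \mc R$ meets singularities only along its horizontal sides. Concretely, each rectangle either avoids singular leaves of $\FF^v$ in its interior, or has a singular point $p$ on a horizontal side and sits in a single ``quadrant'' of $p$ (here we use the remark after that lemma, so no hyperbolicity is needed). The second coordinate lives on $\FF^h_>(x)$. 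When the positive horizontal ray from $R$ crosses a $2k$-prong $p$, the good neighborhood is defined as $V=\{(x,Y)\in W_h : x\in R,\ Y\cap R'\neq\emptyset\}$. Here $R'$ is a rectangle in the branch $r_1(p)$, and the branches $r_j(p)$ are identified via the equivariant maps $\sigma^p_j$. Given the definition of the topology on $W_h$, such sets are compact. They are neighborhoods of their interior points in the same sense as before, including points with $x$ on the singular leaf itself or $x=p$.

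The second step is to cover neighborhoods of the horizontal singular leaves. Each horizontal side of a rectangle of $\mc R$ contains at most finitely many singular points. For each such singular point $p$ we add one good neighborhood of the form above, using a small rectangle straddling all prongs of $p$ together with an $R'$ just past $p$ along each positive ray. This covers the part of each orbit $(x,\cdot)$ that passes ``through'' the singular horizontal leaf. Only finitely many such sets are added, since $\mc R$ is finite and each rectangle meets finitely many singular points. By \cite[Lemma 4.13]{BFM25}, stabilizers of singular points are cyclic and the $\sigma^p_j$ are $\mathrm{Stab}(p)$--equivariant. Hence the $G$--translates of these sets behave consistently across the different prongs.

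The third step repeats \Cref{claim:good_position} and \Cref{claim:good_position2} and the inductive ``gap-filling'' argument. The towers of rectangles above or below $R$ containing $p$ come from the finite rectangle condition and pigeonhole exactly as before. Uniform hyperbolicity then makes the $\FF^v$--saturations of $a_0\cdots a_n R'_n$ shrink to $\FF^v(p)$. For the downward direction, proper discontinuity of $G$ on $W_h$, from \cite[Corollary 1.11]{BFM25}, pushes the $b_n R'_n$ off to infinity. One caveat applies here. In the upward direction, the filling region $A'$ between $a_iR'_i$ and $R'$ may now contain a singular point of $\FF^h(R)$. In that case we replace the single region by the singular good neighborhood of the second step together with finitely many nonsingular ones; this is possible because there are only finitely many singular points between two compact transversals.

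The main obstacle is this compatibility at singular leaves. Two different points must be checked. First, the filling regions $A'$, $B'$ must stay finite in number when they cross a prong. Second, a ray $\{(p_0,Y)\}$ whose $Y$ passes from single points to $k$-tuples must be covered with no gap at the transition. The approach I would try first is to choose all the $R'_j$ disjoint from singular vertical leaves, so that the only singular crossings occur inside the finitely many neighborhoods added in the second step. The iterative covering then reads identically to the nonsingular case. Compactness of $W_h/G$ follows as before, since translates of finitely many compact sets cover $W_h$.
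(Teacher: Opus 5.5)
Your outline matches the paper: cut along horizontal segments through singularities via \Cref{lem_cut_rectangles_2}, then rerun \Cref{claim:good_position}, \Cref{claim:good_position2} and the gap-filling. You also correctly flag that a filling region $A'$ may now contain a singular point. But your way of handling that crossing does not work. Suppose $p$ lies on a horizontal side of $R$ and $x\in R$ lies on an incoming horizontal prong of $p$. Then the upward orbit $\{(x,Y)\}$, starting from $Y$ meeting $R'$ (past $p$), passes from $k$-tuples through $Y=\{p\}$ to singletons. At that transition it is the \emph{second} coordinate that is at $p$, while $x$ can be anywhere on the prong. Your step-2 sets instead have \emph{first} coordinate in a small neighborhood of $p$, with second coordinate just past $p$ (and ``a small rectangle straddling all prongs of $p$'' is not a rectangle). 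So they do not contain $(A\times A')\cap W_h$ when $A$ has its side on the singular leaf away from $p$. ``Replacing'' $A'$ by them leaves $(x,\{p\})$, and the pairs $(x,y)$ with $y$ just before $p$, uncovered. Since a compact set containing $(x,y_n)$ with $y_n\to p$ must contain $(x,\{p\})$, some set in the finite family must have a singular point in its second-coordinate factor. Your fallback of choosing the $R_j'$ disjoint from singular vertical leaves therefore goes in the wrong direction. It is also generally impossible: for a transitive flow the $G$--translates of a singular vertical leaf are dense in $\mc P$, so they meet every rectangle. And it is beside the point, since the crossing happens along a singular \emph{horizontal} boundary leaf of $\FF^h(R)$.

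The missing observation is that no extra neighborhoods are needed. $A'$ is a subrectangle of the rectangle spanned by $R$ and $R'$, so its only singular points lie on its horizontal sides, exactly like $R$ after the cutting lemma. One therefore takes the good neighborhood to be the closure in $W_h$ of the open good neighborhood $(\mathring A\times\mathring A')\cap W_h$. This closure is compact and automatically contains the points $(x,\{p\})$ and the $k$-tuples along the singular boundary leaf. It can be parametrized by the representative of $Y$ lying in $A'$. These are essentially your step-1 sets, but with $A'$ allowed to contain $p$ on its horizontal side rather than lying entirely past $p$. With that convention the nonsingular argument goes through verbatim, which is what the paper does.
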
 

The proof follows exactly the same outline, but more care is needed because good neighborhoods in $W_h$ look different than in the nonsingular case whenever they contain a prong singularity.

\begin{proof}[Proof of \Cref{prop_singular_case}]
We follow the strategy of \Cref{prop_nonsingular}.   
Using the set of prong singularities as the set $O$ in \Cref{lem_cut_rectangles_2},
 we may assume that we have a collection $\mathcal{R}$ of rectangles witnessing the finite rectangle condition and so that all prong singularities are on horizontal sides.   

With this modification, rectangles in $\mathcal{R}$ can be extended to closed good rectangle neighborhoods as follows: For each $R_i$ which has no singularities on its sides, choose a companion rectangle $R'_i$ so that $V_i = (R_i \times R_i') \cap W_h$ is a closed good neighborhood.  See the left side of \Cref{fig_prong_boxes}.
For any rectangle $R_i$ that has a singularity on top or bottom, we take an open rectangle $\mathring{R}_i$ such that 
$\mathring{V}_i := (\mathring{R}_i \times \mathring{R}_i') \cap W_h$ is an {\em open} good rectangle neighborhood, and let $V_i$ denote its closure in $W_h$.   One should think of $V_i$ as consisting of pairs of points $(x,t)$ in $\mc P$ where $x \in R_i$, and $t \in R'_i$ is on the same leaf as $x$, {\em except} for along the singular boundary leaves, where $t$ is replaced instead with a finite set.  An illustration is given in the right side of \Cref{fig_prong_boxes}; here the finite sets are pairs of 2 points each, on the segments of the prong leaf indicated in blue.   In practice, we will simply parameterize these sets of 2 (or $k/2$, for a $k$-prong singularity), by using the uniqe element in  $R'_i$ as a representative. 

 \begin{figure}[h]
\includegraphics[width=13cm]{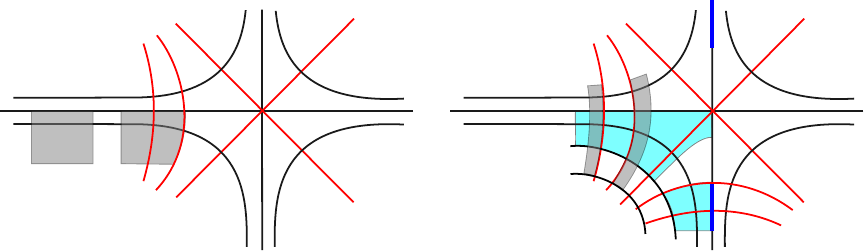}
\caption{Illustration of good rectangle neighborhoods}
 \label{fig_prong_boxes} 
\end{figure}

Now we run the same argument as in \Cref{prop_nonsingular}.   \Cref{claim:good_position} applies verbatim. (Note that when $a_i R_i$ is above a rectangle $R$ with singularity, then $a_i R_i'$ cannot be separated from $a_i R_i$ by the singular leaf.)
Thus, we assume our finite collection are in the form given by the Claim, and use it to add finitely more closed good neighborhoods, each of which has interior of the form: 
 $\mathring{U} = (\mathring{A} \times \mathring{A}') \cap W_h$ where $A = R_1 \cap a_{i}R_{i}$ and $A'_{i}$ is the region of $\FF^h(R)$ bounded between $a_{i}R'_{i}$ and $R'$.   Again, we always take closure in $W_h$.  These, by construction, also have the property that prong singularities occur only on their horizontal boundary sides, not on the vertical sides, so we can treat them in the same way.   
 
One then argues exactly as before to show that the forward orbit of any point in a translate of these finitely many closed good neighborhoods is covered by translates of these good neighborhoods.  The argument for backwards orbits applies verbatim.  
\end{proof} 

\subsection{Proof of reverse implication}  \label{subsec_reverse_implicaiton} 
In this section we prove the following:  
\begin{proposition} \label{prop:only_if}
Let $\phi$ be a pseudo-Anosov flow on a compact manifold.  Then $\phi$ satisfies the finite rectangle condition, the closing property, and has uniformly hyperbolic fixed points.  
\end{proposition}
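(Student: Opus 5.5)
We check the three properties separately for the action of $G=\pi_1(M)$ on the orbit space $\orb_\phi$ of the lifted flow $\wt\phi$ on $\wt M$. We identify $\FF^h,\FF^v$ with the projections of the lifted unstable and stable foliations, as in the discussion after \Cref{def:above}. Throughout we fix a Riemannian metric on $M$ and lift it to $\wt M$. We also use the standard fact that $G$ acts by deck transformations commuting with $\wt\phi$.

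\emph{Closing property.} This is the orbit-space form of the Anosov closing lemma for pseudo-Anosov flows, and we plan to cite the version used in \cite{barthelme2025pseudo}, where \Cref{def:closing} was introduced. Concretely, take $x\in\orb_\phi$ and a lifted orbit through a point $\wt y$. Let $V_i\subset U_i$ be projections of small transverse disks at $\wt y$ of radii $\epsilon_i'\ll\epsilon_i$, with $\epsilon_i'$ given by the shadowing constant. If $gV_i\cap V_i\neq\emptyset$, then some orbit returns $\epsilon_i'$-close to itself after the deck translation $g$. The closing lemma then gives a periodic orbit $\epsilon_i$-shadowing it whose lift is $g$-invariant, so $g$ fixes a point of $U_i$. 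At singular orbits the shadowing only works within a quadrant, or across to an adjacent one when the orbit passes near the prong leaves. Otherwise the shadowing orbit is the singular orbit itself, which accounts for the three alternatives in the definition.

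\emph{Uniformly hyperbolic fixed points.} Hyperbolicity of fixed points is immediate: $g$ fixes $x$ exactly when $x$ is the lift of a periodic orbit, and $g$ acts as the return map on stable and unstable leaves, contracting one and expanding the other. For uniformity, let $R$ be a rectangle and suppose $g_n R$ is above $g_{n-1}R$. Lift $R$ to a transverse rectangle $\wt R\subset\wt M$. The main idea is that $g_n\wt R$ can be compared with the flow image of $\wt R$ through a time $T_n$. Since the vertical saturations are strictly nested, the stable width of $g_n R$ as measured on $\wt R$ is obtained by flowing backward from $g_n\wt R$. By compactness of $M$, the $g_n\wt R$ all have bounded geometry, so their stable widths at the level of $g_n\wt R$ are uniformly bounded. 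Pulling back to the level of $\wt R$ contracts by $e^{-\lambda T_n}$. It therefore suffices to show $T_n\to\infty$, and this is the step I expect to be the main obstacle. The plan is to argue that the $g_n$ are distinct, because strict nesting of saturations forces distinct elements, and then use proper discontinuity: only finitely many translates of the compact set $\wt R$ meet a compact flow-segment neighborhood $\wt\phi_{[-T,T]}(\wt R)$. Hence the heights $T_n$ go to infinity and $\bigcap_n\FF^v(g_nR)$ is a single leaf. The below case is symmetric, with unstable leaves.

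\emph{Finite rectangle condition.} By compactness, cover $M$ by finitely many small flow boxes, each with a transverse rectangle $D_k$. Take their lifts $\wt D_k$ and let $\mc R$ be the collection of their projections. Every orbit of $\wt\phi$ meets some translate of a $\wt D_k$, so translates of $\mc R$ cover $\orb_\phi$. For the covering by rectangles above a given $R=\pi(\wt D_k)$, flow $\wt D_k$ forward by a large time $T$. Each point of $\wt\phi_T(\wt D_k)$ lies in some translate $a\wt D_j$, and by compactness finitely many such translates suffice. I would shrink the $D_j$ relative to slightly larger rectangles $D_j^+$ (used in place of $\mc R$) so that each point of $\wt\phi_T(\wt D_k)$ lies in the \emph{interior} of a translate of some $D_j^+$. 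For $T$ larger than a uniform constant, the exponential contraction and expansion make each such translate, viewed in $\orb_\phi$, above $R$. The same argument with backward flow gives the covering by rectangles below.
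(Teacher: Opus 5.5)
\textbf{Gap in the finite rectangle condition.} Your sketches of the closing property and of uniformly hyperbolic fixed points are acceptable; the paper simply cites \cite{BFM25} for both. In the uniformity sketch, what must shrink is the extent of $g_nR$ across stable leaves, measured along unstable segments. That is what backward flow contracts, so ``stable width'' is the wrong name.

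The finite rectangle argument, however, has a genuine gap. You claim that for $T$ beyond a uniform constant, every translate $aD_j^+$ containing a point of $\wt\phi_T(\wt D_k)$ in its interior is above $R$. This fails near the vertical (stable) sides of $R$. Flowing forward carries the stable sides of $\wt D_k$ to the stable sides of $\wt\phi_T(\wt D_k)$. A translate containing a point on or near such a side in its interior therefore crosses that stable leaf, and then $\FF^v(aD_j^+)\not\subset\FF^v(R)$. Expansion only helps for points at a definite unstable distance $\eta$ from the sides, and the required $T$ blows up as $\eta\to 0$.

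This is not a bookkeeping issue. Suppose $aR_j$ is above $R$ and contains a point $x$ of a vertical side of $R$ lying on a leaf $\ell$. Then $aR_j$ must itself have a vertical side on $\ell$. Otherwise the stable leaves through points of its horizontal segment just beyond $\ell$ lie in $\FF^v(aR_j)\subset\FF^v(R)$, so they would meet $\FF^h(x)$ twice. Chase this side through successive coverings and apply pigeonhole to the finitely many sides of rectangles in $\mc R$. This produces an element preserving $\ell$, which is nontrivial because the composite rectangle is strictly above the one it started from. Hence every side of any witnessing family lies on a leaf fixed by a nontrivial element, i.e.\ the leaf of a periodic orbit. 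Rectangles from a generic flow-box cover can never satisfy the condition.

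\textbf{The missing idea.} The paper's fix is to choose rectangles whose sides pass through periodic orbits. It does this with a Markov partition (Ratner, Brunella, Iakovoglou) whose sides contain prescribed periodic orbits. Let $p$ be a periodic point in the interior of a vertical side $s$ of $R$. Let $h$ be the power of the corresponding deck element that expands $\FF^v(p)$ and contracts $\FF^h(p)$ about $p$. Then $hR$ is above $R$ and contains a full-height strip of $R$ adjacent to $s$.

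Only after both vertical sides are handled this way can a compactness argument like yours cover the rest of $R$; the paper uses iterates of the first return map for this step. The covering from below is symmetric, using the horizontal sides. Periodic sides also take care of singular orbits, where your small flow boxes would not have rectangle transversals.
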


\begin{proof} 
The closing property and uniformly hyperbolic fixed points are standard, and explained in \cite{BFM25}.  For the finite rectangle condition, we will use Markov partitions for the flow.  

 A {\em Markov partition} for $\phi$ is a finite family of pairwise disjoint transverse
rectangles $R_1, \ldots, R_m$ such that, there is a uniform bound $T$ such that for any $x \in M$, there is $t \in [0,T]$ such that $\phi^t(x) \in \bigcup_i R_i$ and
the (forward) first return map $f: \bigcup_i R_i \to \bigcup_i R_i$ is defined and satisfies the following 
\begin{enumerate} 
\item For all $i, j$, if $f(\mathring{R}_i) \cap \mathring{R}_j \neq \emptyset$, then the closure of every connected component of $f(\mathring{R}_i) \cap \mathring{R}_j$ is a vertical subrectangle of $R_j$.  
\item For all $i, j$, if $f^{-1}(\mathring{R}_i) \cap \mathring{R}_j \neq \emptyset$, then the closure of every connected component of $f(\mathring{R}_i) \cap \mathring{R}_j$ is a horizontal subrectangle of $R_j$.
\end{enumerate} 

As shown by Ratner \cite{ratner1973markov} and Brunella \cite{brunella1995surfaces} in the transitive case and extended by Iakovoglou \cite{iakovoglou2025markovian}
to the general case, any pseudo-Anosov flow admits a Markov partition.  In fact, one can take the sides of the rectangles to contain any prescribed finite collection of periodic orbits.  

Given such a Markov partition, we claim that the images in the orbit space of each rectangle provide the desired family.  That their orbits cover ${\mc P}$ is given by the first condition.  To see the covering condition, we use the first return map.
First, note that the condition that each point meets a rectangle in finite time implies that the first return map is surjective.  

Now, fix some lift $\wt R_i$ of each $R_i$ to $\wt M$, and let $R_j^{\mc P}$ denote its image in the orbit space ${\mc P}$.  Since we may choose the sides of $R_i$ to be periodic, following a periodic orbits along a vertical side eventually returns to $R_i$. Thus, there is a power of the first return map taking $R_i$ to intersect itself, necessarily the closure of the intersections of interiors of $R_i$ and its image is a vertical subrectangle.  On the level of $\wt M$, this means that by flowing forward along orbits starting at $\wt R_i$, one eventually meets another translate $g \wt R_i$ of $R_i$, overlapping as a vertical subrectanlge.  On the level of the orbit space, this implies that $g R_i^{\mc P}$ is {\em below }$ R_i^{\mc P}$, and $g^{-1} R_i^{\mc P}$ is above $R_i^{\mc P}$, covering the vertical side containing the periodic orbit.  One can use another translate to cover the other vertical side similarly, and then (using a compactness argument) cover the interior also by finitely many images of rectangles in $\bigcup_j R_j$ under the first return map, which correspond as above to translates of the $R_j^{\mc P}$ under elements of $\pi_1(M)$.  

Flowing backwards, or using $f^{-1}$ instead of $f$, one similarly finds a finite cover of each $R_i^{\mc P}$ by translates of rectangles which are below. 
\end{proof}

\section{Branched covers, veering triangulations, and a dual flow}
\label{sec:veer}
Let $\mc P$ be a bifoliated plane, which we now allow to have odd prongs.  In this section we fix an action of a group $G$ on $\mc P$ not assumed to preserve a leafwise orientation (such an orientation may not even exist), but satisfying the hypotheses of \Cref{th:main}.  From this, we show how to construct a
manifold $M$ with fundamental group $G$ and a pseudo-Anosov flow $\varphi$.  In \Cref{sec:same_action}, we show the action of $G$ on $\orb_\phi$ is conjugate to the original action of $G$ on ${\mc P}$.  

Let $O$ denote the (possibly empty) set of odd prongs in $\mc P$.  To ``unwind" these to even prongs, we do the following. 

\begin{definition} [Orientation branched cover]
Let $\pi \colon  \mc P_\ast \to \mc P$ be the universal orbifold cover of $\mc P$ equipped with an orbifold structure where each point of $O$ is an order 2 cone point.  Let $H$ denote the covering group of $\mc P_\ast \to \mc P$, and $\FF^{v/h}_\ast$ the lifts of the foliations $\FF^{v/h}$ to $\mc P_\ast$.  
\end{definition}

Note that $\pi \colon  \mc P_\ast \to  \mc P$ is a $G$--compatible normal branched cover of bifoliated planes, as in \Cref{sec:branched}.

\begin{remark}
The plane $\mc P_\ast$ can also be described as a two-step cover in terms of the foliated structure of $\mc P$: 
First, pass to the double branched cover $D$, branched over $O$, defined by the orientation homomorphism on $\FF^v$.  Then $\mc P_\ast$ is the universal covering of $D$, so $\pi$ factors through $D$.   
\end{remark} 

We now recall the assumptions on $G$ 
\begin{convention}
For the remainder of this paper, we suppose that $G$ is a torsion-free group acting (topologically) transitively, by foliation-preserving automorphisms of $\mc P$ and preserving an orientation. 
Assume that $G$ has the closing property, uniform hyperbolicity and the finite rectangle condition.  
\end{convention} 

Denote by $G_\ast$ the group of all lifts of $G$ to $\mc P_\ast$.  This sits in an exact sequence  
\[
1 \to  H  \to G_* \to G \to 1.
\]
where $H$ is the covering group of $\mc P \to \mc P_\ast$ as above.  
Let $G_\ast^+$ be the index 2 subgroup of $G_\ast$ preserving orientation 
along leaves of ${\mc P}_\ast$.  

\begin{lemma} \label{lem:torsion_free}
With the setup as above, if $G$ is torsion-free, then so is $G^+_*$.
\end{lemma}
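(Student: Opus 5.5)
Let $\hat g \in G_\ast^+$ be a nontrivial element with $\hat g^n = 1$ for some $n \geq 1$; the plan is to derive a contradiction. Let $g$ be its image in $G$. Since $G$ is torsion-free and $g^n = 1$, we get $g = 1$, so $\hat g \in H$. The problem therefore reduces to showing that $H \cap G_\ast^+$ is torsion-free, i.e. that every nontrivial finite-order element of $H$ reverses the leafwise orientation of $\mc P_\ast$.

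The first step is to identify the torsion of $H$. The group $H$ is the orbifold fundamental group of the plane $\mc P$ with order-$2$ cone points at the closed discrete set $O$, acting properly discontinuously on $\mc P_\ast$. A finite subgroup of $H$ fixes a point of $\mc P_\ast$: for a properly discontinuous action on a plane one can cite Kerékjártó/Brouwer, or use the orbifold structure directly. A point of $\mc P_\ast$ with nontrivial stabilizer lies over a cone point $o \in O$, and its stabilizer is the local group $\ZZ/2$. Hence every nontrivial torsion element of $H$ is conjugate to an involution $\tau$ generating the stabilizer of some lift $\hat o$ of $o$.

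The second step, which is the main point, is to show that such a $\tau$ reverses the orientation of the lifted foliations. Near $o$, the singularity is a $k$-prong with $k$ odd, and the local branched double cover is modeled on $z \mapsto z^2$. So near $\hat o$ we have a $2k$-prong, and $\tau$ is the deck rotation by angle $\pi$ in the local model. This rotation maps each prong of $\FF^v_\ast(\hat o)$ to the prong $k$ positions away, cyclically. Because $k$ is odd, it carries a prong pointing out of $\hat o$ to one whose leafwise orientation points into $\hat o$: going once around a $2k$-prong, leafwise orientations of adjacent prongs alternate in/out, and $k$ odd swaps the parity. Equivalently, the vertical foliation near $o$ is not orientable, and the lift of the loop around $o$ is exactly what trivializes the orientation character. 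I expect this local-model parity check to be the only real content, and I would verify it with a picture of the $2k$-prong. Consequently $\tau \notin G_\ast^+$, and neither is any conjugate of $\tau$, since $G_\ast^+$ is normal of index $2$.

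Combining the two steps, $\hat g \in H \cap G_\ast^+$ is of finite order and nontrivial, hence conjugate to some $\tau$ and therefore orientation reversing on leaves. This contradicts $\hat g \in G_\ast^+$, so $G_\ast^+$ is torsion-free.

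A subtle point I would double-check is the meaning of "preserving orientation along leaves" when the prongs make $\FF^v_\ast$ only locally orientable. Since $\mc P_\ast$ has only even prongs and is simply connected, both foliations are orientable, so $G_\ast^+$ is well defined and the argument above makes sense.
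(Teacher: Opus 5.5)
Your proof is correct and follows the same route as the paper. You reduce to $H$ using torsion-freeness of $G$, identify the torsion in $H$ with the order-two stabilizers of lifts of points of $O$, and show that these involutions reverse the leafwise orientation because the foliations are not locally orientable at odd prongs; your fixed-point argument and the explicit parity check on the lifted $2k$-prong just spell out what the paper asserts in one line each.
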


\begin{proof}
We will show that any torsion element of $G_*$ reverses the orientation along leaves of $\mc P_*$ and hence is not contained in $G^+_*$. To this end, let $g \in G_*$ have finite order. Then because $G$ is torsion-free, $g \in H$. Since $H$ is the covering group of the universal orbifold cover $\pi \colon \mc P_* \to \mc P$, $g$ is an order two element generating the stabilizer of some $p \in \pi^{-1}(O)$. This implies that $g$ reverses the orientation on the leaves of $\mc P_*$ because the leaves of $\mc P$ are not locally orientable at any point of $O$. Hence, $g \notin G^+_*$ and the proof is complete.
\end{proof}

Using this and \Cref{thm_compactness_no_transitive} we have: 
\begin{corollary}  \label{cor_N_psi}
There is an orientable, closed manifold $N$ with $\pi_1(N) =G_\ast^+$ and a transitive pseudo-Anosov flow $\psi$ on $N$ so that $G_\ast^+ \curvearrowright {\mc P}_\ast$ agrees (up to conjugacy) with the orbit space action $G_*^+ \curvearrowright \orb_\psi$. 
\end{corollary}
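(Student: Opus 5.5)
The corollary follows by applying \Cref{thm_compactness_no_transitive} to the action $G_\ast^+ \curvearrowright \mc P_\ast$. We verify its hypotheses, then check orientability of $N$ and transitivity of $\psi$.

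\emph{Hypotheses.} First, $\mc P_\ast$ is a bifoliated plane with only prong singularities.
\begin{itemize}
\item Near a lift of an odd $k$-prong point, the double branched cover turns it into a $2k$-prong.
\item Elsewhere the map is an honest local homeomorphism, so every singularity of $\mc P_\ast$ is even.
\item The orientation homomorphisms of $\FF^h$ and $\FF^v$ on $\mc P \ssm O$ both factor through the same local monodromy around odd prongs. Hence both lifted foliations are orientable on the simply connected plane $\mc P_\ast$.
\item $\mc P_\ast$ has no infinite horizontal product regions, since such a region would project to one in $\mc P$. Near branch points one uses that a product region is simply connected and avoids singularities in its interior, so it maps injectively.
\end{itemize}
By definition $G_\ast^+$ preserves the leafwise orientations, and by \Cref{lem:torsion_free} it is torsion-free. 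By \Cref{prop_pass_to_cover}, $G_\ast$ has uniformly hyperbolic fixed points, the finite rectangle condition and transitivity. Since $\mc P_\ast \to \mc P$ branches only over singular points, \Cref{pass_to_cover2} shows $G_\ast$ has the closing property. By \Cref{rem_finite_index}, all four properties pass to the index-two subgroup $G_\ast^+$.

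\emph{Applying the theorem.} \Cref{thm_compactness_no_transitive} now gives a pseudo-Anosov flow $\psi$ on the compact manifold $N = W_h/G_\ast^+$. Its orbit space is equivariantly isomorphic to $\mc P_\ast$ with the action of $G_\ast^+$. Since $W_h \cong \mathbb{R}^3$ is simply connected and the action is free and properly discontinuous, $\pi_1(N) = G_\ast^+$.

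\emph{Transitivity of $\psi$.} A dense $G_\ast^+$-orbit in the orbit space lifts to a dense set of flowlines in the universal cover, so $\psi$ has a dense orbit. One small point remains: transitivity was established for $G_\ast$, and we need it for the index-two subgroup. Write $\mc P_\ast = \overline{G_\ast^+ x} \cup \overline{gG_\ast^+ x}$. By Baire, one of these closed sets has nonempty interior, and both are then invariant under $G_\ast^+$. I would close the gap with the standard argument: a topologically transitive action on a Baire space without isolated points restricts transitively to finite index subgroups up to a finite union of regular closed sets. Alternatively, I would redo the lifting argument of \Cref{prop_pass_to_cover} directly for $G_\ast^+$, which seems cleaner if available.

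\emph{Orientability of $N$.} I expect this to be the main obstacle. $G$ preserves an orientation of $\mc P$, so $G_\ast$ preserves the lifted orientation of $\mc P_\ast$. The deck group $H$ acts by rotations of angle $\pi$ about lifts of odd prongs, which preserve the plane's orientation. Elements of $G_\ast^+$ also preserve the orientation along the flow lines of $W_h$, because they preserve the orientation of $\FF^h$, which determines the flow direction. Hence they preserve the product orientation on $W_h \cong \mathbb{R}^3$, and $N$ is orientable. The care needed is in checking that the local orientation on $W_h$ built from the plane orientation together with the flow direction is genuinely preserved, in particular near the singular fibers of the construction.
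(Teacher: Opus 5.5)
Your proposal follows the paper's proof. You lift uniform hyperbolicity, the finite rectangle condition and transitivity to $G_\ast$ via \Cref{prop_pass_to_cover}, lift the closing property via \Cref{pass_to_cover2}, descend to $G_\ast^+$ via \Cref{rem_finite_index}, use \Cref{lem:torsion_free}, and apply \Cref{thm_compactness_no_transitive}. Your additional checks are details the paper leaves implicit, and they are fine: only even prongs and orientable foliations on $\mc P_\ast$, $\pi_1(N)=G_\ast^+$, orientability of $N$, and a dense orbit of the action giving a dense flowline. The worry about orientability at singular fibers is unnecessary. A homeomorphism of the connected manifold $W_h$ that preserves the orientation near one nonsingular point preserves it everywhere.

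The part that does not hold together is your transitivity paragraph. You had already invoked \Cref{rem_finite_index} for transitivity of $G_\ast^+$, exactly as the paper does, so within the paper's logic nothing more is needed. Your suspicion of that step is reasonable: topological transitivity does not pass to index-two subgroups in general, since the subgroup may preserve two complementary regions that the full group interchanges. But neither remedy you offer settles it.
\begin{itemize}
\item The regular periodic decomposition you allude to is precisely the possible obstruction, not an argument that it is trivial.
\item A direct lift fails, because $G_\ast^+\wh x$ is in general only one of the two $G_\ast^+$-orbits making up $\pi^{-1}(Gx)$.
\end{itemize}

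A self-contained fix is to apply \Cref{thm_compactness_no_transitive} to $G_\ast^+$ first, which needs no transitivity. Take $x$ with $G_\ast x$ dense and $g\in G_\ast\ssm G_\ast^+$. Let $\alpha,\beta$ be the orbits of $\psi$ corresponding to $G_\ast^+x$ and $G_\ast^+gx$. Then $N=\overline{\alpha}\cup\overline{\beta}$, because the orbit-space projection is open, so preimages of closures are closures of preimages. Points in the interiors of these closures but off $\alpha\cup\beta$ are limits of $\alpha$ or $\beta$ at unbounded times, hence nonwandering. These points are dense in $N$, so $\Omega(\psi)=N$. The spectral decomposition for pseudo-Anosov flows on the connected manifold $N$ then gives transitivity of $\psi$.
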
 

\begin{proof} 
 By \Cref{prop_pass_to_cover}, \Cref{pass_to_cover2}, and \Cref{rem_finite_index}, the action of $G_\ast^+$ is transitive, and has uniformly hyperbolic fixed points, the closing property, and the finite rectangle condition.  By \Cref{lem:torsion_free}, $G^+_\ast$ is torsion free.  Thus, \Cref{thm_compactness_no_transitive} gives the desired conclusion.  
 \end{proof}

\subsection{Orbits killing perfect fits}
\label{sec:pfits}
Our first goal is to use the flow $\psi$ from \Cref{cor_N_psi} to give a veering structure for the action of $G$ on ${\mc P}$ (after removing a suitable collection of orbits).  To do this, we recall the basic set-up of constructing a veering triangulation from an orbit space.  

A {\em perfect fit rectangle} in a bifoliated plane is the image of a properly embedded copy of $[0,1]^2 \ssm \{(0,0)\}$ via a foliation-respecting homeomorphism.  
For a transitive pseudo-Anosov flow $\psi$ on a closed orientable 
$3$-manifold $N$, Fried \cite{fried1983transitive} and Brunella \cite{brunella1995surfaces} show that there is a finite collection of closed orbits of $\psi$ that kill all perfect fits. (This terminology is taken from \cite{landry2025transverse}). This means that after this collection of orbits is lifted to $\wt N$ and projected to $\orb_\psi$,  the resulting $\pi_1(N)$--invariant, closed, discrete subset of $\orb_\psi$ intersects each perfect fit rectangle.
See Tsang \cite[Proposition 2.7]{tsang2022constructing} for a proof (using the language of perfect fits) of the stronger result that this can be achieved with a single orbit.

In our setting, this means that there is a closed, discrete set of points $\kappa_\ast \subset \mc P_\ast$, each fixed by some nontrivial element of $G^+_\ast$ and consisting of finitely many  $G^+_\ast$ orbits, such that $\kappa_\ast$ meets each perfect fit rectangle in ${\mc P}_\ast$. In this case, we also say that 
 ${\mc P}_\ast$ has \emph{no perfect fits relative to $\kappa_\ast$}. 

The next proposition shows how to achieve this property in $\mc P$.

\begin{proposition}\label{prop:npfs}
There is a closed, discrete set of
points $\kappa \subset \mc P$, consisting of finitely many $G$--orbits of points that are fixed by some nontrivial element of $G$, such that ${\mc P}$ has no perfect fits (or infinite strips) relative to $\kappa$. 

Moreover, the stabilizer of each point of $\kappa$ is infinite cyclic. 
\end{proposition}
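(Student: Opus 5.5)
Take $\kappa := \pi(\kappa_\ast) \cup O$, where $\kappa_\ast \subset \mc P_\ast$ is the set provided above via \Cref{cor_N_psi} and Fried--Brunella (or Tsang). It is a finite union of $G^+_\ast$--orbits, each point fixed by a nontrivial element of $G^+_\ast$, and it meets every perfect fit rectangle of $\mc P_\ast$. Adding $O$ is harmless and will be convenient. First I would arrange that $\kappa_\ast$ is $G_\ast$--invariant (equivalently $H$--invariant) by replacing it with $G_\ast \cdot \kappa_\ast$. Since $G^+_\ast$ has index $2$ in $G_\ast$, this is still finitely many $G^+_\ast$--orbits, and it is still closed and discrete: it is the union of $\kappa_\ast$ and one translate of it, and both are closed and discrete.

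\textbf{$\kappa$ is closed, discrete, and made of finitely many $G$--orbits of fixed points.} Closedness and discreteness of $\pi(\kappa_\ast)$ follow because $\pi$ is the quotient by the properly discontinuous $H$--action and $\kappa_\ast$ is $H$--invariant. Finitely many $G^+_\ast$--orbits push forward to finitely many $G$--orbits. Now take $q \in \kappa_\ast$ with $1 \neq \hat g \in G^+_\ast$ fixing $q$. Then $g = \pi_\ast(\hat g)$ fixes $\pi(q)$, and $g \neq 1$ because $G^+_\ast$ is torsion-free (\Cref{lem:torsion_free}) and so meets the torsion group $H$ trivially. For points of $O$ I need that odd prongs are fixed by nontrivial elements. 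I would get this from transitivity and the closing property, which make singular points periodic, as in \cite{BFM25}. Alternatively, any $o \in O$ lifts to a singular point of $\mc P_\ast$, which is fixed by an element of $G^+_\ast$ (orbit space of a pseudo-Anosov flow), and I project that element as above.

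\textbf{No perfect fits relative to $\kappa$.} Let $P \subset \mc P$ be a perfect fit rectangle. Its interior is simply connected, and $P \cup$ its boundary leaves contains no point of $O$ except possibly the corner; if there is one, $P$ already meets $\kappa \supset O$. Otherwise $P$ avoids the branch locus, so it lifts homeomorphically to a perfect fit rectangle $P_\ast \subset \mc P_\ast$. Properness is preserved because $\pi$ is a proper map on the complement of the cone points. Hence $P_\ast$ meets $\kappa_\ast$, and so $P$ meets $\kappa$. Infinite strips are handled in the same way: a lifted strip in $\mc P_\ast$ would contain perfect fits, or is excluded directly in orbit spaces of flows on closed manifolds. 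In either case it is killed by $\kappa_\ast$, and its image is killed by $\kappa$.

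\textbf{Stabilizers are infinite cyclic.} For $x \in \kappa$, choose a lift $x_\ast$. I would show $\mathrm{Stab}_G(x)$ is torsion-free and acts faithfully on the germs of the leaves at $x$, with hyperbolic fixed points. Passing to the finite index subgroup preserving each prong and its orientation, the map to $\mathbb{R}$ given by the "multiplier" on an expanding ray is injective by hyperbolicity. This adapts \cite[Lemma 4.13]{BFM25}: two elements with the same translation action on a ray would have a non-hyperbolic composite. Its image is discrete by uniform hyperbolicity or closing, so this subgroup is cyclic. A torsion-free virtually cyclic group is infinite cyclic, so $\mathrm{Stab}_G(x) \cong \ZZ$. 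Alternatively, $\mathrm{Stab}_G(x)$ contains the image of $\mathrm{Stab}_{G^+_\ast}(x_\ast) \cong \ZZ$ (periodic orbit in $N$) with index at most $2$, and torsion-freeness of $G$ again gives $\ZZ$.

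The main obstacle is the lifting of perfect fits and strips through the branch locus, in particular properness of the lift near odd prongs on the boundary leaves; including $O$ in $\kappa$ should reduce the trouble to the corner case.
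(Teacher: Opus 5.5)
Your route is the paper's. You project $\kappa_\ast$ to $\mc P$, add singular points, lift a perfect fit rectangle that misses the branch locus, and get $\mathrm{Stab}_G(x)\cong\ZZ$ from an infinite cyclic subgroup of index at most $2$ plus torsion-freeness. Your second stabilizer argument is exactly the paper's exact sequence $1\to H(p_\ast)\to G_\ast(p_\ast)\to G(p)\to 1$; the first, Hölder-type route leaves discreteness unjustified, but you don't need it.

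However, several justifications treat $\pi\colon\mc P_\ast\to\mc P$ as a finite-degree cover, which it is not. The claim that $G^+_\ast$ ``meets the torsion group $H$ trivially'' is false. $H$ is generated by the involutions fixing points of $\pi^{-1}(O)$, and each of these reverses leafwise orientation (proof of \Cref{lem:torsion_free}). So if $a,b$ are involutions fixing lifts of two distinct points of $O$, then $ab$ is an infinite-order element of $H\cap G^+_\ast$, and $G^+_\ast\to G$ has a large kernel.

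The conclusion you want still holds, but for a different reason. Suppose $1\neq\hat g\in G^+_\ast$ fixes $q$ and maps to $1\in G$. Then $\hat g\in H\cap\mathrm{Stab}(q)=H(q)$, which is finite (of order at most $2$) because $H$ acts properly discontinuously. This contradicts torsion-freeness of $G^+_\ast$. The same observation is what makes $G^+_\ast(x_\ast)\to G(x)$ injective in your stabilizer argument.

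Similarly, $\pi$ is not proper on $\mc P_\ast\ssm\pi^{-1}(O)$, since the deck group is infinite. You don't need it, though: properness of the lifted perfect fit follows from properness of $P$ in $\mc P$ and continuity of $\pi$ alone.

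You also never check that $O$ consists of finitely many $G$-orbits, which the statement requires once $O\subset\kappa$. The paper gets this from compactness of $N$. The flow $\psi$ has finitely many singular periodic orbits, so $\mc P_\ast$ has finitely many $G^+_\ast$-orbits of singular points, and these project onto the singular points of $\mc P$.

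The paper in fact puts \emph{all} singular points of $\mc P$ into $\kappa$, not only $O$. The statement does not demand this, but it is used afterwards: a maximal rectangle must have a point of $\kappa$ in the interior of each side, and $\wh\kappa_\phi$ should contain all singular orbits. By the same finiteness it costs nothing to add them.

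Finally, for strips: a lifted product region does not ``contain perfect fits''. Your other option is the one that works: apply the fact that orbit spaces of pseudo-Anosov flows on closed manifolds have no product regions to a lift of a strip avoiding $O$.
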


\begin{proof}
Let $\kappa'$ be the image of $\kappa_\ast$ under the branched covering map $\pi \colon  \mc P_\ast \to  \mc P$.
Observe that this consists of finitely many $G$ orbits, and has no perfect fit rectangles relative to $\kappa'$, since any perfect fit rectangle disjoint from $\kappa'$ would lift to a perfect fit rectangle in $\mc P_\ast$, disjoint from $\kappa_\ast$.  
Since $N$ is compact, there are also only finitely many orbits of prong singularities under $G^+_\ast$, which form (by definition) a closed, discrete set.  Let $\kappa$ denote the union of $\kappa'$ with the projection of these singularities; again this consists of finitely many $G$-orbits in $\mc P$.  

It remains to show that each $p \in \kappa$ has infinite cyclic stabilizer. 
If $p_* \in \mc P_*$ with $\pi(p_*) = p$, then its $G_*$ stabilizer $G_*(p_*)$ fits into the sequence:
\[
1 \to H(p_*) \to G_*(p_*) \to G(p) \to 1.
\]
The index $2$ subgroup $G^+_*(p_*) \le G_*(p_*)$ is infinite cyclic because it is nontrivial by assumption and nontrivial stabilizers of $G^+_* \curvearrowright \mc P_*$ are infinite cyclic because this action is conjugate to $G^+_* \curvearrowright \orb_\psi$ (\Cref{cor_N_psi}). Hence, $G_*(p_*)$ is virtually cyclic, and because $H(p_*)$ is finite, $G(p)$ is also virtually cyclic. Because $G$ is torsion-free (\Cref{lem:torsion_free}), $G(p)$ is infinite cyclic and so the proof is complete.
\end{proof}

\subsection{The veering triangulation} 
\label{sec:veering}
A {\em veering triangulation} is an ideal triangulation of a cusped 3-manifold, built from combinatorial data associated to certain group actions on bifoliated planes. Rather than directly use their general combinatorial definition, we will instead rely on the construction introduced by Agol--Gu\'eritaud and recorded in \cite{LMT21, schleimer2024loom}.

We will specifically apply the Agol--Gu\'eritaud construction 
of a veering triangulation to the action of $G$ on $\mc P$ relative to $\kappa$.  Since our plane $\mc P$ is not yet known to be the orbit space of a pseudo-Anosov flow (as in \cite[Section 4]{LMT21} or \cite[Chapter 2]{tsang2023veering}) and is not itself a \emph{loom space} as in \cite{schleimer2024loom},
we cannot quote results verbatim to show this construction applies.  So, we instead give a sketch of the construction to show that it works in our setting. The reader will find the story similar to the presentation in the references above and can consult them for additional details.   Throughout, we will still refer to this as the \emph{Agol--Gu\'eritaud construction}.

\begin{remark}
As an alternative approach, it is possible to show that the universal cover of $\mc P \ssm \kappa$ \emph{is} a loom space, so one could produce there an infinite cover of the desired veering triangulation. However, since we are interested in constructing the manifold whose fundamental group is $G$ itself, we have chosen to proceed directly from the action of $G$ on $\mc P$. 
\end{remark}

A (closed) rectangle in $\mc P$ is \emph{maximal} (or a \emph{tetrahedra rectangle}) relative to $\kappa$ if it is maximal
among all rectangles whose interiors are disjoint from $\kappa$.  Since $\kappa$ is fixed, we typically just call these maximal rectangles. Note that a rectangle is maximal if and only if it contains a point of $\kappa$ (possibly a singularity) in the interior of each of its sides. 

\begin{figure}[htbp]
\begin{center}
\includegraphics{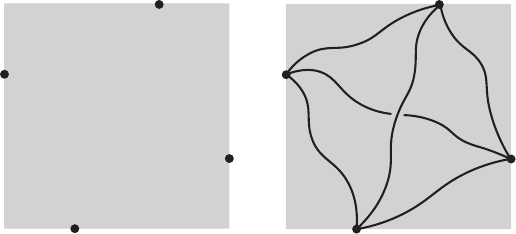}
\caption{A maximal rectangle $R$ and its tetrahedron $t_R$. Note that its bottom edges is drawn below its top edge. Figure from \cite{LMT21}.}
\label{LMT_tet}
\end{center}
\end{figure}

A {\em ideal tetrahedron} is a (simplicial) tetrahedron with its vertices removed.  
For each maximal rectangle $R$ in $\mc P$ (relative to $\kappa$), we associate an ideal tetrahedron $t_R$, whose ideal vertices are points of $\kappa$. See \Cref{LMT_tet}. This tetrahedron has a \emph{bottom edge} joining the points of $\kappa$ along the vertical sides of $R$ and a \emph{top edge} joining the points of $\kappa$ along the horizontal sides of $R$. We coorient the faces of $t_R$ so that the faces containing the bottom edge (called the \emph{bottom faces}) point into $t_R$ and the faces containing the top edge (the \emph{top faces}) point out of $t_R$.

Define
\[
\wt V := \bigcup_{R} t_R / \sim
\]
where the union is over all maximal rectangles and $\sim$ denotes the following face pairing of the tetrahedra:
a face of tetrahedron $t_R$ is paired with a face of a tetrahedron $t_{R'}$ if and only if the vertices of each face correspond to the same three points of $\kappa$ in $\mc P$. The ends of $\wt V$, which are called \emph{cusps}, are in natural bijection with the points of $\kappa$. We will soon see that $\wt V$ is an ideal triangulation of a $3$-manifold and so we refer to it informally in the meantime as an {\em ideal triangulation}. This fact can also be verified directly from the local structure of maximal rectangles in $\mc P$.

\smallskip
Since the action of $G$ preserves $\kappa$, it also preserves the set of maximal rectangles as well as the pairing relation. Hence, there is a well-defined simplicial action $G\curvearrowright \wt V$. Using the hyperbolic dynamics, it is easy to see that this action preserves no simplices (recall vertices have been removed) and so the quotient $\wt V /G$ is an ideal triangulation.  Let $V$ denote $\wt V /G$.

\begin{proposition}
\label{prop:its_veering}
The ideal triangulation $V$ is a veering triangulation 
 of an oriented $3$--manifold with torus cusps and finitely many tetrahedra.
\end{proposition}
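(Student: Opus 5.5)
We need to show three things: that $\wt V$ is a $3$-manifold homeomorphic to $\mathbb{R}^3$ minus nothing (an ideal triangulation of a simply connected $3$-manifold whose cusps correspond to $\kappa$), that $G$ acts freely and properly discontinuously with finitely many orbits of tetrahedra, and that the veering condition holds on edges. The plan is to first establish local finiteness and the local structure around edges and faces, then pass to the quotient, and finally verify the veering combinatorics.

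\textbf{Step 1: local structure in $\mc P$.} Edges of $\wt V$ correspond to pairs of points of $\kappa$ spanning an \emph{edge rectangle} (a rectangle with the two points at opposite corners and no points of $\kappa$ in its interior). Faces correspond to triples spanning a face rectangle. Using that $\mc P$ has no perfect fits or infinite strips relative to $\kappa$ (\Cref{prop:npfs}), every edge rectangle can be enlarged horizontally until it hits $\kappa$ on its vertical sides, and vertically until it hits $\kappa$ on its horizontal sides. This shows every face lies in exactly two tetrahedra, one above and one below, by stretching the face rectangle in the two directions. The tetrahedra around an edge are then the maximal rectangles containing the edge rectangle. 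These are linearly ordered from the one where the edge is top to the one where it is bottom, via the two fans on either side, and there are finitely many of them because $\kappa$ is discrete. This gives the link of each edge as a circle, so $\wt V$ is a $3$-manifold away from the cusps.

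To identify $\wt V$ with $\mathbb{R}^3$ with cusps removed, I would follow \cite{LMT21, schleimer2024loom}. One builds a map from $\wt V$ to $\mc P \ssm \kappa$ (or to its universal cover), such as the projection sending each tetrahedron to its rectangle. One then shows fibers are lines, or, more simply, that the link of each cusp $p \in \kappa$ is a plane. The link of $p$ is triangulated by the rectangles with $p$ on a side, organized into ladders in each quadrant. Since the stabilizer of $p$ is infinite cyclic (\Cref{prop:npfs}) and acts by a hyperbolic map, the link quotient is a torus. Simple connectivity of $\wt V$ would follow from upward/downward flip sequences sweeping through $\mc P$.

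\textbf{Step 2: the quotient.} Freeness of the action on simplices follows from hyperbolic fixed points: an element fixing a tetrahedron permutes the four points of $\kappa$ on its sides. Some power then fixes all four, and hence fixes the rectangle with its corners. This contradicts hyperbolicity, since a nontrivial element cannot fix two points on a leaf. Torsion-freeness of $G$ then removes the finite-order case.

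Finiteness of the number of tetrahedra is where the finite rectangle condition enters. Each maximal rectangle contains a point of $\kappa$ on each side, and $\kappa$ has finitely many $G$-orbits. I would argue that, up to $G$, there are finitely many maximal rectangles with a given point $p \in \kappa$ on their bottom side. Such rectangles are indexed by the ``ladder'' near $p$ and repeat under the cyclic stabilizer of $p$. Alternatively, and perhaps more simply, the compactness of the manifold $N$ from \Cref{cor_N_psi} gives that the Agol--Guéritaud triangulation for $\mc P_\ast$ relative to $\kappa_\ast$ is finite. Maximal rectangles in $\mc P$ relative to $\kappa$ avoid branch points in their interiors, since $O \subset \kappa$, so they lift to $\mc P_\ast$. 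This transfers finiteness once one checks that $\kappa$ is lifted into a superset of $\kappa_\ast$; one can simply enlarge $\kappa_\ast$ to $\pi^{-1}(\kappa)$.

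Orientability of $V$ follows because $G$ preserves the orientation of $\mc P$ and the upward coorientation of faces. Together these orient each tetrahedron consistently.

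\textbf{Step 3: veering.} Edges are colored red or blue according to whether their edge rectangle has points of $\kappa$ at the NE/SW or NW/SE corners; this is well-defined because $G$ preserves orientation. The transverse taut structure comes from the top/bottom coorientations. The veering condition around each edge is the standard fan analysis from Step 1.

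I expect the main obstacle to be Step 1's global claim that $\wt V$ is a simply connected $3$-manifold with plane cusp links. This requires carefully reproducing the Agol--Guéritaud arguments in a plane that is not a priori an orbit space. The absence of perfect fits and infinite strips relative to $\kappa$ is the key input replacing loom-space axioms.
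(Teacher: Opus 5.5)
Your architecture is essentially the paper's first route, a direct Agol--Gu\'eritaud verification as in \cite[Lemma 4.5]{LMT21}, with the paper's shortcut through $\mc P_\ast$ borrowed for finiteness. But Step 1 contains a genuine error, and it is exactly the step you flag as the main obstacle. $\wt V$ is \emph{not} simply connected, and the link of a cusp $p\in\kappa$ is \emph{not} a plane.

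The flat triangles at $p$ are grouped into one ladder for each half-leaf of $\FF^h$ or $\FF^v$ at $p$, not one per quadrant. Each ladder is a bi-infinite strip, and going once around $p$ the ladders close up cyclically along their ladderpoles. So the link is an annulus $S^1\times\mathbb{R}$. The paper says explicitly that in the truncated model of $\wt V$ ``the cusps are now cylinders'', and the core of this annulus is the slope $s_T$ used later for Dehn filling.

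You are conflating $\wt V$ with the universal cover of $V$. $\wt V$ is built from $\mc P\ssm\kappa$. The universal cover of $V$ would be built from the universal cover of $\mc P\ssm\kappa$, and it has planar cusp links with $\ZZ^2$ stabilizers. A posteriori, $\wt V$ is $\wt M$ minus a discrete family of lifted closed orbits. Your own sentence exposes the inconsistency: a plane modulo a free $\ZZ$-action is an open annulus, never a torus. So ``plane link plus infinite cyclic stabilizer'' cannot produce torus cusps, and an attempt to prove simple connectivity by sweeping with flips must fail.

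The repair is to drop simple connectivity, which the proposition never uses.
\begin{itemize}
\item Your fan analysis already gives a $3$-manifold away from the ideal vertices: each face lies in exactly two tetrahedra, and each edge link is a finite circle.
\item Your hyperbolicity and torsion-freeness argument, applied to faces and edges as well as tetrahedra, together with local finiteness, gives a free and properly discontinuous action.
\item The cusps of $V$ are tori because $\mathrm{Stab}(p)\cong\ZZ$ acts on the annular link of $p$ freely, preserving orientation, and cocompactly. Cocompactness follows from finiteness of tetrahedra mod $G$, or from your ladder-shift count.
\end{itemize}

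Note also that the lift you set up for finiteness is what does all the work in the paper's proof. $G_\ast^+\curvearrowright \mc P_\ast$ is a genuine orbit-space action, and $\pi^{-1}(\kappa)$ is a finite union of orbits of periodic points killing perfect fits. So \cite[Theorem 4.7]{LMT21} shows directly that $\wt V_\ast/G_\ast^+$ is a finite oriented veering triangulation with torus cusps. Then $V=\wt V_\ast/G_\ast$ is its quotient by the involution $G_\ast/G_\ast^+$, which acts freely and preserves orientation and coorientations. That one step replaces both your Step 1 and your Step 3.
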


The coorientations on the faces of $V$ determine a natural smoothing of the $2$-skeleton of $V$ making it into a branched surface, where the branch locus is exactly the set of edges of $V$. See the left-hand side of \Cref{fig:tet_ladder}. We will not use this branched surface structure directly, but we will refer to the induced train track structure on the cusps of $V$ in \Cref{sec:slopes}.

\begin{proof}
This can be verified directly, following exactly the same proof as in \cite[Lemma 4.5]{LMT21}, by using the condition that $\mc P$ has no perfect fits (or product regions) disjoint from $\kappa$ together with the facts that $\kappa$ contains finitely many $G$-orbits and each point in $\kappa$ has infinite cyclic stabilizer (see \Cref{prop:npfs}). 

Alternatively, here is a shortcut in our setting. By construction, $O \subset \kappa$ and so the branched cover $\pi \colon \mc P_\ast \to \mc P$ restricts to an honest covering space $\mc P_\ast \ssm \kappa_\ast \to  \mc P \ssm \kappa$. 
The associated cover $\wt V_d \to \wt V$ can be obtained by applying the Agol--Gu\'eritaud construction directly to $ \mc P_\ast$ (relative to $\kappa_\ast$). However, the action $G^+_d \curvearrowright \mc P_\ast$ is already known to be conjugate to the action on the orbit space of a transitive pseudo-Anosov flow on a compact manifold by \Cref{cor_N_psi}. This is the setting of the original Agol--Gu\'eritaud construction (as exposited in \cite{LMT21}), and so we may apply  \cite[Theorem 4.7]{LMT21} to see that $\wt V_\ast / G^+_\ast$ is a veering triangulation of an oriented $3$--manifold with torus cusps and finitely many tetrahedra. Since $\wt V / G = \wt V_\ast /G_\ast$, this together with the fact that $G_\ast/G_\ast^+$ acts on $\wt V_\ast / G^+_\ast$ preserving its orientation,
establishes the same for $V = \wt V / G$. 
\end{proof}

We refer the reader to \cite[Section 4]{LMT21} for an explanation of how the `veering structure' arises from the above construction.
We remark that $\wt V_\ast / G^+_\ast$ in the preceding proof is what Parlak calls the `edge-orientation double cover' of $V$ \cite{parlak2021taut}.

\subsection{Dehn filling on cusps}
\label{sec:slopes}
Our next goal is to prove the following: 
\begin{proposition}
\label{prop:veering_th}
For each torus cusp $T$ of $V$, there is a slope $s_T$ such that 
the closed manifold $M$ obtained by Dehn filling $V$ along the slopes $\{s_T\}$ supports a transitive pseudo-Anosov flow $\varphi$ which has a collection of closed orbits $\wh \kappa_\phi$ of $\varphi$ (containing all singular orbits) so that: 
\begin{enumerate}
\item there is an embedding $V \subset M$ transverse to $\varphi$ such that $M \ssm \wh \kappa_\varphi = V$,
\item  this
lifts to an embedding $\wt V \subset \wt M$ that is equivariant with respect to a natural isomorphism $G = \pi_1(M)$. Here, $\wt M$ is the universal cover of $M$.
\item $V$ can be recovered by applying the Agol--Gu\'eritaud construction to the orbit space action $\pi_1(M) \curvearrowright \orb_\varphi$ relative to $\kappa_\phi$. Here $\kappa_\phi$ is the projection to $\orb_\varphi$ of the preimage of $\wh \kappa_\phi$ in $\wt M$.
\end{enumerate}
\end{proposition}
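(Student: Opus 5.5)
The plan is to pass through the edge-orientation double cover and use equivariance to descend. By \Cref{cor_N_psi}, $G_\ast^+ \curvearrowright \mc P_\ast$ is conjugate to the orbit space action of a transitive pseudo-Anosov flow $\psi$ on a closed oriented manifold $N$ with $\pi_1(N) = G_\ast^+$. Moreover, $\kappa_\ast$ is (after enlarging, as in \Cref{prop:npfs}, to $\pi^{-1}(\kappa)$, which is still finitely many $G_\ast^+$-orbits of periodic points) the projection of a finite collection $\wh\kappa_\psi$ of closed orbits of $\psi$ that kill perfect fits and contain the singular orbits. For this setting, the results of Landry--Minsky--Taylor (\cite[Theorem 4.7]{LMT21} and the accompanying discussion) give three things. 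First, $V_\ast := \wt V_\ast/G_\ast^+$ embeds in $N$ transversely to $\psi$ with complement $\wh\kappa_\psi$. Second, this lifts to an equivariant embedding $\wt V_\ast \subset \wt N$. Third, the Agol--Gu\'eritaud construction applied to $\orb_\psi$ relative to $\kappa_\ast$ recovers $V_\ast$. Thus $N$ is the Dehn filling of $V_\ast$ along the \emph{ladderpole-type} slopes determined by the meridians of $\wh\kappa_\psi$. These slopes can be read off intrinsically from the cusp train tracks: the meridian of a cusp is the boundary of a small disk in $\mc P_\ast$ around the corresponding point of $\kappa_\ast$.

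Next I would descend to $V = \wt V/G$. The group $Q = G_\ast/G_\ast^+ \cdot$ (more precisely $G_\ast$ acting on $\wt V_\ast$, with quotient $V$) acts on $V_\ast$ by simplicial, orientation-preserving homeomorphisms, as noted in \Cref{prop:its_veering}. On each cusp torus $T$ of $V$, I define $s_T$ as the image of the intrinsic slope above: the loop in $\partial V$ encircling the point $p \in \kappa$ once in $\mc P$, i.e.\ the boundary of a small disk around $p$ in $\mc P$, taken modulo the infinite cyclic stabilizer $\mathrm{Stab}_G(p)$ from \Cref{prop:npfs}. For a point of $O$ this loop lifts to twice the corresponding loop upstairs, which is still a filling slope compatible with the cover. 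I then check that the filling slopes upstairs are the full preimages of the $s_T$. Given that, the deck action of $G_\ast/G_\ast^+$ on $V_\ast$, together with the relevant $H$-action, extends over the filling solid tori. Here I need freeness to get a manifold $M$ with $\pi_1(M)=G$. This uses torsion-freeness of $G$: any finite-order elements of $H$ fix only points of $\pi^{-1}(O)$, and these become cores of filled cusps, which I handle by filling downstairs with slope $s_T$ directly rather than quotienting.

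For the flow, the cleanest route avoids quotienting $\psi$ and instead applies a result on veering triangulations. A veering triangulation together with filling slopes meeting each ladderpole curve at least twice (in the sense of Schleimer--Segerman and Tsang, \emph{cf.}\ \cite{SS3, tsang2023veering}) yields a pseudo-Anosov flow on the filled manifold transverse to $V$, with the filled cores as closed orbits. I would check the intersection condition for $s_T$ by lifting to $V_\ast$, where it holds because those slopes come from the genuine flow $\psi$. Transitivity also holds there because $\psi$ is transitive. This gives (1).

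For (2), the fundamental group computation comes from \Cref{prop:veering_th}'s setup. Filling kills exactly the stabilizers of cusps, and $\pi_1(\mc P\ssm\kappa)$-type relations give $\pi_1(M)\cong G$. This can be seen by comparing $\wt V \subset \wt M$ with $\mc P$: $\wt M$ is obtained from $\wt V$ by gluing in lines over $\kappa$. For (3), I would project the lifted embedding $\wt V \subset \wt M$ to $\orb_\phi$, which identifies maximal rectangles relative to $\kappa_\phi$ with tetrahedra, as in LMT.

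I expect the main obstacle to be the descent of the filling and of the flow through the branched cover at points of $O$, where $H$ has torsion. Concretely, the issue is showing that $\phi$ exists on $M$ itself and not merely as an orbifold quotient of $\psi$. I would first try to handle this by working entirely with the veering-triangulation existence theorem on $V$, rather than quotienting $\psi$ by a non-free action.
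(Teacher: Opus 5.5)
Your skeleton matches the paper's. You fill $V$ directly along $s_T$ (the loop once around $p\in\kappa$, i.e.\ the core of the cusp cylinder of $\wt V$), apply the veering-to-flow existence theorem, and identify $\pi_1(M)$ with $G$ by covering theory. The detour through $V_\ast$, $N$ and $\psi$ is not needed.

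\textbf{The gap is in the one hypothesis you must verify.} The criterion in \Cref{thm_AT5.1} is $|\langle s_T,\mc L_T\rangle|\geq 2$ with $\mc L_T=\frac12[l_T]$. It is not that $s_T$ meets each ladderpole curve twice. When $\mathrm{Stab}_G(p)$ fixes each prong of $p$, the meridian crosses each ladderpole curve exactly once, so your version fails for the very slopes you want. The check also does not descend from $V_\ast$. Over $p\in O$ the cusp torus $T_\ast$ double covers $T$ with $s_{T_\ast}\mapsto 2s_T$, so the pairing at $T$ is half that at $T_\ast$ ($k$ versus $2k$ for a $k$-prong $p$). Hence ``it holds upstairs because $\psi$ is a genuine flow'' only gives $\geq 1$ downstairs, which is exactly the excluded $1$-prong case. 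The paper instead counts directly in $V$ (\Cref{lem:slope}). The meridian at a $k$-prong crosses $2k$ alternating ladderpoles, so $\langle s_T,\mc L_T\rangle=k\geq 2$, since there are no $1$-prongs. Likewise, transitivity of $\varphi$ cannot be inherited from $\psi$, because $\varphi$ is not built as a quotient of $\psi$. Transitivity is instead part of the conclusion of \Cref{thm_AT5.1}.

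\textbf{Items (1) and (3).} \Cref{thm_AT5.1} does not by itself make $V$ transverse to $\varphi$. The paper first gets (3) from \cite[Theorem 2.4.1]{tsang2023veering}. That step uses the Agol--Tsang conclusion that $\varphi$ has no perfect fits relative to the cores; without it the AG construction relative to $\kappa_\varphi$ is not even available. The paper then gets (1) from \cite[Theorem 5.1]{LMT21}. Your step ``project tetrahedra to maximal rectangles'' presupposes exactly what Tsang's theorem proves.

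\textbf{Item (2).} Filling kills the meridians $s_T$, which already lie in $\ker(\pi_1(V)\to G)$ because they lift to closed loops in $\wt V$. The stabilizers $G(p)\cong\ZZ$ are not killed; they survive as the classes of the core orbits. What is needed is that this kernel is normally generated by the $s_T$. Justifying it via ``$\wt M$ is $\wt V$ plus lines over $\kappa$'' assumes the lifted embedding you are trying to build.

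Finally, ``ladderpole-type slopes'' is a misnomer. The filling slopes are transverse to the ladderpoles, and the ladderpole slope is precisely what the hypothesis excludes.
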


Choosing the slopes $\{s_T\}$ so that the filled manifold has $\pi_1(M) \cong G$ is not too difficult, 
the point that requires significant care is to produce a transverse flow $\varphi$ which is pseudo-Anosov, and from which we can then prove the other properties listed above.  We will use the following result of Agol--Tsang (but see \Cref{rem_correspondence}): 

\begin{theorem}[Theorem 5.1 of \cite{AgolTsang}] \label{thm_AT5.1}
Suppose $\mr M$ admits a veering triangulation and let $\mc L_T \in H_1(T)$ 
denote the ladderpole class on each torus cusp $T$.
Then the filling $M$ of $\mr M$ along the slopes $(s_T)$ admits a transitive pseudo-Anosov flow if 
$| \langle [s_T], \mc L_T \rangle | \geq 2$ 
for all $T$.

Furthermore, there are closed orbits $\gamma_T$ isotopic to the cores of the filling solid tori, such
that each $\gamma_T$ is a prong with 
$\langle [s_T], \mc L_T \rangle$-many stable rays,
and $\varphi$ has no perfect fits relative to $\{\gamma_T\}$.
\end{theorem}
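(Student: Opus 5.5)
The plan is to build the flow in two stages: first a flow on the cusped manifold $\mr M$ canonically dual to the veering triangulation, then a model flow in each filling solid torus. Pseudo-Anosovness is verified through expansivity, so that \cite{inaba1990nonsingular, paternain1993expansive} applies exactly as in \Cref{sec_compactness}.

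\textbf{Step 1: the dual flow on $\mr M$.} Start from the stable and unstable branched surfaces $B^s, B^u$ dual to the veering triangulation of $\mr M$. Their intersection is the flow graph of Landry--Minsky--Taylor, which is a directed graph carried transversely to the $2$-skeleton. Thicken the tetrahedra into flow boxes: inside each tetrahedron, flow from the two bottom faces to the two top faces. The boxes are foliated so that the stable (resp.\ unstable) direction is the one whose transverse width contracts in forward (resp.\ backward) time; this is determined by the veering colours. Gluing these boxes along faces gives a flow $\varphi_0$ on $\mr M$ minus a neighbourhood of the cusps, transverse to the triangulation. Near each cusp torus $T$, the boundary inherits the ladder decomposition from the upward and downward ladders. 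Under this decomposition $\varphi_0$ restricted to a collar $T\times[0,\epsilon)$ is a suspension-type flow. Its stable and unstable leaves meet $T$ in curves parallel to the ladderpoles, so there are closed annuli $A_1,\dots,A_{2m}$ alternating between stable and unstable whose cores are the ladderpole curves representing $\mc L_T$.

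\textbf{Step 2: filling.} Fix $s_T$ with $n := |\langle [s_T], \mc L_T\rangle| \ge 2$. A meridian disk bounded by $s_T$ crosses the stable ladderpole curves $n$ times, up to isotopy. Inside the filling solid torus, insert the standard model neighbourhood of a periodic orbit with $n$ stable and $n$ unstable prongs. This is the mapping torus of an $n$-prong germ composed with a rotation, where the rotation amount is fixed by the unimodular pair $(s_T, \text{longitude})$. Match its boundary foliations with the ladder foliation on $T$; this matching is possible precisely because the boundary of the model has $n$ stable and $n$ unstable segments along a meridian. The condition $n\ge 2$ is used exactly here: if $n=1$ the core would be a $1$-prong, which is not allowed, and if $n=0$ no matching exists. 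The core $\gamma_T$ is then a closed orbit isotopic to the core of the filling, with $n$ stable rays, and this proves the ``furthermore'' clause about prongs.

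\textbf{Step 3: pseudo-Anosov, transitive, no perfect fits.} Lift the glued flow $\varphi$ to $\wt M$ and study its orbit space. Points of the orbit space not on lifts of the $\gamma_T$ are coded by bi-infinite transverse paths through the tetrahedra of $\wt{\mr M}$, that is, by nested sequences of maximal rectangles. The veering property forces the heights and widths of these nested rectangles to shrink to zero, since veering triangulations carry no infinite product regions. This gives expansivity away from the cores, and the model neighbourhoods give expansivity near them. Hence $\varphi$ is expansive and, by \cite{inaba1990nonsingular, paternain1993expansive}, pseudo-Anosov. For transitivity, the flow graph of a veering triangulation is strongly connected (its recurrent part is everything), so a path that visits every edge infinitely often codes a dense orbit. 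Finally, each face of the triangulation is a maximal rectangle with lifted cores on all four sides, so no perfect fit rectangle can avoid the lifted cores. Hence $\varphi$ has no perfect fits relative to $\{\gamma_T\}$.

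The main obstacle is Step 1 together with the expansivity in Step 3. The naive dual object is only a semiflow along the branched surfaces, and orbits can merge at the branch locus, so making it a genuine flow on a $3$-manifold that remains expansive is the hard part. I would first try to blow up the branch locus of $B^s\cap B^u$ equivariantly and then collapse the resulting product regions back to orbits. Uniform shrinking of the nested tetrahedra rectangles, which is the analogue here of uniform hyperbolicity (\Cref{def:uniformlyhyp}), is what should make this collapse produce a flow rather than a semiflow.
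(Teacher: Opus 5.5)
The paper does not prove this statement. It imports it from Agol--Tsang, with Schleimer--Segerman as an alternative source, so your argument has to stand on its own. Your overall architecture is reasonable: build something on the cusped manifold, glue in $n$-prong models, and certify pseudo-Anosov via expansivity. Your prong count is also right. But there is a genuine gap exactly where the theorem's content lies, namely Step~1.

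Flowing through each tetrahedron from its two bottom faces to its two top faces gives a flow transverse to the $2$-skeleton of \emph{any} taut ideal triangulation. Such a flow is in general not expansive. Declaring the boxes ``foliated so that the stable width contracts'' has no content for a topological gluing, since there is no metric and no compatibility condition across faces. Consequently:
\begin{itemize}
\item there are not yet any stable or unstable leaves meeting the cusp tori along ladderpoles;
\item there is nothing dynamical to match with the boundary of an $n$-prong model in Step~2. That matching must be a matching of flows (e.g.\ of a blown-up flow tangent to $T$), not merely of foliations.
\end{itemize}
Your suggested remedy, blowing up the branch locus and collapsing product regions, is not an argument. Producing a genuine flow with hyperbolic dynamics from branched-surface data needs a real existence theorem, and verifying its hypotheses is where the real work lies.

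Step~3 does not fill this gap, and it is partly circular.
\begin{itemize}
\item \emph{Coding by rectangles.} Coding orbits of $\varphi$ by nested maximal rectangles presupposes that $\orb_\varphi$ is a bifoliated plane whose maximal rectangles relative to the lifted cores are exactly the lifted tetrahedra. That is the content of items (1)--(3) of \Cref{prop:veering_th}, which the paper derives \emph{from} this theorem together with results of Tsang and Landry--Minsky--Taylor. Your no-perfect-fits argument presupposes the same identification.
\item \emph{Expansivity.} For an abstract veering triangulation there is no plane in which ``maximal rectangles'' live. \Cref{lem_max} only says that nested maximal rectangles in a \emph{given} plane meet in a point. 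It yields no expansivity for a flow not yet known to have that plane as orbit space, and ``heights and widths shrink to zero'' requires a metric you never introduced.
\item \emph{Transitivity.} This rests on a false claim. The flow graph of a veering triangulation need not be strongly connected: it can have infinitesimal components, which are precisely the subject of the Agol--Tsang paper. When it is not strongly connected, no bi-infinite path visits every edge infinitely often. Even when it is, density of a coded orbit requires a continuous coding with shrinking cylinder sets, which is the same missing control as above.
\end{itemize}

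There are also two smaller slips. First, tetrahedra, not faces, correspond to maximal rectangles; a face corresponds to a rectangle with only three cusp points on its boundary. Second, ladderpoles are not individually stable or unstable. A meridian with $|\langle [s_T], \mc L_T\rangle| = n$ crosses $2n$ ladderpoles, hence $n$ upward and $n$ downward ladders, and that is what produces an $n$-prong as in \Cref{lem:slope}.
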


\begin{remark}[The veering correspondence] \label{rem_correspondence}
The correspondence between pseudo-Anosov flows and veering triangulations is initially due to Segerman and Schleimer in a series of papers \cite{schleimer2024loom, schleimer2019veering, SS3} and also establishes the existence of a pseudo-Anosov flow on $M$ associated to $V$. For us, however, it is more direct to apply \Cref{thm_AT5.1} in order to obtained the other properties needed for \Cref{prop:veering_th}.
\end{remark}

To explain the terminology ``ladderpole", and along the way specify our choice of slopes, we begin by describing a truncated model for $V$.  

\smallskip

The {\em truncated model} of $V$ is the 3-manifold with torus boundary components 
obtained by removing small neighborhoods of the cusps.  Alternatively, this can be obtained by gluing together truncated tetrahedra as in \Cref{fig:tet_ladder} (left), in place of the ideal tetrahedra.  By lifting, we obtain also a truncated model of $\wt V$ in which the cusps are now cylinders. 
This is because each point $p \in \kappa$ has infinite cyclic stabilizer by \Cref{prop:npfs}.
 The core curve of this cylinder projects to an essential curve on the corresponding torus cusp of $V$, giving us a well-defined slope $[s_T] \in H_1(T)$ for each torus cusp $T$ of $V$.   

Ladderpole curves are defined using the tiling of each cusp of $\wt V$ given by the edges of the truncated tetrahedra.  
This structure was first explained by Futer--Gu\'eritaud \cite[Section 2]{futer2013explicit}; our description is closer to that of  \cite[Section 5]{LMT20} and \cite[Section 2]{landry2019stable}.
Fix $p \in \kappa$ and consider the associated cusp $c_p$.  
Call the intersection of $c_p$ with a (tip of a) tetrahedron a {\em flat triangle} in $c_p$.
The edges of each flat triangle correspond to the three faces of the tetrahedron that meet $c_p$. Since the faces of $\wt V$ are each cooriented, this gives a coorientation on the sides of each flat triangle in $c_p$:  
the \emph{upward} flat triangles are exactly the ones meeting the top edge of the associated tetrahedron and the \emph{downward} flat triangles are the ones meeting its bottom edge. (Here, the flat triangle contains the endpoint of either the top or the bottom edge of its tetrahedron, and this marks the distinction between upward and downward.) Alternatively, upward flat triangles are exactly the ones for which two of its edges are cooriented out of the triangle.
See \Cref{fig:tet_ladder}.

\begin{figure}[htbp]
\begin{center}
\includegraphics[width = 11cm]{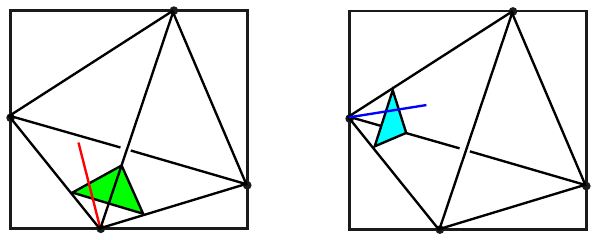}
\caption{Left: upward flat triangles (green) come from prongs in $\mc F^v$. Right: downward flat triangles (blue) come from prongs in $\mc F^h$.}
\label{fig:ladders}
\end{center}
\end{figure}

Translating this to the construction of $\wt V$ using maximal rectangles, the flat triangles in $c_p$ are precisely the ones associated to maximal rectangles of ${\mc P}$ that contain $p$ in their boundary. These are naturally divided into groups (called \emph{ladders}) corresponding to the prongs (i.e. half-leaves) of $\FF^{v/h}$ at $p$. For each prong $r$, define the ladder $L_r$ to be the union of flat triangles in $c_p$ of tetrahedra associated to maximal rectangles whose interiors meets $r$. By construction, the upward flat triangles of a tetrahedron $t_R$ are associated to the cusps along the horizontal sides of its maximal rectangle $R$ and the downward flat triangles of $t_R$ are associated to the cusps along the vertical sides of $R$. This translates into the fact, 
illustrated in \Cref{fig:ladders}, that
$r$ is in $\FF^v$ if and only if each flat triangle in $L_r$ is upward, as with the green triangles are in \Cref{fig:tet_ladder}. Otherwise, $r$ is  in $\FF^h$ and the flat triangles in $L_r$ are downward, as with blue triangles are in \Cref{fig:tet_ladder}. In this way, $c_p$ is the union of \emph{upward ladders} and \emph{downward ladders}, each an infinite strip in $c_p$ that meet along \emph{ladderpole lines}, the vertical lines in \Cref{fig:tet_ladder}.

Now, let $T$ be a torus cusp of $V$, where the upward and downward ladders project to a finite collection of annuli whose union is $T$.  The ladderpoles in any component of the preimage of $T$ in $\wt V$ project to a collection of curves $l_T$ in $T$. We call $\mc L_T = 1/2 \cdot [l_T] \in H_1(T)$ the associated \emph{ladderpole class}. (This is the same as the ladderpole class in \cite{AgolTsang} since they consider the core curves of the downward ladders in $T$.)

\begin{lemma} \label{lem:slope}
If $p \in {\mc P}$ is a $k$-prong in $\mc F^h$ (or $\mc F^v$), then $\langle s_T, \mc L_T \rangle = k$, where $T$ is the image in $V$ of the cusp $c_p$ in $\wt V$.
\end{lemma}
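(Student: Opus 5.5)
We work in the cusp cylinder $c_p$ of the truncated model of $\wt V$, which carries an action of $\mathrm{Stab}_G(p)$. By \Cref{prop:npfs} this stabilizer is infinite cyclic; let $g$ be the generator. Then $T = c_p/\langle g\rangle$, and the slope $s_T$ is represented by the core curve of the cylinder, i.e.\ by a closed curve in $T$ that lifts to a path in $c_p$ from a point $x$ to $g(x)$. The ladderpole lines in $c_p$ are properly embedded lines running along the cylinder, transverse to its core. So $\langle s_T, \mc L_T\rangle$ is one half of the signed number of ladderpole lines of $c_p$ that a path from $x$ to $g x$ crosses. Equivalently, it is half the number of ladderpole lines in $c_p$ that project to $T$ once we go around the core once.

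First I would identify the ladderpole lines combinatorially. As described before the lemma, the ladders of $c_p$ correspond bijectively to the prongs (half-leaves) of $\FF^v$ and $\FF^h$ at $p$. Upward ladders come from vertical prongs and downward ladders from horizontal prongs. Consecutive ladders meet along a single ladderpole line. The circular order of ladders around the core of $c_p$ agrees with the circular order of the prongs around $p$ in $\mc P$.

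To justify this, note that going once around $p$ in $\mc P$ crosses the half-leaves alternately vertical and horizontal. A maximal rectangle with $p$ in its boundary has its interior meeting exactly one prong at $p$. Its flat triangle lies in that prong's ladder, and adjacency of flat triangles along a face corresponds to sliding the rectangle across $p$ while keeping $p$ on its boundary. This is the local analysis of maximal rectangles near a point of $\kappa$, as in \cite{LMT21}.

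Next, if $p$ is a $k$-prong of $\FF^h$ (the $\FF^v$ case is symmetric), there are $k$ horizontal and $k$ vertical half-leaves at $p$, which alternate. There are therefore $2k$ ladders, and $2k$ ladderpole lines between them, per circuit around $p$. Since $\langle s_T, \mc L_T\rangle$ is half the intersection number with $[l_T]$, this gives $2k/2 = k$.

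The main obstacle is the dictionary between one circuit of the core of $c_p$ (that is, the action of $g$) and one full circuit of the prongs around $p$. We must rule out that $g$ acts on the prongs by a nontrivial rotation. Otherwise the core curve would correspond to only a fraction of a turn, or to several turns, of the link of $p$. Here I would use that $c_p$ is the link of the cusp in $\wt V$, a cylinder whose ends are not identified. The triangles of $c_p$ are indexed by maximal rectangles with $p$ on the boundary, and each such rectangle is a genuine rectangle in $\mc P$, not in a cover of $\mc P\ssm p$. Hence the cyclic order of ladders around the core of $c_p$ is exactly the circular order of prongs, traversed once. The deck transformation $g$ then acts on this circular set, and the core class $s_T$ is the loop going once around, independent of how $g$ permutes prongs. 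I would double-check that $s_T$ was defined as the meridional core of the cylinder (the loop around $p$), rather than the $g$-translation direction. If instead $s_T$ is the $g$-translation direction, the count would be the number of ladderpoles crossed modulo the rotation of $g$. In that case I would use hyperbolicity of $g$ at $p$, together with the passage to $\mc P_\ast$, where $G^+_\ast$ fixes the prongs, to pin down the correct identification.
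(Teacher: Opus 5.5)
Your counting is the paper's proof. The $k$ vertical and $k$ horizontal prongs at $p$ give $k$ upward and $k$ downward ladders alternating around $c_p$, hence $2k$ ladderpole lines, and $\mc L_T = \tfrac12[l_T]$ turns $2k$ into $k$.

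The problem is that you waver on what $s_T$ is, and your first paragraph gets it wrong. In the paper, $s_T$ is the image of the core circle of the cylinder $c_p \cong S^1 \times \mathbb{R}$. That is a closed loop in $c_p$ going once around $p$: the meridian. This is why filling along the $s_T$ gives $\pi_1(V)/\langle\langle s_T\rangle\rangle = G$. It is not represented by a path from $x$ to $g(x)$, which describes the direction in which $\mathrm{Stab}_G(p)$ translates $c_p$, so your ``Equivalently'' is false. The ladderpole lines run in the $\mathbb{R}$-direction of $c_p$. If $g$ fixes the prongs at $p$, a path from $x$ to $g(x)$ can stay inside one ladder and cross no ladderpole at all. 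Under that reading the lemma would fail, and no appeal to hyperbolicity or to $\mc P_\ast$ in your final hedge could repair it.

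With the correct definition, your ``main obstacle'' disappears: you neither can nor need to show that $g$ acts trivially on the prongs. Take an embedded representative of $s_T$ in $T$. It lifts homeomorphically to a core circle of $c_p$, so its algebraic intersection with $l_T$ equals the total algebraic intersection of that circle with the full preimage of $l_T$. That preimage is the set of all $2k$ ladderpole lines of $c_p$. Each line runs from one end of the cylinder to the other, so the circle meets it algebraically once, with the same sign for every line. Hence $\langle s_T, [l_T]\rangle = 2k$ however $g$ permutes the ladders. Commit to the meridian reading and drop the alternative branch; what remains is the paper's two-line argument.
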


\begin{proof} 
Most of the proof has already been achieved by the discussion above:  
the number of upward (or downward) ladders is exactly the degree of the prong $p$ in the foliation $\mc F^h$ (or $\mc F^v$).  Because ladderpoles alternate upwards and downwards,  this is number is exactly half the number of ladderpoles in $c_p$.  
\end{proof}

\begin{figure}[htbp]
\begin{center}
\includegraphics[width = .9\textwidth]{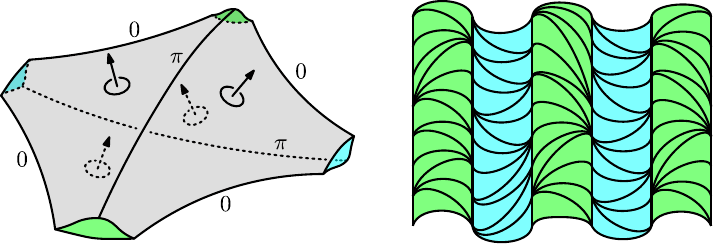}
\caption{Left: an ideal tetrahedra. Right: the pattern of upward and downward ladders along a cusp. Here the coorientation is upward. Figures from \cite{LMT21}.}
\label{fig:tet_ladder}
\end{center}
\end{figure}

We can now prove the main proposition
\begin{proof}[Proof of \Cref{prop:veering_th}]

Since $\mc P_\ast$ has no 1-pronged singularities, for each cusp $T$ we have 
$\langle [s_T], \mc L_T \rangle \ge 2$ by \Cref{lem:slope}.  Thus, we can apply Agol--Tsang (\Cref{thm_AT5.1}) and conclude that $M$ 
admits a transitive pseudo-Anosov flow $\varphi$ so that there are closed orbits $\wh \kappa_\phi = \{ \gamma_T \}$ isotopic to the cores of the filling solid tori with the property that each $\gamma_T$ is a $\langle [s_T], [l_T] \rangle$-prong, and $\varphi$ has no perfect fits relative to $\{\gamma_T\}$.

Now item $(3)$ is precisely \cite[Theorem 2.4.1]{tsang2023veering}.  

The claim in item $(1)$ is proved in \cite[Theorem 5.1]{LMT21} for any veering triangulation that is obtained by applying the {Agol--Gu\'eritaud construction} to the orbit space of a pseudo-Anosov flow. Since this is indeed the case by the (already established) item $(3)$, item $(1)$ follows.

Hence, it remains to prove item $(2)$. 
The inclusion $V \subset M$ from item (1) allows us to canonically identify $G$ with $\pi_1(M)$ as follows:
First note that the surjective homomorphism $\pi_1(V) \to \pi_1(M)$ induces an isomorphism $\pi_1(V)/ \langle\langle s_T  \rangle\rangle = \pi_1(M)$. Hence, if $\wt M$ denotes the universal cover, we have a $\pi_1(M)$--invariant embedding $\wt V \to \wt M$ lifting the fixed inclusion $V \subset M$. It now follows from basic covering space theory that the covering group of $\wt M \to M$, which is identified with $\pi_1(M)$, restricts to the covering group of $\wt V \to V$, which is $G$. 

This determines the isomorphisms $G \to \pi_1(M)$ that is equivariant with respect to the embedding $\wt V \to \wt M$, as required.
\end{proof}

Hence, we have constructed a closed manifold $M$ with $\pi_1(M) = G$ together with a transitive pseudo-Anosov flow $\varphi$. It remains to prove that the induced action $G \curvearrowright \orb_\varphi$ is conjugate to the original action $G \curvearrowright \mc P$, and this is the content of the next section.

\begin{remark}
One path to proving that the action $G \curvearrowright \orb_\varphi$ is conjugate to $G \curvearrowright \mc P$ is to use the theory of Anosov-like actions and the main result of \cite{BFM}. This would require first showing that  $G \curvearrowright \mc P$ is Anosov-like
 and then verifying that the two actions have the same set of elements fixing points in their respective planes. However, our approach here is to directly build the conjugacy and does not require much more effort. It also has the advantage of providing a self-contained dictionary between the flow and veering triangulation, which we think will be useful in future work. 
\end{remark}

\section{Recovering the action $G \curvearrowright \mc P$}
\label{sec:same_action}
The following proposition completes the proof of the main theorem. Informally, it states that a veering triangulation (when it exists) recovers the bifoliated plane, up to equivariant isomorphism. For the statement, we say that two actions are conjugate if there is a structure preserving, equivariant homeomorphism between their underlying spaces. 

\begin{proposition} 
\label{prop:recovering}
Suppose that $G$ acts on bifoliated planes $\mc P_1$ and $\mc P_2$. Let $\kappa_i \subset \mc P_i$ be a $G$--invariant, closed, discrete subset of $\mc P_i$ such that $\mc P_i$ has no perfect fits (or infinite strips) relative to $\kappa_i$, and let $\wt V_i$ be the associated veering triangulation.  If  the induced actions $G \curvearrowright \wt V_1$ and $G \curvearrowright \wt V_2$ are conjugate, then the actions of $G$ on $\mc P_1$ and $\mc P_2$ are conjugate.
\end{proposition}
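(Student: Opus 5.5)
Our plan is to reconstruct each plane $\mc P_i$ intrinsically from the combinatorics of $\wt V_i$ together with its $G$--action, so that a conjugacy $\wt V_1 \to \wt V_2$ induces a conjugacy $\mc P_1 \to \mc P_2$. The dictionary is the Agol--Gu\'eritaud one. Cusps of $\wt V_i$ correspond to points of $\kappa_i$, and tetrahedra correspond to maximal rectangles. Each flat triangle at a cusp $c_p$ corresponds to a maximal rectangle with $p$ on its boundary, and the ladders at $c_p$ correspond to prongs of $\FF^h$ and $\FF^v$ at $p$. The conjugacy $\Phi\colon \wt V_1\to\wt V_2$ preserves the simplicial structure. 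Once we check that it may be taken to preserve the coorientations of faces, it also preserves top and bottom edges. We first set up the bijections
\[
\kappa_1 \leftrightarrow \kappa_2,\qquad \{\text{maximal rectangles of } \mc P_1\}\leftrightarrow\{\text{maximal rectangles of } \mc P_2\},
\]
and check that they are $G$--equivariant and respect incidence. Incidence here means which points of $\kappa$ lie on which sides (horizontal versus vertical, read off from top versus bottom edges).

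The key step is to characterize points of $\mc P$ (not just of $\kappa$) combinatorially, in the spirit of the correspondence announced in the introduction between points of the plane and transverse paths through the triangulation. For $x\in\mc P_i\ssm \kappa_i$, we consider the set $\mc T(x)$ of maximal rectangles containing $x$. Since there are no perfect fits or infinite strips relative to $\kappa_i$, any rectangle avoiding $\kappa_i$ in its interior extends to a maximal one. Using this, we expect $\mc T(x)$ to be nonempty, and linearly ordered by the ``above'' relation of \Cref{def:above}. Moreover, consecutive members should share a face of the triangulation, so that $\mc T(x)$ is a bi-infinite upward chain of tetrahedra, which we call a \emph{descending/ascending path} through $\wt V_i$. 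We then show that $x$ is the unique point in the intersection of the rectangles in $\mc T(x)$. The horizontal widths shrink going up and the vertical heights shrink going down, since otherwise the limiting region would be a rectangle or strip whose interior misses $\kappa_i$, contradicting the no-perfect-fits or no-infinite-strips hypothesis.

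Conversely, we characterize which face-connected upward chains of tetrahedra arise as $\mc T(x)$. We expect these to be exactly the maximal chains that are ``not eventually turning around a single cusp'' in either direction. Chains that spiral into a cusp correspond to points of $\kappa_i$ itself, or to leaves through them, and these are handled separately using the cusp bijection. This gives a bijection $f\colon \mc P_1\to\mc P_2$ defined via $\Phi$, and it is automatically $G$--equivariant. We then verify three further properties. First, $f$ preserves each foliation: two points lie on a common $\FF^v$ leaf exactly when their chains agree from some point on going up, and similarly for $\FF^h$ going down; or, more robustly, when the chains can be joined through rectangles sharing vertical saturation. Second, $f$ is a homeomorphism, because the interiors of maximal rectangles give a basis of product-type neighborhoods on each side (near $\kappa$ we use neighborhoods built from the ladders at the cusp). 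Third, $f$ maps rectangles to rectangles.

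We expect the main obstacle to be the characterization of the chains $\mc T(x)$ and the proof that they determine $x$ uniquely and continuously. In particular we need the handling of points on singular leaves and on leaves through $\kappa$, where $\mc T(x)$ degenerates or branches. Our first attempt would be to treat points of $\FF^{h}(\kappa)\cup\FF^v(\kappa)$ as limits of generic points, and to define $f$ there by continuity using the ladder structure at cusps. Density of the generic points, together with the fact that the topology is generated by rectangle interiors, then makes the extension well defined.
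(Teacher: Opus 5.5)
\textbf{There is a genuine gap.} Your key structural claim fails. The set $\mc T(x)$ of maximal rectangles containing $x$ is \emph{not} linearly ordered by ``above'', even when $x$ lies on no leaf through $\kappa$.

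Take a maximal rectangle $R$, written in a product chart as $[a,b]\times[c,d]$. Let $p_B=(x_B,c)$ and $p_T=(x_T,d)$ be the points of $\kappa$ on its horizontal sides, and say $x_B<x_T$. The two tetrahedra glued to the top faces of $t_R$ (the faces containing the top edge $p_Bp_T$) correspond to $R_a=[a,x_T]\times[c,d']$ and $R_b=[x_B,b]\times[c',d]$, with $d'>d$ and $c'<c$. Both are above $R$. However, their $\FF^v$--saturations $[a,x_T]$ and $[x_B,b]$ cross, so neither is above the other. Every point of the open set $(x_B,x_T)\times(c,d)$ lies in both.

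So $\mc T(x)$ branches at generic points; this is the ``possibly both'' in the paper's surjectivity argument. A single point therefore corresponds to many bi-infinite face-connected upward paths, not one. Consequently the bijection between points and ``chains not eventually turning around a cusp'' does not exist as stated. Your leaf criterion (chains agreeing from some point on going up) is also wrong for the same reason: two paths through one point, or through two points of one vertical leaf, may choose different upper neighbors infinitely often.

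The missing idea is a purely combinatorial equivalence relation on bi-infinite directed paths (dual lines) in the dual graph $\wt\Gamma$. It must detect when two lines have the same image under $l=(t_{R_i})\mapsto\bigcap_i R_i$, which is well defined by \Cref{lem_max} (essentially your shrinking argument). The paper uses domination: $l_2$ forward dominates $l_1$ if every tetrahedron of $l_1$ has a directed path to some tetrahedron of $l_2$, and backward domination is defined dually. One then shows the following description of fibers:
\begin{itemize}
\item over a point off the leaves of $\kappa$, the fiber is exactly one forward-and-backward equivalence class;
\item over a point of a half-leaf at $\kappa$, it is the class of lines dominating the corresponding stable/unstable cusp line;
\item over a point of $\kappa$, it is the set of cusp lines of its cusp.
\end{itemize}
These notions are defined via directed paths and cusp ladders, so the conjugacy preserves them. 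This yields the equivariant bijection directly at every point; it maps maximal rectangles to maximal rectangles and hence is a homeomorphism.

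This also makes your extension ``by continuity'' to leaves through $\kappa$ unnecessary, and as written that step is unjustified. In particular, at a point of $\kappa$ with many prongs, the point is approached from several quadrants, and you need the cusp-line description to get a well-defined bijection.
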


To prove this, we show how to recover a bifoliated plane from a veering triangulation.  Several ideas appear in \cite[Section 6]{LMT21}, which is written in the (a priori) more restrictive setting of orbit spaces. 

\begin{remark} \label{rem_Segerman_schleimer}
The above result can be rephrased as a construction of $\mc P$ from $\wt V$. That veering triangulations determine bifoliated planes (in a slightly more restricted setting) also follows from the Segerman--Schleimer loom space construction. Our argument here using the dual graph ties more directly to the corresponding flow and is in the more general setting of our main theorem. 
\end{remark}

We start by working with a single plane; so fix $\mc P$ with an action of a group $G$, a discrete $G$-invariant set $\kappa \subset \mc P$ killing all perfect fits, 
and fix a veering action 
$G \curvearrowright \wt V$ obtained from the Agol--Gu\'eritaud  construction applied to an action $G \curvearrowright \mc P$ relative to $\kappa$, exactly as in \Cref{sec:veering}.  
The first lemma shows how to recover points of ${\mc P}$ from sequences of maximal rectangles; it is essentially the content of \cite[Fact 4.4]{LMT21}.  

\begin{lemma}[Maximal rectangle intersection] 
\label{lem_max}
If $(R_i)_{i \in \ZZ}$ is any sequence of maximal rectangles such that $R_{i+1}$ is above $R_i$, then $\bigcap_{i\ge 0} R_i$ is a single vertical segment of $R_0$ and $\bigcap_{i\le  0} R_i$ is a single horizontal segment of $R_0$; thus 
$\bigcap_{i \in \ZZ}(R_i)$ is a single point.
\end{lemma}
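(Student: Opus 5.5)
By symmetry (exchanging the roles of $\FF^h$ and $\FF^v$, and reversing the index), it suffices to prove that $\bigcap_{i \ge 0} R_i$ is a single vertical segment of $R_0$. The final claim then follows: a vertical segment of $R_0$ meets a horizontal segment of $R_0$ in exactly one point.

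\textbf{Step 1: the nested intersection is a vertical subrectangle.} Since $R_{i+1}$ is above $R_i$, each vertical segment of $R_{i+1}$ properly contains a vertical segment of $R_i$. Hence $R_0 \cap R_i$ is a vertical subrectangle of $R_0$, i.e.\ it spans $R_0$ from top to bottom. Moreover, the $\FF^v$-saturations $\FF^v(R_i)$ are strictly nested and decreasing. So $K = \bigcap_{i\ge 0} (R_0 \cap R_i)$ is a nested intersection of compact vertical subrectangles of $R_0$. It is therefore a (possibly degenerate) vertical subrectangle $[a,b] \times [0,1]$ in the coordinates of $R_0$. The task is to show $a = b$.

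\textbf{Step 2: suppose $a<b$ and build a forbidden region.} Assume for contradiction that $a < b$. Then $K$ is a nondegenerate rectangle contained in every $R_i$, $i \ge 0$, and its vertical extent equals that of $R_0$. Since each $R_{i+1}$ is above $R_i$, the vertical sides of the $R_i$ properly increase in length, and their horizontal sides shrink down onto the horizontal extent of $K$. Consider $U = \bigcup_{i \ge 0} R_i$. This union is a rectangle-like region whose horizontal width is $[a,b]$ in the limit and whose vertical extent is increasing.

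Two cases arise.
\begin{itemize}
\item If the vertical extents grow unboundedly on at least one side, then $U$ contains a properly embedded product region of the form $[0,\infty) \times [0,1]$ (after exchanging coordinates), i.e.\ an infinite strip. The strip would be disjoint from $\kappa$, because $\kappa$ cannot meet the interior of any $R_i$.
\item If the vertical extents converge, then the limit of the upper (or lower) horizontal sides gives a region bounded by leaves. The $R_i$ are maximal, so each has a point of $\kappa$ in the interior of each side. These points of $\kappa$ on the vertical sides of $R_i$ lie in $\FF^v(R_i) \setminus \FF^v(K)$ and in a compact region. By discreteness of $\kappa$ they cannot accumulate, yet the vertical sides of the $R_i$ converge to the vertical sides of $K$. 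This would force infinitely many distinct points of $\kappa$ into a compact set, or force a point of $\kappa$ onto a side of $K$ and hence into the interior of some $R_i$.
\end{itemize}
Either outcome, after tracking the limiting leaves, produces a perfect fit rectangle or an infinite strip disjoint from $\kappa$. This contradicts the hypothesis that $\mc P$ has no perfect fits relative to $\kappa$, together with the standing assumption of no infinite product regions.

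\textbf{Main obstacle.} The delicate part is Step 2: making the case analysis precise. Specifically, one must identify exactly which limiting configuration of leaves (a perfect fit or an infinite strip) arises and verify that it avoids $\kappa$. Singular points on boundary leaves need extra care. I would handle this by working in the closure $\overline{U}$ and examining the limiting leaves through the corners of the $R_i$, mirroring the argument of \cite[Fact 4.4]{LMT21}.
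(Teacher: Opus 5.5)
\textbf{There is a gap in Step 2.} Your strategy matches the paper's in outline: assume $a<b$, use the hypothesis that there are no perfect fits or infinite strips disjoint from $\kappa$ to confine the $R_i$, and contradict discreteness of $\kappa$. But Step 2 is the only place that hypothesis enters, and it is not carried out; the case split as written does not work.

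\begin{itemize}
\item The dichotomy ``the vertical extents grow unboundedly'' versus ``converge'' is not well posed over the strip $\FF^v(K)$. Along different vertical leaves $\ell_c$, $c\in[a,b]$, the extents of $R_i\cap\FF^v(K)$ can stay bounded for some $c$ and escape for others. That mixed behaviour is exactly the perfect-fit situation. There $\bigcup_i (R_i\cap \FF^v(K))$ is not a properly embedded product region, so the conclusion of your Case 1 fails.
\item Your closing sentence (``after tracking the limiting leaves, produces a perfect fit rectangle or an infinite strip'') is the heart of the argument left as a promise.
\item In Case 2 the ``compact region'' containing the $\kappa$-points is asserted, not proved. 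This is again precisely where the no-perfect-fits hypothesis is needed.
\item The fallback ``force a point of $\kappa$ onto a side of $K$ and hence into the interior of some $R_i$'' fails if, say, $a_i=a$ for all large $i$. The left $\kappa$-point then stays on the common left side of the $R_i$ and lies in no interior. The correct move is that strictness of ``above'' makes the narrowing occur infinitely often on at least one side. That gives infinitely many distinct $\kappa$-points, on distinct vertical leaves, on that side.
\item The standing assumption rules out only \emph{horizontal} infinite product regions, and your strip is vertical. You must invoke the relative hypothesis (no infinite strips disjoint from $\kappa$) instead.
\end{itemize}

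\textbf{The missing idea.} The paper avoids the case analysis by passing to the maximal vertical extension $Q$ of $K$ among rectangles whose interiors miss $\kappa$.
\begin{itemize}
\item Such extensions are nested, and the hypothesis that $\mc P$ has no perfect fits or infinite strips disjoint from $\kappa$ is exactly what makes $Q$ a compact rectangle with points of $\kappa$ on both horizontal sides. No classification of limiting leaves is required.
\item Each $R_i\cap\FF^v(K)$ is such an extension, so it lies in $Q$. Hence $Q$ is above every $R_i$, and each $R_i$ lies in $Q$ together with thin slabs shrinking onto the vertical sides of $Q$.
\item The paper then uses the $\kappa$-points on the \emph{horizontal} sides of the $R_i$. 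Since the heights grow strictly infinitely often on one side, these points lie on infinitely many distinct horizontal leaves, so they accumulate, contradicting discreteness.
\end{itemize}
Your vertical-side $\kappa$-points would work equally well once $Q$ supplies this compactness.
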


\begin{proof} 
Suppose that $\bigcap_{i\ge 0} R_i$ is not a single vertical segment in $R_0$. Then $\bigcap_{i\ge 0} R_i$ is a rectangle that can be extended vertically to a rectangle $Q$ whose horizontal boundary segments meets points of $\kappa$. This is because $\mc P$ has no perfect fits or infinite strips disjoint from $\kappa$. But then $Q$ lies above each rectangle $R_i$. This forces the $\kappa$-points along the horizontal boundary segments of the $R_i$ to accumulate in $\mc P$, contradicting the assumption that $\kappa$ is closed and discrete. The same argument gives that $\bigcap_{i \le 0} R_i$ is a single horizontal segment of $R_0$, which implies the lemma.
\end{proof} 

The next lemma says that sequences of rectangles that lie above/below each other can be coded (non-uniquely) by the  directed dual graph of $\wt V$.  
Recall, the {\em directed dual graph} is the graph $\wt \Gamma$ with a vertex for each tetrahedron $t_R$, where $R$ is a maximal rectangle, and a directed edge respecting the coorientation when two tetrahedra are glued along a face.  Thus, $\Gamma := \wt \Gamma /G \subset V = \wt V /G$ is the usual dual graph considered in \cite{LMT21}.

\begin{lemma} \label{lem_paths}
Let $\wt \Gamma$ be the directed dual graph of $\wt V$.  
A vertex $t_R$ has a directed path in $\wt \Gamma$ to $t_{R'}$ if and only if $R'$ lies above $R$ in $\mc P$.
\end{lemma}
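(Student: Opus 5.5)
The forward direction is the easy one. I will first check that a single directed edge of $\wt\Gamma$ goes from $t_R$ to $t_{R'}$ exactly when $R'$ is above $R$ and the two rectangles share three points of $\kappa$. Then transitivity of ``above'' finishes it.

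For the single edge, suppose $t_R$ and $t_{R'}$ are glued along a face, cooriented from $t_R$ into $t_{R'}$. That face is a top face of $t_R$, so it contains the top edge of $R$, joining the $\kappa$-points on its two horizontal sides. It is also a bottom face of $t_{R'}$, so it contains the bottom edge of $R'$, joining the $\kappa$-points on its two vertical sides.

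So the three shared cusps are: both horizontal-side points of $R$, one vertical-side point of $R$, and they also serve as both vertical-side points of $R'$ plus one horizontal-side point of $R'$. Consider the vertical extent of $R'$. Its horizontal sides carry $\kappa$-points, one of which is a horizontal-side point of $R$. Its interior contains no $\kappa$-points, yet $R'$ contains both horizontal-side points of $R$ on its vertical sides. Maximality of $R$ then forces $R'$ to be taller than $R$ and narrower: $R' \cap R$ is a vertical subrectangle of $R$ that is not all of $R$, and similarly for the horizontal direction. This is exactly the strict containments of \Cref{def:above}. This is the standard picture of \cite[Section 4]{LMT21}, and I will just draw it carefully.

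For the converse, suppose $R'$ is above $R$. I will build a directed path by a \emph{flip} (staircase) argument. Consider the vertical sides of $R$. Among the maximal rectangles above $R$ and glued to it along a top face, there are two upward moves, one through each bottom-edge vertex choice: shrink $R$ horizontally until one vertical side hits a $\kappa$-point, while extending vertically.

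Here is the key claim. If $R'$ is above $R$ and $R' \neq R$, then at least one of the two upward neighbors $R_1$ of $R$ satisfies: $R'$ is above or equal to $R_1$. To see this, note that $R'$ is narrower than $R$, so its $\FF^v$-saturation misses at least one of the vertical-side $\kappa$-points of $R$. Say it misses the left one. Then the neighbor obtained by cutting off that side is still wider than $R'$. Since $R'$ is maximal and contains no $\kappa$ in its interior, the neighbor is also no taller than $R'$; otherwise a horizontal-side $\kappa$-point of $R'$ would lie inside $R_1$.

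Iterating, I get a sequence $R = R_0, R_1, R_2, \dots$ with each $R_{i+1}$ above $R_i$ and $R'$ above or equal to every $R_i$. It remains to show this terminates.

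The main obstacle is termination. If it never stopped, all $R_i$ would have $\FF^v$-saturations containing that of $R'$ and $\FF^h$-saturations contained in that of $R'$. But by \Cref{lem_max}, $\bigcap_{i \ge 0} R_i$ is a single vertical segment, so the widths shrink to a single leaf. This contradicts the fact that every $R_i$ contains the full width of $R'$, which is a nondegenerate rectangle.

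Termination also needs finiteness at each step. This holds because $\kappa$ is closed and discrete: only finitely many $\kappa$-points lie in the compact region $R \cup R'$, and each step produces a new horizontal-side $\kappa$-point inside the $\FF^h$-band of $R'$, bounded by this region. Either argument shows the process reaches $R'$ after finitely many steps, which gives the directed path in $\wt\Gamma$.
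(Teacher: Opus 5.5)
Your architecture is the paper's: a single face gluing goes from a rectangle to one above it, and transitivity gives one direction. For the converse you repeatedly step to whichever of the two upward neighbours of $t_R$ is still below or equal to $R'$, and termination comes from \Cref{lem_max}. Your \Cref{lem_max} termination argument is correct and is exactly the paper's. The extra ``finiteness'' argument is unnecessary, and it is not right as stated: the new $\kappa$-points need not lie in $R\cup R'$. In the forward direction, the sentence saying $R'$ has both horizontal-side points of $R$ on its vertical sides cannot happen; it also contradicts your preceding sentence. The correct picture is this: one of $n,s$ (the $\kappa$-points on the top and bottom sides of $R$) lies on a vertical side of $R'$, together with $e$ or $w$, and the other of $n,s$ lies on a horizontal side. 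The paper just asserts the key claim that some upward neighbour is below or equal to $R'$. The justification you give for that claim contains a step that fails.

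The failing inference is: ``$R'$ misses the left vertical side of $R$, so the neighbour obtained by cutting off that side is still wider than $R'$.'' Cutting off the left side removes the whole strip up to the vertical leaf of whichever of $n,s$ comes first, not just the side. Here is a configuration where it goes wrong. Take coordinates $R=[0,1]^2$, $n=(x_n,1)$, $s=(x_s,0)$ with $x_n<x_s$. The cut-left neighbour has width $[x_n,1]$ and protrudes upward; the cut-right neighbour $R_2$ has width $[0,x_s]$ and protrudes downward. Suppose the new bottom $\kappa$-point of $R_2$ lies left of $\FF^v(n)$. Then the cut-left upward neighbour $R'$ of $R_2$ has width $[a,x_s]$ with $0<a<x_n$, and its top side lies on the top leaf of $R$. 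This $R'$ is above $R$ and misses the left side of $R$, yet it is not above the cut-left neighbour of $R$.

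To repair the claim, use the $\kappa$-points on the horizontal sides of $R$ rather than the vertical sides. Since $R'$ is above $R$, it protrudes strictly beyond $R$ through its top or its bottom. Let $z\in\{n,s\}$ be the $\kappa$-point on that side. $R'$ contains the full height of $R$ over its width, but $z\notin\mathrm{int}(R')$, so the width of $R'$ lies weakly on one side of $\FF^v(z)$. If it lies to the right, take the cut-left neighbour, whose width is $[\min(x_n,x_s),1]\supseteq[x_z,1]$. Otherwise take the cut-right neighbour, whose width is $[0,\max(x_n,x_s)]\supseteq[0,x_z]$. Your height comparison then finishes the claim: otherwise a horizontal-side $\kappa$-point of $R'$ would lie in the interior of that neighbour, including when that side of $R'$ sits on a horizontal leaf of $R$ where the neighbour protrudes.
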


\begin{proof}
First, each edge of $\wt \Gamma$ corresponds to a pair of maximal rectangles that share three points of $\kappa$,  and by definition the tetrahedron on top (according to the coorientation on their shared face) corresponds to the rectangle that lies above. Since ``above" is a transitive relation, whenever there is a direct path from $t_R$ to $t_{R'}$ in $\wt \Gamma$, the rectangle $R'$ lies above $R$.

Conversely, suppose that $R'$ lies above $R$. If we let $R_a, R_b$ be the two maximal rectangles corresponding to the two tetrahedra glued to the top faces of $t_R$, then $R'$ is either equal to or lies above one of $R_a, R_b$. In the former case we are done and in the latter we replace $R$ with the $R_a$ or $R_b$ below $R'$ and continue. If both $R_a$ and $R_b$ are below $R'$,  we just choose one.  This process must terminate by \Cref{lem_max} and the proof is complete.
\end{proof}

As an immediate consequence of \Cref{lem_max} and \Cref{lem_paths} we have: 
\begin{corollary}  \label{cor_defined}
Let $\mc L$ denote the set of bi-infinite directed paths  in $\wt \Gamma$.  There is a well defined map
$\Phi \colon \mc L \to \mc P$ given by
\[
l = (t_{R_i})_{i \in \ZZ} \mapsto \bigcap_i R_i
\]
\end{corollary}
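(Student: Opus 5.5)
The plan is to check two things: that for every bi-infinite directed path $l = (t_{R_i})_{i\in\ZZ}$ in $\wt\Gamma$ the consecutive rectangles satisfy the hypothesis of \Cref{lem_max}, and that the resulting point is determined by $l$ alone. First, each directed edge $t_{R_i}\to t_{R_{i+1}}$ of $\wt\Gamma$ comes from two tetrahedra glued along a face. The coorientation says $t_{R_{i+1}}$ sits on top, so by the first paragraph of the proof of \Cref{lem_paths} (the edge case), $R_{i+1}$ lies above $R_i$ in $\mc P$. Hence $(R_i)_{i\in\ZZ}$ is a sequence of maximal rectangles with $R_{i+1}$ above $R_i$ for all $i$.

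Next, I apply \Cref{lem_max}. The forward intersection $\bigcap_{i\ge 0}R_i$ is a single vertical segment of $R_0$, and the backward intersection $\bigcap_{i\le 0}R_i$ is a single horizontal segment of $R_0$. In a rectangle, a vertical segment and a horizontal segment meet in exactly one point. So $\bigcap_{i\in\ZZ}R_i$ is a single point of $R_0\subset\mc P$, and $\Phi(l)$ is defined as that point.

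It remains to see that $\Phi$ is well defined, and the only subtlety is indexing. The path $l$ is a sequence, so the intersection is taken over its full vertex set and does not depend on where we start. If one regards paths up to shift of index, the intersection over all $i$ is shift-invariant, so this causes no problem. Also, each vertex $t_R$ determines its rectangle $R$ uniquely, because tetrahedra are indexed by maximal rectangles. The map therefore depends only on $l$.

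I do not expect any real obstacle, since the content is already in \Cref{lem_max}. The one point that needs care is that the ``above'' relation between consecutive rectangles is strict in the sense of \Cref{def:above}, which is what \Cref{lem_max} requires. This holds because rectangles glued across a face share three $\kappa$-points while differing in the fourth, so their saturations are properly nested.
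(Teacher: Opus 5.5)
Your proposal is correct and is essentially the paper's argument. The paper derives the corollary immediately from \Cref{lem_paths}, which shows that consecutive rectangles along a directed path lie above one another, and from \Cref{lem_max}, which makes the forward and backward intersections a vertical and a horizontal segment of $R_0$ meeting in a single point; your strictness remark is fine, though the cleanest justification is that weak nesting, maximality of both rectangles, and $R_i \neq R_{i+1}$ together force both containments to be strict.
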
 
We will refer to elements of $\mc L$ as {\em dual lines}. 

Thus, in order to reconstruct $\mc P$ from $\wt V$, what we need is a description (purely in terms of $\wt V$ and $\wt \Gamma$) of which dual lines have the same image under $\Phi$.   This requires some set-up, and is the content of the next subsection.  

\subsection{Coding points in $\mc P$ via $\wt V$}
One way two elements of $\mc L$ can have the same image under $\Phi$ is if each always has a rectangle above (and below) each rectangle of the other.  To formalize this, we say $l_2 \in \mc L$ \emph{forward dominates} $l_1 \in \mc L$ if for every $t_1 \in l_1$, 
there is a $t_2 \in l_2$ such that there is a directed path from $t_1$ to $t_2$ in $\wt \Gamma$. If, in addition, $l_1$ forward dominates $l_2$, we say that $l_1$ and $l_2$ are \emph{forward equivalent}. 
Similarly, $l_2$ \emph{backwards dominates} $l_1$ if for every $t_1 \in l_1$, there is a $t_2 \in l_2$ such that there is a directed path from $t_2$ to $t_1$ in $\wt \Gamma$. The dual lines $l_1$ and $l_2$ are \emph{backward equivalent} if they backward dominate each other.
We say that dual lines are \emph{equivalent} if they are both forward and backward equivalent. 

\smallskip
For points of $\kappa$, or along leaves containing $\kappa$, additional difficulties arise because they can be limited on by sequences of rectangles in different quadrants.  For example, for any $p \in \kappa$ and quadrant $Q$ of $p$, 
one can make a sequence $R_i, i \in \mathbb{Z}$ with $\bigcap_i R_i = p$ and such that finite intersections $\bigcap_{-n < i<n} R_i$ lie in $Q$ for all $n>0$; and one can also make sequences where such intersections do not lie in a quadrant, but instead in one connected component of $\mc P \ssm (\FF^v(p))$ or of $\mc P \ssm (\FF^h(p))$.  
Our next goal is to describe how to recover this kind of multiplicity of codings from $\wt V$ and $\wt \Gamma$.  
 
As discussed in \Cref{sec:veering}, points of $\kappa$ correspond to cusps of $\wt V$, and 
given a cusp $c$ of $\wt V$, its induced triangulation is a union of upward and downward ladders.  Each upward ladder contains a unique dual line that we call the \emph{stable cusp line}. See \Cref{fig:tet_ladder}. Similarly, each downward ladder contains a unique dual line that we call the \emph{unstable cusp line}.  (These are called the stable/unstable branch lines in \cite{LMT21} since they correspond to lines of the branching locus of the stable/unstable branched surfaces in $\wt V$, respectively, however we  will not use this perspective here.) 

More generally, any path in $\wt V$ that is positively transverse to the faces of $\wt V$ determines a unique path in $\wt \Gamma$. If $c$ is a cusp of $\wt V$, the dual lines in $\wt \Gamma$ that are determined by bi-infinite positively transverse paths in a (punctured solid torus) neighborhood of $c$ are called the \emph{cusp lines} of $c$.

Put otherwise, the cusp lines of $c$ are exactly the stable/unstable cusp lines along the upward/downward ladders at $c$ (as defined above) together with `hybrid' dual lines consisting of half a stable cusp line and half an unstable cusp lines corresponding to adjacent upward and downward ladders on $c$. 

\begin{remark}
When $\mc P$ is the orbit space of a pseudo-Anosov flow, stable/unstable cusps lines arise from the flow as follows (see \cite[Lemma 3.12]{landry2025transverse}):
each upward ladder intersects a unique singular stable half-leaf along a line, 
and each downward ladder intersects a unique singular unstable half-leaf along a line, and these will match (as transverse paths in $\wt V$) the associated stable/unstable cusp lines.  
\end{remark}

Now, for a dual line $l = \{t_{R_i}\} \in \mc L$, define
\[
\mc F^v(l) = \bigcup_{n\ge 0}\bigcap_{i\ge n} R_i \quad \text{ and } \quad \mc F^h(l) =  \bigcup_{n \le 0}\bigcap_{i\le n} R_i.
\]

\begin{lemma} \label{lem:dominate}
With notation as above,
\begin{itemize}
\item If $l$ is forwards equivalent to a stable cusp line, then $\mc F^v(l)$ is the corresponding stable half-leaf of $\mc F^v$ though $\Phi(l)$. Otherwise, $\mc F^v(l)$ is a stable leaf face through $\Phi(l)$. Similar statements also hold for $\mc F^h(l)$.
\item If $l_2$ forward dominates $l_1$ then $\mc F^v(l_1) \subset \mc F^v(l_2)$. If $l_2$ backwards dominates $l_1$, then $\mc F^h(l_1) \subset \mc F^h(l_2)$
\end{itemize}
\end{lemma}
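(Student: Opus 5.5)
The plan is to handle the second bullet first, since it only uses set-theoretic monotonicity, and then derive the first bullet from \Cref{lem_max} together with the local picture of maximal rectangles near a point of $\kappa$.

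\emph{Domination.} Write $l_1=(t_{R_i})$ and $l_2=(t_{S_j})$. Fix $n$. For every $i\ge n$, forward domination gives some $t_{S_j}\in l_2$ with a directed path from $t_{R_i}$ to $t_{S_j}$. By \Cref{lem_paths}, $S_j$ lies above $R_i$, and as $i\to\infty$ we may take $j\to\infty$. Vertical segments of a rectangle above properly contain vertical segments of the one below. Hence every vertical segment of $\bigcap_{i\ge n}R_i$, which is a vertical segment by \Cref{lem_max}, lies inside the vertical saturation of $S_j$ for all large $j$, and is in fact contained in $\bigcap_{j\ge m}S_j$ after extending vertically. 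Here I would use that $S_j$ above $R_i$ means $\FF^v(S_j)\subset \FF^v(R_i)$ and that the vertical segments of $S_j$ contain those of $R_i$. Taking the union over $n$ gives $\FF^v(l_1)\subset \FF^v(l_2)$. The backward case is symmetric, with horizontal segments.

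\emph{First bullet.} By \Cref{lem_max}, each $\bigcap_{i\ge n}R_i$ is a vertical segment through $\Phi(l)$, and these segments increase with $n$. So $\FF^v(l)$ is a connected subset of the vertical leaf through $\Phi(l)$ containing that point, and I have to identify which subset it is. I would show that its endpoints can only be either ends of the leaf or points of $\kappa$. If an endpoint $e$ of $\FF^v(l)$ were a finite non-$\kappa$ point, the vertical sides of $R_i$ would eventually stay a definite distance past $e$. The reason is that the horizontal sides of $R_i$ must contain $\kappa$-points, and since $\kappa$ is discrete these converge to the leaf. That would extend the intersection segment beyond $e$, a contradiction. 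I expect this yields the ``leaf face'' description: the portion of the vertical leaf through $\Phi(l)$ bounded by the $\kappa$-points it hits, or the whole leaf if it hits none.

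The case where $\Phi(l)$ is itself in $\kappa$ or lies on a singular half-leaf through some $p\in\kappa$ is handled through the upward ladders at $p$. A stable cusp line in an upward ladder $L_r$ consists exactly of the maximal rectangles whose interiors meet the prong $r$, so its $\FF^v$ is the half-leaf $r$. By the domination bullet applied in both directions, forward equivalence gives equality of the sets $\FF^v$. Conversely, if $\FF^v(l)$ is a half-leaf ending at $p$, the rectangles $R_i$ eventually have $p$ on a horizontal side with interiors meeting $r$. They are then eventually tetrahedra in $L_r$, so $l$ is forward equivalent to the stable cusp line, which justifies the ``otherwise''.

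\emph{Main obstacle.} The hardest step is the endpoint analysis in the first bullet: showing that the growing segments exhaust the leaf face exactly. This needs the absence of perfect fits and infinite strips relative to $\kappa$, in the same way as in \Cref{lem_max}. I would model the argument on that proof, extending rectangles horizontally until they hit $\kappa$, and would use discreteness of $\kappa$ to rule out the $\kappa$-points on the horizontal sides drifting away.
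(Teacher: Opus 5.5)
\textbf{There are two gaps.} Your overall skeleton matches the paper's. $\FF^v(l)$ is the increasing union of the segments $\sigma_n=\bigcap_{i\ge n}R_i$ from \Cref{lem_max}. An end of $\FF^v(l)$ can stop only at a $\kappa$-point that eventually lies on the horizontal sides of the $R_i$, and that is the stable cusp line case. The two problems are below.

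\textbf{First gap: identifying the limit in the ``otherwise'' case.} A leaf face is not ``the portion of the leaf bounded by the $\kappa$-points it hits, or the whole leaf if it hits none.'' It is the boundary line of a complementary component of the leaf: the whole leaf if the leaf is nonsingular, and a union of two adjacent prongs if it is singular. For a singular leaf, your description gives either a piece cut off at the singularity or the whole leaf, and both are wrong.

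The missing idea is what happens when a point $p\in\kappa$ (for instance a singularity) lies in the interior of some $\sigma_n$. Interiors of maximal rectangles miss $\kappa$, so $p$ is then interior to a vertical side of every $R_m$ with $m\ge n$, and $\sigma_m$ is that side. Hence the segments pass through $p$ along the two adjacent vertical prongs bounding the quadrants that contain the $R_m$, and they never stop at $p$. This is exactly the situation of an unstable cusp line at $p$: there $\Phi(l)=p$, and $\FF^v(l)$ is the face of $\FF^v(p)$ adjacent to the ladder's horizontal prong. Your description would instead truncate $\FF^v(l)$ at $p$.

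So the dichotomy must be keyed, as in the paper, on whether the \emph{horizontal} sides of the $R_i$ eventually share a $\kappa$-point, not on whether the leaf meets $\kappa$. With that correction, your endpoint argument shows that both ends escape in the ``otherwise'' case and that the limit is a face. That endpoint argument needs only three things: discreteness of $\kappa$, $\kappa$-points on the sides of maximal rectangles, and the narrowing from \Cref{lem_max}. Contrary to your ``main obstacle'' paragraph, no further use of the absence of perfect fits is required.

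\textbf{Second gap: the domination bullet.} One step there does not follow as written. The fact that each vertical segment of $S_j$ properly contains a vertical segment of $R_i$ does not place $\sigma_n$ inside $\FF^v(S_j)$. The rectangle $S_j$ is narrower than $R_i$, and $\FF^v(S_j)\subset\FF^v(R_i)$ is the opposite inclusion.

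The missing line is as follows. By \Cref{lem_max}, $\bigcap_i\FF^v(R_i)$ and $\bigcap_j\FF^v(S_j)$ are single leaves, and domination puts the second inside the first, so they coincide; call this leaf $L$. Then $L$ meets every $S_j$. Pick $j_0$ with a directed path from $t_{R_n}$ to $t_{S_{j_0}}$. By \Cref{lem_paths}, every $S_j$ with $j>j_0$ is then above $R_n$, so the segment $L\cap S_j$ contains $L\cap R_n=\sigma_n$. This gives $\sigma_n\subset\bigcap_{j>j_0}S_j\subset\FF^v(l_2)$.
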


\begin{proof}
From \Cref{lem_max}, we have that intersections of the form $\bigcap_{i\ge 0} R_i$ are vertical segments of $R_0$. Hence, $\mc F^v(l)$ is an increasing union of vertical segments. These segments limit to a leaf face unless some horizontal side of the associated rectangles eventually all contain a single point of $\kappa$. However, this happens if and only if $l$ is forward dominated by a stable cusp line.   This proves the first item. 

The second is a direct consequence of the definition of forward (resp. backwards) dominate and the fact that $\bigcap_{i\ge n} R_i$ are vertical segments  (respectively, that $\bigcap_{i \leq n} R_i$ are horizontal segments). 
\end{proof}

\begin{proposition} \label{th:veering_to_plane}
The map $\Phi$  has the following properties:  
\begin{enumerate}
\item $\Phi$ is $G$--equivariant and surjective.
\item If $\mc L_R \subset \mc L$ is the set of all dual lines that contains the tetrahedron $t_R$, for some maximal rectangle $R$, then $\Phi(\mc L_R) = R$.
\item For each point $x \in \mc P$ not contained in a leaf of a point of $\kappa$, $\Phi^{-1}(x)$ is exactly one class of equivalent dual lines.
\item For each point $x \in \mc P \ssm \kappa$ contained in a stable (resp. unstable) half-leaf at $\kappa$, $\Phi^{-1}(x)$ is exactly one class of dual lines which forward (resp. backward) dominate the corresponding stable (resp. unstable) cusp line 
and are backward (resp. forward) equivalent.
\item For each point $x \in \kappa$, $\Phi^{-1}(x)$ is exactly the set of cusp lines of the cusp $c$ associated to $x$.
\end{enumerate}
\end{proposition}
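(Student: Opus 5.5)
Equivariance in (1) is immediate from the construction: $G$ permutes maximal rectangles compatibly with above/below, so $g\cdot(t_{R_i}) = (t_{gR_i})$ and $\bigcap_i gR_i = g\bigcap_i R_i$. The rest of the proof turns on a tower-building step. Given a maximal rectangle $R$ and a point $x\in R$, we build a dual line through $t_R$ whose image is $x$. We go upward from $t_R$ one face at a time. Of the two tetrahedra glued to the top faces of $t_R$, the corresponding rectangles $R_a,R_b$ together cover every vertical segment of $R$: splitting $R$ along the vertical leaf through the $\kappa$-point on its top (or bottom) side gives the two halves, and each half is extended upward. So at least one of them contains $x$, and we pass to it. Symmetrically we go downward using bottom faces.

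\Cref{lem_paths} shows that this produces a bi-infinite directed path. \Cref{lem_max} shows that the intersection of the rectangles is a single point, and that point must be $x$ because every $R_i$ contains $x$. This gives (2), since the inclusion $\Phi(\mc L_R)\subset R$ is trivial. Surjectivity follows because every point of $\mc P$ lies in some maximal rectangle: take any small rectangle about the point and enlarge it vertically, then horizontally, until each side hits $\kappa$. This enlargement uses the absence of perfect fits and infinite strips relative to $\kappa$.

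For (3)–(5), take $l_1,l_2\in\Phi^{-1}(x)$. The plan is to compare $l_1$ and $l_2$ through the saturations $\FF^v(l_j)$ and $\FF^h(l_j)$ using \Cref{lem:dominate}. Suppose first that $x$ lies on no leaf through $\kappa$. Then $\FF^v(l_1)$ and $\FF^v(l_2)$ are both the full vertical leaf through $x$. Fix a rectangle $R_i$ of $l_1$. Its horizontal sides contain $\kappa$-points, and these lie off the leaf of $x$. So far enough forward along $l_2$, the rectangles $R'_j$ have vertical segments through $x$ that are long enough, and horizontal extent small enough, that $R'_j$ is above $R_i$. The horizontal shrinking uses \Cref{lem_max}; the vertical growth uses the fact that $\FF^v(l_2)$ exhausts the leaf. \Cref{lem_paths} then gives a directed path from $t_{R_i}$ to $t_{R'_j}$. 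The backward direction is symmetric, which gives equivalence. Conversely, equivalent lines have the same image, since the rectangles of each contain rectangles of the other.

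For (4) we run the same argument, except that in the stable direction both $\FF^v(l_j)$ are only the half-leaf at the $\kappa$-point $p$. Here one line need only forward dominate the stable cusp line along that half-leaf. The key observation is that eventually all rectangles of any line in $\Phi^{-1}(x)$ have $p$ on a horizontal side. So they eventually lie in the upward ladder at $p$, and there they dominate the stable cusp line by \Cref{lem_paths}.

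For (5), $x=p\in\kappa$, and every rectangle of $l$ contains $p$ on its boundary, since $p$ cannot lie in the interior of a maximal rectangle. So every tetrahedron of $l$ has a tip at the cusp $c$, and $l$ is a directed path through flat triangles of $c_p$. Such paths are exactly the cusp lines. Conversely, each cusp line has nested rectangles all containing $p$, so by \Cref{lem_max} its image is $p$.

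I expect the main obstacle to be the domination step. This is the claim that two lines with the same image eventually have rectangles strictly above each rectangle of the other, and it needs care near leaves of $\kappa$ to get strict containment of saturations. I would handle it by cases on whether the $\kappa$-points on the sides of $R_i$ lie on the leaves through $x$. I would also need to check that the ``hybrid'' cusp lines are exactly the lines passing between adjacent ladders.
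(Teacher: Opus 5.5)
Your plan follows the paper's proof for (1)–(3) and (5): build towers through the tetrahedra on the top and bottom faces, compare $\FF^v(l)$ and $\FF^h(l)$ via \Cref{lem:dominate} with an ``eventually above'' argument, and use the local picture at a cusp. However, the ``key observation'' you rely on for (4) is false, so that step fails.

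Let $x\in r\ssm\{p\}$, where $r$ is a vertical prong at $p\in\kappa$, and let $h$ be a horizontal prong at $p$ adjacent to $r$. Consider the rectangles of the unstable cusp line of the downward ladder $L_h$. Each has $p$ in the interior of a vertical side. Going forward, these sides exhaust the vertical leaf face through $p$ that contains $r$. So eventually one of these rectangles, call it $R$, has $x$ on that side. Of the two rectangles across the top faces of $t_R$, one keeps this vertical side (it is narrower and taller), and the other does not contain $x$. Hence every upward path from $t_R$ through rectangles containing $x$ stays in $L_h$ forever. Complete it backward by choosing rectangles containing $x$, exactly as in your tower construction. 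This gives $l\in\Phi^{-1}(x)$ none of whose forward rectangles has $p$ on a horizontal side. For this $l$, $\FF^v(l)$ is the full leaf face through $p$ formed by $r$ and the other vertical prong adjacent to $h$ (the whole leaf if $p$ is nonsingular), not the half-leaf $r$. So by \Cref{lem:dominate}, $l$ is not forward equivalent to the stable cusp line of $r$. This is exactly why (4) asserts only forward \emph{domination}. Your argument, which would place every such line eventually in the upward ladder $L_r$, does not cover these lines.

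The repair is the mechanism you already use in (3), which is also the paper's: if $\FF^v(l)$ passes through the interior of a maximal rectangle $R$, then the rectangles of $l$ are eventually above $R$. For any $l\in\Phi^{-1}(x)$, \Cref{lem:dominate} says $\FF^v(l)$ is either $r$ or a leaf face through $x$, and in both cases it contains $r$. Every rectangle of the stable cusp line of $r$ has $p$ in the interior of a horizontal side, with $r$ running through its interior. So $l$ is eventually above each of them, and \Cref{lem_paths} supplies the directed paths.

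Relatedly, your justification of the converse in (3), that ``the rectangles of each contain rectangles of the other,'' is not correct: rectangles related by above/below are not nested. It also does not adapt to the converse in (4). The right reason, which covers both items, is \Cref{lem:dominate}. Domination yields inclusions of the $\FF^v$-saturations, and respectively the $\FF^h$-saturations, of the two lines. So the two images lie on a common vertical leaf and a common horizontal leaf, and these meet in a single point.
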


We note that items $(3)$, $(4)$, and $(5)$ exactly characterize the point preimages of the map $\Phi \colon \mc L \to \mc P$.

\begin{proof}
We begin with the first item. Since 
\[
\Phi(g(t_{R_{i}})) = \Phi((t_{gR_{i}})) = g \left (\bigcap_i R_i \right ),
\]
the map is equivariant. 

To see surjectivity, let $p \in \mc P$ and pick an arbitrary maximal rectangle $R$ containing $p$, possibly in its boundary. (The are many choices here, and we will soon have to reckon with the possibilities.) Set $R_0 = R$. Among the two taller maximal rectangles that share a face with $R$, at least one (though possibly both) contains $p$. Let $R_1$ be such a rectangle. Similarly, let $R_{-1}$ be a 
maximal rectangle below $R$ that shares a face with $R$ and which also contains $p$. Continuing in this way, we produce a dual line $l = (t_{R_i})_{i\in \ZZ}$ so that by construction $\Phi(l) = \bigcap_i R_i = p$.  This also proves item 2. 

For the last three items, first note that if $\ell_1,\ell_2 \in \mc L$ are such that one forward dominates the other and one backwards dominates the other, then $\Phi(l_1) = \Phi(l_2)$, since \Cref{lem:dominate} shows they determine the same (half-)leaves which intersect in a single point. 

If $l_1,l_2 \in \Phi^{-1}(x)$, then all the maximal rectangles along $l_1$ or $l_2$ contain $x$. By \Cref{lem:dominate}, $\mc F^v(l_1)$ and $\mc F^v(l_2)$ are each either stable leaf faces through $x$ or a stable half-leaf of $\mc F^v$, which occurs if and only if the dual line is a stable cusp line. Hence, either $l_1,l_2$ are forward equivalent or they both dominate the associate stable cusp line. Here, we are using that fact that if $\mc F^v(l)$ intersects a maximal rectangle $R$, then maximal rectangles along $l$ are eventually above $R$ in the forward direction. Since the same reasoning holds in the backward direction (using the horizontal/unstable foliation), this establishes items (3) and (4).

Finally, item (5) follows from the picture of maximal rectangles around a point of $\kappa$.
\end{proof}

With the work above, we can now quickly prove 
 \Cref{prop:recovering}
 
\begin{proof}[Proof of \Cref{prop:recovering}]
Suppose $G$ acts on bifoliated planes $\mc P_1$ and $\mc P_2$, preserving sets $\kappa_1, \kappa_2$  such that $\mc P_i$ has no perfect fits or infinite strips relative to $\kappa_i$. Let $\wt V_i$ be the induced veering triangulations, and suppose the induce actions of $G$ on the $\wt V_i$ are conjugate by a simplical homeomorphism that preserves the coorientation on each face.

Let $\Phi_i  \colon \mc L_i \to \mc P_i$ be the $G$-equivariant maps defined in \Cref{cor_defined}.  
Since the actions on the triangulations $\wt V_i$ are conjugate, the associated dual graphs $\wt \Gamma_i$ are naturally isomorphic via an equivariant isomorphism $J\colon \wt \Gamma_1 \to \wt \Gamma_2$.  In particular, $J$ sends (stable/unstable) cusp lines to (stable/unstable) cusp lines and preserves the properties of forward/backward dominating. Hence, items $(3)$, $(4)$, and $(5)$ from \Cref{th:veering_to_plane}, which characterize the point preimages of $\Phi_i$, imply that
two dual lines $l, l'$ in $\wt \Gamma_1$ have the same image under $\Phi_1$ if and only if $J(l)$ and $J(l')$ have the same image under $\Phi_2$. 

From this it follows that $\Phi_1 \circ \Phi_2^{-1} \colon \mc P_2 \to \mc P_1$ is a well-defined, $G$--equivariant bijection.  Moreover, by item $(2)$ of  \Cref{th:veering_to_plane}, it sends maximal rectangles to maximal rectangles. 
Since intersections of interiors of maximal rectangles (or finite unions of maximal rectangles around points of $\kappa$) generate the topology of $\mc P$, this map and its inverse are continuous. Thus $\Phi_1 \circ \Phi_2^{-1} $ is a homeomorphism conjugating the action of $G$. This completes the proof.
\end{proof}

Combining \Cref{prop:veering_th} and \Cref{prop:recovering} now complete the proof of the `if' direction of main theorem (\Cref{th:main}). The `only if' direction was proven in \Cref{prop:only_if}.

\begin{remark}
Using veering triangulations gives another approach to the `only if' direction of \Cref{th:main}, which we proved using Markov partitions.
Indeed, when the closed oriented manifold $M$ supports a transitive pseudo-Anosov flow $\varphi$, then as we saw in \Cref{sec:pfits}, there is a group invariant, closed, discrete set of points $\kappa$ in $\orb_\phi$ such that $\orb_\phi$ has no perfect fits relative to $\kappa$. Then, as in \Cref{sec:veering}, we can consider the set of maximal rectangles whose interiors are disjoint from $\kappa$.  As in the proof of \Cref{prop:its_veering}, this set is finite up to the group action and so we can let $\mc R$ be a finite set of representatives under the action. 

Now it is clear that $\mc R$ satisfies that finite rectangle condition since, just as in \Cref{lem_paths}, for each $R \in \mc R$, $R$ is covered by the maximal rectangles for the two tetrahedra directly above $t_R$ and each of these rectangles is itself above $R$. Similarly, $R$ is covered by the maximal rectangles for the two tetrahedra directed below $t_R$ and these rectangles are below $R$.
\end{remark}

\bibliographystyle{alpha}
\bibliography{orbit.bib}

\end{document}